\newtheorem*{rep@theorem}{\rep@title}
\newcommand{\newreptheorem}[2]{%
\newenvironment{rep#1}[1]{%
 \def\rep@title{#2 \ref{##1}}%
 \begin{rep@theorem}}%
 {\end{rep@theorem}}}
\def\RR{\mathbb{R}}
\def\div{\mathrm {div}}
\def\L{\mathcal L}
\def\tto{\longrightarrow}
\def\pa{\partial}
\def\na{\nabla}
\def\eps{\varepsilon}
\def\vphi{\varphi}
\def\bO{\overline{\Omega}}
\def\dO{\pa\Omega}
\def\Dels{\big(-\Delta\big)^s}
\def\wpsi{\widetilde \psi}
\def\wvphi{\widetilde \vphi}
\def\bF{F_1}
\def\d{\,{\rm{d}}} 
\def\oTe{\overline{T}^\eps}
\def\PV{\mathrm{P.V.}}
\theoremstyle{plain}
\newtheorem{theorem}{Theorem}[section]
\newtheorem{lemma}[theorem]{Lemma}
\newtheorem{proposition}[theorem]{Proposition}
\newtheorem{definition}[theorem]{Definition}
\newtheorem{remark}[theorem]{Remarks}
\newtheorem{corollary}[theorem]{Corollary}
\title{Fractional diffusion limit for a kinetic equation in the upper-half space with diffusive boundary conditions}
\date{}
\author{
L. Cesbron\thanks{Centre de Math\'ematiques Laurent Schwartz,
Ecole Polytechnique, France. Partially supported by a public grant as part of the FMJH}, 
A. Mellet\thanks{Department of Mathematics and CSCAMM, University of Maryland,  USA. Partially supported by NSF Grant DMS-1501067.}, 
M. Puel\thanks{Laboratoire J.-A. Dieudonn\'e,
Universit\'e de Nice Sophia-Antipolis, France.}
} 
\begin{document}
\maketitle

\begin{abstract}
We investigate the fractional diffusion approximation of a kinetic equation set in the upper-half space with diffusive reflection conditions at the boundary.
In an appropriate singular limit corresponding to small Knudsen number and long time asymptotic, we derive a fractional diffusion equation with a nonlocal  Neumann boundary condition for the density of particles. Interestingly,
this asymptotic equation is different from the one derived by L. Cesbron in \cite{Cesbron18} in the case of specular reflection conditions at the boundary and does not seem to have received a lot of attention previously.
\end{abstract}

\tableofcontents

\section{Introduction}

The purpose of this paper is to investigate the fractional diffusion approximation of a linear kinetic equation set on a bounded domain with diffusive boundary conditions. The starting point of our analysis is the following linear Boltzmann equation
\begin{equation} \label{eq:kin0}
\left\{ \begin{aligned}
& \pa_t f +  v\cdot \na_x f = Q(f) \qquad &\mbox { for } (t,x,v)\in(0,\infty)\times\Omega\times\RR^N \\
& f (0,x,v) = f_{in} (x,v) &\mbox{ for } (x,v)\in\Omega\times\RR^N
\end{aligned} \right. 
\end{equation}
where $\Omega$ is a subset of $\RR^N$ and $Q$ is the linear Boltzmann Operator
\begin{align*} 
Q(f) (v)&  = \int_{\RR^N} \sigma(v,w)\big[ F(v) f(w) - F(w) f(v)\big]  \d w\\
& = K(f)(v)-\nu(v) f(v).
\end{align*}
Throughout this paper, the thermodynamical equilibrium $F(v) = \tilde F(|v|^2)\geq 0$ will be a normalized heavy-tail distribution function satisfying
\begin{equation}\label{eq:F0}
\int_{\RR^N} F(v)\d v=1,\qquad  F(v)\sim \frac{\gamma}{|v|^{N+2s}} \text{ as } |v|\to\infty, \qquad s\in (1/2,1)
\end{equation}
and, to avoid unnecessarily complicated notations in the proof, we will assume that the cross section $\sigma(v,w)$ is constant equal to $\nu_0$ throughout the rigorous part of the paper.
However, the result holds without modifications if we assume instead that $\sigma(v,w)$ is 
 bounded above and below and symmetric:
$$ 0<\sigma_0 \leq \sigma(v,w) \leq \sigma_1 , \qquad \sigma(v,w)=\sigma(w,v) \quad\mbox{ for all } (v,w)\in\RR^N\times\RR^N$$
and if  the collision frequency $\displaystyle \nu (v)= \int_{\RR^N} \sigma(v,w) F(w) \d w $ satisfies  $\nu(v)\to \nu_0$ as $|v|\to\infty$.

This kinetic equation models the evolution of a particle distribution function $f(t,x,v)\geq 0$ depending on the time $t>0$, the position $x\in\Omega$ and the velocity $v\in\RR^N$. The left hand side of \eqref{eq:kin0} models the free transport of particles, whereas the operator $Q$ in the right hand side models the diffusive and mass preserving interactions between the particles and the background. 

The equation must be supplemented by boundary conditions on $\pa\Omega$.
In this paper, we consider {\bf diffusive reflection conditions}, which can be written as: 
\begin{equation} \label{eq:BCdiff}
\gamma_-f(t,x,v) = \mathcal B [\gamma_+ f] (t,x,v) \qquad \forall (x,v)\in \Sigma_-
\end{equation}
where $\gamma_\pm f$ is the restriction of the trace $\gamma f$ on $\Sigma_{\pm} := \lbrace (x,v)\in\dO\times\RR, \, \pm n(x) \cdot v >0\rbrace$ with $n(x)$ the outward unit normal vector. 
To avoid the need of boundary layer analysis, we assume that the boundary operator $\mathcal{B}$ takes the form
\begin{equation}\label{eq:B}
\mathcal B [g](x,v):= 
\alpha_0 F(v) \int_{w\cdot n(x)>0} g (x,w) |w\cdot n(x)| \d w \qquad \forall (x,v)\in \Sigma_-
\end{equation}
with the same $F(v)$ as in \eqref{eq:F0} and with $\alpha_0$ a normalization constant chosen such that
$$ \alpha_0 \int_{v\cdot n<0} |v\cdot n|  F(v)\d v = 1$$
for any unit vector $n$ (this integral is well defined since $F(v)\sim \frac 1 {|v|^{N+2s}}$ and $ s > 1/2$).

The diffusion approximation of such an equation is obtained by investigating the long time, small mean-free-path asymptotic behavior of $f$. To this end we introduce the Knudsen number $\eps$ and the following rescaling of \eqref{eq:kin0}-\eqref{eq:BCdiff}
\begin{equation}\label{eq:kin}
\left\{ \begin{aligned}
& \eps^{2s-1} \pa_t f^\eps+  v\cdot \na_x f^\eps = \frac{1}{\eps} Q(f^\eps) \qquad &\mbox{ for } (t,x,v)\in(0,\infty)\times\Omega\times\RR^N, \\
& f^\eps(0,x,v) = f_{in} (x,v) &\mbox{ for } (x,v)\in\Omega\times\RR^N,\\[5pt]
&\gamma_-f^\eps(t,x,v) = \mathcal B [\gamma_+ f^\eps] (t,x,v) & \mbox{ for } (t,x,v) \in (0,\infty)\times\Sigma_-.
\end{aligned} \right. 
\end{equation}
We see that the particular choice of power of $\eps$ in front of the time derivative in \eqref{eq:kin} depends on the equilibrium $F$. The correct scaling was established in \cite{MelletMischlerMouhot11} (see also \cite{AcevesMellet,Mellet10,AbdallahMelletPuel11}) where it was shown that if $\Omega$ is the whole space $\RR^N$ then the solution $f^\eps$ of \eqref{eq:kin} converges, as $\eps$ goes to zero, to a function
\begin{align*}
f^0(t,x,v)= \rho(t,x)F(v) \in \ker Q 
\end{align*}
where $\rho(t,x)$ solves the following fractional diffusion equation:
$$
\left\{ \begin{aligned}
& \pa_t \rho + \kappa \Dels \rho = 0 \qquad &\mbox{ for } (t,x)\in (0,+\infty)\times\RR^N ,\\
& \rho(0,x) = \rho_{in}(x) = \int_{\RR^N} f_{in}(x,v) \d v &\mbox{ for } x\in\RR^N
\end{aligned} \right. 
$$
for some $\kappa>0$.
Recall that the fractional Laplacian $\Dels$ is a non-local integro-differential operator which can be defined through its Fourier transform:
\begin{align*}
\mathcal{F} \left( \Dels \rho \right) (\xi) :=  |\xi|^{2s} \mathcal{F} \left( \rho \right) (\xi) 
\end{align*}
or equivalently as a singular integral
\begin{align*}
\Dels \rho(x) = c_{N,s} \PV \int_{\RR^N} \frac{\rho(x)-\rho(y)}{|x-y|^{N+2s}} \d y 
\end{align*}
where $c_{N,s}$ is an explicit constant, see e.g. \cite{DiNezzaPalatucciValdinoci12,Kwasnicki15} for more details. 
\medskip 

In this paper, though, the equation is set in a  subset $\Omega$ of $\RR^N$. So we expect to derive a fractional diffusion equation confined to the domain $\Omega$. 
The question at the heart of this paper is to determine the appropriate boundary conditions for this asymptotic equation.
When the thermodynamical equilibrium $F$ is a Gaussian (or Maxwellian) distribution then it is well known that the diffusion limit of \eqref{eq:kin}, with $s=1$, leads to the classical heat equation 
supplemented with 
homogeneous Neumann boundary conditions.
Interestingly, these boundary conditions are not very  sensitive to the type of microscopic boundary conditions. In particular, if  instead of \eqref{eq:BCdiff}, we supplement equation \eqref{eq:kin0} with specular reflection conditions \eqref{eq:SR}, or with a combination of diffuse and specular reflections (Maxwell boundary conditions), the limiting boundary conditions are the same homogeneous Neumann boundary conditions mentioned above.
\medskip

However, the issue of boundary condition is much more delicate with nonlocal operators such as  fractional Laplacians. 
Indeed, these operators are classically associated with alpha-stable L\'evy processes (or jump processes). Unlike the Brownian motion, these processes are discontinuous and may exit the domain without touching the boundary.
This is the reason why the usual Dirichlet problem for the fractional Laplacian requires a prescribed data in $\RR^N\setminus \overline \Omega$ rather than just on the boundary $\pa \Omega$ \cite{FelsingerKassmannVoigt15}.
Neumann boundary value problems correspond to processes that are not allowed to jump outside $\Omega$ (sometimes referred to as censored stable processes).
Several constructions of such processes are possible. 
A classical construction consist in cancelling the process after any outside jump and restarting  it  at the last position inside the set (resurrected processes).
This construction (see \cite{BogdanBurdzyChen03,GuanMa05,GuanMa06} for details) leads to the {\it regional fractional laplacian} defined by
\begin{align*}
\Dels_\Omega \rho(x) = c_{N,s} \PV \int_{\Omega} \frac{\rho(x)-\rho(y)}{|x-y|^{N+2s}} \d y 
\end{align*}
However, other constructions of censored processes are possible.
Because of the nonlocal nature of the problem, the choice of boundary conditions for the underlying process typically changes the operator inside the domain.
In \cite{Barles14}, several such operators are discussed.  
For instance the process that reaches a position $y\notin \Omega$ 
can be restarted inside $\Omega$ by projecting $y$ onto $\pa\Omega$, or by reflecting $y$ about $\pa\Omega$ (see discussion below).
In \cite{DiPierroRosotonValdinoci17} a different Neumann problem is obtained by restarting the process from a point $x\in \Omega$ chosen randomly with probability proportional to $|x-y|^{-N-2s}$.

\medskip

In a recent paper \cite{Cesbron18}, L. Cesbron  studied 
the derivation of fractional diffusion approximation from a kinetic model  in a bounded domain 
with 
{\bf specular reflection} at the boundary.
These conditions read:
\begin{align} \label{eq:SR}
\gamma_- f^\eps (t,x,v) = \gamma_+ f^\eps ( t,x, \mathcal{R}_x v), \quad \mathcal{R}_x (v) = v - 2 \big( n(x) \cdot v\big) n(x), \quad (t,x,v) \in (0,T)\times\Sigma_-.
\end{align}
In that case, the asymptotic 
equation reads
$$
\left\{ \begin{aligned}
&\pa_t \rho + (-\Delta)_{\rm{\tiny{SR}}}^s \rho = 0 \quad & \mbox{ for } (t,x)\in (0,+\infty)\times\Omega \\
&\rho(0,x) = \rho_{in}(x) &\mbox{ for } x\in \Omega
\end{aligned} \right. 
$$
where 
\begin{equation} \label{def:DelsSR}
(-\Delta)_{\text{\rm{SR}}}^s\rho(x) = c_{N,s} \PV \underset{\RR^N}{\int} \frac{\rho(x) - \rho\big(\eta(x,w)\big)}{|w|^{N+2s}} \d w
\end{equation}
where $\eta: \Omega\times\RR^N \to \bO$ is the flow of the free transport equation with specular reflection on the boundary. When $\Omega$ is the upper-half space, we simply have
$$\eta(x,w) = \begin{cases}
x+w & \mbox{ if } x_N+w_N>0 \\
(x'+w',-x_N-w_N)& \mbox{ if } x_N+w_N<0 
\end{cases}
$$
and the underlying alpha stable process is the process which is moved back inside $\Omega$ by a mirror reflection about the boundary $\pa\Omega$ upon leaving the domain (see \cite{Cesbron18,Barles14}).

\medskip

Our main result in this paper states that when the boundary conditions at the microscopic level
are given by \eqref{eq:BCdiff}, then the asymptotic operator is
$$ \L  [\rho]  = - c_{s,N}\PV\int_\Omega \na \rho(y) \cdot\frac{y-x}{|x-y|^{N+2s}}\, dy$$
which is neither the regional fractional Laplacian, nor the operator \eqref{def:DelsSR} (see \eqref{eq:opL3} for the precise relation between $ \L$ and $ (-\Delta)^s_\Omega$).
Furthermore, this operator can be written in divergence form as $\div D^{2s-1}[\rho]$
where $D^{2s-1}[\rho]$ is a nonlocal gradient of order $2s-1$ (see \eqref{eq:D0}), and the fractional diffusion equation must be supplemented by the following Neumann type condition
$$  D^{2s-1}[\rho] \cdot n=0 \qquad \mbox{ on } \pa\Omega$$
(see \eqref{eq:fracNeumann}).
Note that while the operator $D^{2s-1}$ is non local, the boundary condition itself is only assumed to hold on the boundary $\pa\Omega$.
This is thus different from the Nonlocal Neumann problem studied in \cite{DiPierroRosotonValdinoci17}, where the Neumann condition is set in $\RR^N\setminus \Omega$.

The main takeaway from this paper is thus that 
for the fractional diffusion approximation, the limiting operator is very sensitive to the particular choice of microscopic boundary conditions.
Note also that unlike \eqref{eq:SR} where the interaction with the boundary is entirely included in the diffusion operator, here the diffusive boundary condition \eqref{eq:BCdiff} gives rise to the boundary condition above. This can be seen as a result of the difference in nature of the kinetic boundary conditions: the local-in-velocity specular reflection vs. non-local-in-velocity diffusive condition. 
\medskip

The goal of this paper is to formally explain the derivation of the asymptotic equations for \eqref{eq:kin} in convex subsets of $\RR^N$ and to rigorously prove this derivation  when $\Omega$ is the upper half-space.

\subsection{Main results and outline of the paper}

The existence of solutions to equation \eqref{eq:kin} is a delicate problem because it is difficult to control the trace $\gamma_+ f$ in an appropriate functional space (see \cite{Mischler1,Mischler2}).
Note that for a  given test function  $\phi\in\mathcal D ([0,\infty)\times\overline\Omega\times\RR^N)$,  
a smooth solution of \eqref{eq:kin} will satisfy
\begin{align*}
& - \underset{\RR^+\times\Omega\times\RR^N}{\iiint} f^\eps \Big(  \pa_t \phi +\eps^{1-2s} v \cdot \na_x\phi \Big) \d v \d x \d t \\
&+ \eps^{1-2s} \underset{\RR^+\times \Sigma_+}{\iint} \gamma_+ f^\eps  \bigg( \gamma_+ \phi - \mathcal B ^*[\gamma_- \phi] \bigg) |v\cdot n|\d v\d S(x) \d t\\
& =\eps^{-2s}\underset{\RR^+\times\Omega\times\RR^N}{\iiint} f^\eps Q^*(\phi) \d v\d x\d t+ \underset{\Omega\times\RR}{\iint} f_{in} (x,v) \phi(0,x,v) \d  x \d v.
\end{align*}
where
$$ 
\mathcal B ^* [\gamma_- \phi ] (x,v)= \int_{w\cdot n(x)<0} \alpha_0 F(w) \gamma_-\phi(w) |w\cdot n(x)|\d w.$$

A classical way of defining weak solutions of \eqref{eq:kin}, \eqref{eq:BCdiff} without having to deal with the trace $\gamma f$ is then the following (see for instance \cite{MelletVasseur}):
\begin{definition}\label{def:weak}
We say that a function $f(t,x,v)$ in $L^2_{F^{-1}}((0,\infty)\times \Omega\times\RR^N)$ is a weak solution of \eqref{eq:kin}  if for every test functions  $\phi(t,x,v)$ such that 
$\phi$, $\pa_t \phi$ and $v\cdot \na_x \phi$ are $L^2_{F}((0,\infty)\times \Omega\times\RR^N)$
and satisfying the boundary condition
$$\gamma_+ \phi = \mathcal B ^*[\gamma_- \phi] ,$$
the following equality holds:
\begin{align}
&- \underset{\RR^+\times\Omega\times\RR^N}{\iiint} f^\eps \Big(  \pa_t \phi +\eps^{1-2s} v \cdot \na_x\phi \Big) \d v\d x\d t \nonumber \\
& \qquad\quad =\eps^{-2s}\underset{\RR^+\times\Omega\times\RR^N}{\iiint} f^\eps Q^*(\phi) \d v\d x\d t + \underset{\Omega\times\RR}{\iint} f_{in} (x,v) \phi(0,x,v) \d x\d v.\label{eq:weak0}
\end{align}
\end{definition}
Here and in the rest of the paper, we used the notation
$$ L^2_{F^{-1}} ((0,\infty)\times \Omega\times\RR^N)= \left\{f(t,x,v) \, ;\, \int_0^\infty\int_\Omega\int_{\RR^N} |f(t,x,v)|^2 \frac{1}{F(v)}\, dv\, dx\, dt<\infty\right\}$$
and a similar definition for $L^2_{F}((0,\infty)\times \Omega\times\RR^N)$.
\medskip

In order to write our main  result, we now define the operator 
\begin{equation}\label{eq:D0} 
D^{2s-1}[u](x) = \gamma \nu_0^{1-2s} \Gamma(2s-1) \int_\Omega (y-x)\cdot \na u(y)  \frac{y-x}{|y-x|^{N+2s}} \, dy
\end{equation}
which is defined pointwise for example if $\na u \in L^\infty_{loc}(\Omega)\cap L^1(\Omega)$ 
(note that we included the constant $\gamma \nu_0^{1-2s} $ which depends $F$ and $\nu$ in this definition in order to simplify the notations later on). In particular, if $N=1$ and $\Omega=\RR$, we find
\begin{align*}D^{2s-1}[u](x) & = \gamma \nu_0^{1-2s} \Gamma(2s-1) \int_\RR \frac{u'(y)}{|x-y|^{N-2(1-s)}}  \, dy\\
& = \gamma \nu_0^{1-2s}  c (-\Delta)^{-(1-s)} u'(x),
\end{align*}
for some constant $c$.
So the operator $D^{2s-1}$ can be interpreted as a fractional   gradient of order $2s-1\in(0,1)$

Our main result is then the following:
\begin{theorem}\label{thm:main}
Assume that $Q$ is given by \eqref{eq:Q0} 
 and that $F$ satisfies \eqref{eq:F} with $s\in(1/2,1)$. Let $\Omega$ be the upper half space 
$$ \Omega = \{x\in \RR^N\,;\, x_N >0\}.$$
Assume that $f^\eps(t,x,v)$ is a weak solution of \eqref{eq:kin} in $(0,\infty)\times \Omega\times\RR^N$
in the sense of Definition~\ref{def:weak} and satisfies the energy inequality \eqref{eq:energy}. 
Then, up a subsequence, the function $f^\eps(t,x,v)$ converges weakly in $L^\infty(0,\infty; L^2_{F^{-1}} (\Omega\times\RR^N))$, as $\eps$ goes to $0$, to a function $\rho(t,x) F(v)$ where $\rho(t,x)$ satisfies
\begin{align} \label{eq:weakFracNeumann}
\underset{\RR^+\times\Omega} {\iiint} &\rho(t,x) \Big(  \pa_t \psi(t,x) +\div D^{2s-1} [\psi](t,x) \Big) \d t\d x + \underset{\Omega}{\iint} \rho_{in} (x) \psi(0,x) \d x =0 
\end{align}
for all test function $\psi\in W^{1,\infty}(0,\infty;H^2(\Omega))$, such that $ \div D^{2s-1} [\psi] \in L^{2}(\RR_+\times\Omega)$ and  
\begin{equation}\label{eq:psineumann}
 D^{2s-1} [\psi] \cdot n =0 
\end{equation}
\end{theorem}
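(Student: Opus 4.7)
The plan is to use a duality approach: given any admissible test function $\psi$ for \eqref{eq:weakFracNeumann}, construct an admissible test function $\phi^\eps$ for the kinetic weak formulation of Definition~\ref{def:weak} by solving an auxiliary stationary problem, and then pass to the limit in \eqref{eq:weak0}. The auxiliary problem is designed so that the heavy-tail scaling \eqref{eq:F0} combined with integration along backward characteristics reproduces, at leading order, the nonlocal operator $\div D^{2s-1}[\psi]$. As a preliminary step, I would use the energy inequality \eqref{eq:energy} to extract a uniform bound on $f^\eps$ in $L^\infty(0,\infty;L^2_{F^{-1}})$ together with a dissipation estimate of the form $\|f^\eps-\rho^\eps F\|_{L^2_{F^{-1}}}\lesssim \eps^{s}$, where $\rho^\eps(t,x)=\int f^\eps \d v$. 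Weak-$*$ compactness then gives, up to extraction, $f^\eps \rightharpoonup \rho(t,x)F(v)$, and the relaxation estimate shows that the weak limit lies in $\ker Q$.

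For the construction of $\phi^\eps$, I would solve the stationary dual kinetic equation
\begin{equation*}
\nu_0\, \chi^\eps + \eps\, v\cdot\na_x\chi^\eps \;=\; \nu_0\, \psi \quad\text{in }\Omega\times\RR^N,
\end{equation*}
with the boundary condition $\gamma_+\chi^\eps = \mathcal{B}^*[\gamma_-\chi^\eps]$ on $\Sigma_+$, so that $\phi^\eps:=\chi^\eps$ is admissible in the sense of Definition~\ref{def:weak}. Since $\Omega$ is the upper half-space, this equation can be integrated along the backward straight lines $\tau\mapsto x-\eps v\tau$ up to their possible exit time $\tau_\eps = x_N/(\eps v_N)$ through $\{x_N=0\}$, producing an explicit representation formula involving the diffusive reflection on $\Sigma_-$. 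Averaging against $F(v)$, applying the change of variables $y=x-\eps v\tau$, and using the heavy-tail asymptotic $F(v)\sim\gamma|v|^{-N-2s}$ together with the temporal integrals $\int_0^\infty e^{-\nu_0\tau}\tau^{\alpha}\d\tau = \Gamma(\alpha+1)/\nu_0^{\alpha+1}$, I would obtain the key asymptotic identity
\begin{equation*}
\overline{\chi^\eps}(x) - \psi(x) \;=\; \eps^{2s}\,\nu_0^{-1}\,\div D^{2s-1}[\psi](x) \,+\, o(\eps^{2s}),
\end{equation*}
in which the constant $\gamma\nu_0^{1-2s}\Gamma(2s-1)$ appearing in \eqref{eq:D0} arises naturally.

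Inserting $\phi^\eps=\chi^\eps$ into \eqref{eq:weak0}, the identities $Q^*(\chi^\eps)=\nu_0[\overline{\chi^\eps}-\chi^\eps]$ and $\nu_0[\psi-\chi^\eps]=\eps\, v\cdot\na_x\chi^\eps$ (the latter being just the dual equation) let me combine the transport and collision contributions into $\eps^{-2s}\nu_0[\overline{\chi^\eps}-\psi]$, which by the previous step converges to $\div D^{2s-1}[\psi]$ when tested against $\rho^\eps F$; the remainder coming from $f^\eps-\rho^\eps F$ is controlled by the $\eps^s$ dissipation estimate. The main obstacle, and the step where the geometry of $\Omega$ enters most delicately, is the boundary analysis: the backward characteristics with $v_N>0$ hit $\{x_N=0\}$ and pick up the diffusive reflection $\mathcal{B}^*$, generating a surface contribution whose limit is essentially a boundary integral of the form $\int_{\pa\Omega} \rho\, D^{2s-1}[\psi]\cdot n\, \d S$. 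The hypothesis \eqref{eq:psineumann} is exactly what is needed to kill this term. This is also where the structural contrast with the specular case \cite{Cesbron18} becomes transparent: the non-locality in velocity of $\mathcal{B}$ prevents the boundary interaction from being absorbed into a modified interior operator (as it is for specular reflection, yielding \eqref{def:DelsSR}), and instead produces the non-local Neumann condition $D^{2s-1}[\psi]\cdot n=0$ in the limit.
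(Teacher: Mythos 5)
Your overall strategy --- a duality argument built on a dual transport problem, combined with the dissipation estimate $\|f^\eps-\rho^\eps F\|\lesssim\eps^{s}$ from \eqref{eq:energy} --- is exactly the route the paper takes. But there are two substantive problems, one small and one large.

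\textbf{Sign of the dual equation.} You pose the auxiliary problem with $+\eps\,v\cdot\na_x$, i.e.\ $\nu_0\chi^\eps + \eps v\cdot\na_x\chi^\eps = \nu_0\psi$, and then claim $Q^*(\chi^\eps) + \eps v\cdot\na_x\chi^\eps = \nu_0[\overline{\chi^\eps}-\psi]$. With your sign this gives $\nu_0[\overline{\chi^\eps}+\psi - 2\chi^\eps]$, which does not simplify. You need the opposite sign, $\nu_0\chi^\eps - \eps v\cdot\na_x\chi^\eps = \nu_0\psi$, so that $\eps v\cdot\na_x\chi^\eps = \nu_0(\chi^\eps - \psi)$ and $Q^*(\chi^\eps) + \eps v\cdot\na_x\chi^\eps = \nu_0[\overline{\chi^\eps}-\chi^\eps]+\nu_0[\chi^\eps-\psi] = \nu_0[\overline{\chi^\eps}-\psi]$. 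This also changes which characteristics exit: with the correct sign one integrates forward along $x+\eps v\tau$, so only $v_N<0$ exits --- consistent with the paper's extension $\wpsi$.

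\textbf{The real gap.} You want to impose the exact boundary condition $\gamma_+\chi^\eps = \mathcal B^*[\gamma_-\chi^\eps]$ directly, rather than the paper's route of imposing the simpler condition $\gamma_+\phi^\eps = \psi$ and then appending the ad hoc corrector $T^\eps(t,x)\chi(v)$ of \eqref{eq:newphi}. This is a genuinely different implementation, and in the half-space it is actually feasible: $\gamma_-\chi^\eps(x',0,w)$ for $w_N>0$ is given explicitly by the characteristic integral to $+\infty$, so $\mathcal B^*[\gamma_-\chi^\eps]$ is explicit and $v$-independent, and the $\mathcal B^*$-problem is not a fixed-point problem here. That said, the difference between your $\chi^\eps$ and the paper's $\phi^\eps$ is precisely $\eps^{2s-1}\alpha_0\big(D^{2s-1}_\eps[\psi](\hat x)\cdot n\big)\,e^{-\nu_0 \tau^*(x,v)}$ carried along the exit characteristics for $v_N<0$ --- i.e.\ your approach does not \emph{eliminate} the boundary corrector, it carries the same quantity $\oTe=D^{2s-1}_\eps[\psi]\cdot n\big|_{\pa\Omega}$ in a different functional form. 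Its smallness is not free: it is exactly the content of Lemma~\ref{lem:corrector}, which uses \eqref{eq:psineumann} to trade $D^{2s-1}_\eps[\psi]\cdot n$ for $\big(D^{2s-1}_\eps - D^{2s-1}\big)[\psi]\cdot n$ and then estimates the error by splitting the velocity integral at scale $\eps^\alpha$ with $\alpha = 2s/(1+2s)$. None of this is addressed in your proposal.

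More seriously, the ``key asymptotic identity'' $\overline{\chi^\eps}(x)-\psi(x)=\eps^{2s}\nu_0^{-1}\div D^{2s-1}[\psi](x)+o(\eps^{2s})$ is asserted with no argument, and as a pointwise statement it is simply false near $\pa\Omega$: for a generic $\psi\in H^2(\Omega)$ the limit operator $\L[\psi]=\div D^{2s-1}[\psi]$ fails to be in $L^\infty$ or even $L^2$ near the boundary (Propositions~\ref{prop:Linfbd} and \ref{prop:L2bd}), so any pointwise expansion must degenerate there. What one actually proves is the $L^2$-convergence $\L^\eps[\psi]\to\L[\psi]$ of Proposition~\ref{prop:Lstrong}, and its proof is the technical heart of the paper: after splitting the velocity integral at scale $\eps^\alpha$, the dangerous boundary-layer contribution is
\begin{equation*}
\eps^{-2(1-\alpha)(2s-1)}\iint_{\RR_+\times\Omega}\big|\pa_{x_N}\psi(t,x)\big|^2\,1_{\{x_N\leq\eps^{1-\alpha}\}}\,\d t\,\d x,
\end{equation*}
which is controlled \emph{not} by the nonlocal Neumann condition \eqref{eq:psineumann} but by the separate hypothesis $\L[\psi]\in L^2(\RR_+\times\Omega)$ via the Hardy-type Corollary~\ref{cor:15}. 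Relatedly, your description of the boundary mechanism as ``a surface contribution $\int_{\pa\Omega}\rho\,D^{2s-1}[\psi]\cdot n\,\d S$ killed by \eqref{eq:psineumann}'' matches the paper's \emph{formal} discussion in Section~\ref{sec:formal} but not its rigorous proof: no such boundary integral appears in the argument. The Neumann condition \eqref{eq:psineumann} enters through the corrector estimate, and the hypothesis $\L[\psi]\in L^2$ enters through the $\L^\eps\to\L$ convergence; conflating the two hides where each assumption is actually used.

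Finally, two minor points: since $Q$ is given by \eqref{eq:Q0}, the remainder $\eps^{-2s}\iiint K(g^\eps)(\phi^\eps-\psi)$ vanishes identically ($K(g^\eps)=\nu_0\int g^\eps\,\d v=0$), so the $\eps^s$ dissipation estimate is used only to bound the corrector term involving $Q^*[\chi(v)]$, not the main remainder as your sketch suggests. And the extraction of the weak limit $f^\eps\rightharpoonup\rho F$ and its identification with $\ker Q$ is indeed as you say.
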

We now make several remarks concerning this result:
\begin{enumerate}
\item As mentioned in the introduction, the result holds for more general collision operators $Q$. We restrict ourselves to the simplest case here in order to focus on the novelty of our analysis, which is to deal with the boundary conditions. 
\item Equation \eqref{eq:weakFracNeumann} is the fractional equivalent of the following  weak formulation of the usual heat equation with Neumann boundary conditions:
$$
\begin{cases}
\displaystyle \underset{\RR^+\times\Omega} {\iiint} \rho \Big(  \pa_t \psi(t,x) +\Delta  \psi (t,x) \Big) \d t\d x + \underset{\Omega}{\iint} \rho_{in} (x) \psi(0,x) \d x =0  \\
\displaystyle \mbox{for all } \psi \in W^{1,\infty}(0,\infty;H^2(\Omega)) \mbox{ such that } \na \psi(x) \cdot n(x)=0 \mbox{ on } \pa\Omega.
\end{cases}
$$
In particular, condition \eqref{eq:psineumann} is  the nonlocal equivalent of this classical Neumann boundary condition.


\item Using the following integration by parts formula (which we will prove in Proposition \ref{prop:ipp}):
$$
\int_{\Omega} \div D^{2s-1}[\vphi] \psi \d x - \int_{\Omega} \vphi \, \div D^{2s-1} [\psi]\d x  = \int_{\pa \Omega} \left[\psi D^{2s-1}[\vphi]\cdot n -
 \vphi D^{2s-1}[\psi]\cdot n \right]\d S(x)
$$
we see that Equation \eqref{eq:weakFracNeumann} is the weak formulation for 
 the following fractional Neumann boundary problem: 
\begin{equation} \label{eq:fracNeumann}
\left\{ \begin{aligned}
& \pa_t \rho - \div D^{2s-1} [\rho] = 0 \qquad \mbox{ in } (0,\infty)\times \Omega\\
& D^{2s-1} [\rho] \cdot n = 0 \qquad \mbox{ in } (0,\infty)\times\pa  \Omega\\
& \rho(0,x) = \rho_{in}(x) \qquad \mbox{ in } \Omega.
\end{aligned} \right. 
\end{equation}

\item We need to require that $ \div D^{2s-1} [\psi] \in L^{2}(\RR_+\times\Omega)$ in Theorem \ref{thm:main},
because such a fact is not implied by the condition $\psi\in W^{1,\infty}(0,\infty;H^2(\Omega))$ (which might seem surprising if one thinks of $\div D^{2s-1}$ as a Laplacian of order $s\in(1/2,1)$).
We will characterize precisely in Proposition \ref{prop:L2bd} the functions such that $ \div D^{2s-1} [\psi] \in L^{2}(\RR_+\times\Omega)$ and in particular, we will prove that  when $s\geq 3/4$, this condition requires $\psi$ to  satisfy the local Neumann boundary condition $\na \psi \cdot n = 0 $ on $\pa\Omega$. 
This suggests that solutions of \eqref{eq:fracNeumann} also satisfy the classical Neumann boundary conditions at the boundary, though this fact emerges as a consequence of the regularity theory, rather than as a boundary condition necessary to get a unique solution.

\item As explained in the first part of this introduction, we will show that the main operator in \eqref{eq:fracNeumann}  is 
\begin{equation}\label{eq:opL2}
\L[\rho](x)  : = \gamma \nu_0^{1-2s}\Gamma(2s)\PV \int_{\Omega}  \na \rho (y) \cdot  \frac{y-x}{|y-x|^{N+2s}}    \d y .
 \end{equation}
Indeed, taking the divergence in \eqref{eq:D0}, we obtain (formally at least)
$$
\div D^{2s-1}[\rho]= \mathcal L[\rho] .$$
We will rigorously justify this formula later on, see Lemma~\ref{lem:opL2}.
We see in particular that when $\Omega=\RR^N$, we recover the usual fractional Laplacian of order $s$ in $\RR^N$ (up to a constant).
\end{enumerate}

\medskip
\medskip

Because equation \eqref{eq:fracNeumann}
does not seem to have been studied in details before, we will prove the following theorem:
\begin{theorem}\label{thm:evolution}
For all $\rho_{in}\in L^2(\Omega)$, the evolution problem
\begin{equation}\label{eq:evolution}
\left\{
\begin{array}{ll}
\pa_t \rho - \div D^{2s-1}[\rho]=0 & \mbox{ in } (0,\infty)\times \Omega,\\
D^{2s-1}[\rho]\cdot n = 0 & \mbox{ on } (0,\infty)\times\pa\Omega, \\
\rho(0,x) = \rho_{in}(x) & \mbox{ in } \Omega.
\end{array}
\right.
\end{equation}
has a unique  solution $\rho \in C^0(0,\infty;L^2(\Omega))\cap L^2(0,\infty;D(\mathcal L))$ where
$$
D(\mathcal L)=\{\vphi \in H^s(\Omega)\,;\, \mathcal L[\vphi] \in L^2(\Omega), \quad D^{2s-1}[\vphi]\cdot n=0\mbox{ on } \pa\Omega \}.
$$
\end{theorem}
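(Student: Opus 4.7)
The plan is to realise $\mathcal L = -\operatorname{div} D^{2s-1}$ as a nonnegative self-adjoint operator on $L^2(\Omega)$ via an associated symmetric closed bilinear form, and then invoke classical Hilbert-space semigroup theory (equivalently, J.-L. Lions' theorem on parabolic problems for continuous coercive forms) to get a unique semigroup. The Neumann-type condition $D^{2s-1}[\rho]\cdot n=0$ should emerge naturally from the form domain, in complete analogy with the way the classical Neumann Laplacian arises from the unconstrained Dirichlet form $\int_\Omega \nabla u\cdot\nabla v\,dx$ on $H^1(\Omega)$.

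Concretely, using the integration-by-parts identity of Proposition~\ref{prop:ipp} I would first define the bilinear form
\[
\mathcal E(u,v) := \int_\Omega D^{2s-1}[u]\cdot \nabla v\,dx
\]
on smooth functions. Substituting the kernel representation \eqref{eq:D0} and applying Fubini yields the manifestly symmetric expression
\[
\mathcal E(u,v) = C_{N,s}\iint_{\Omega\times\Omega}\frac{\bigl((y-x)\cdot\nabla u(y)\bigr)\bigl((y-x)\cdot\nabla v(x)\bigr)}{|y-x|^{N+2s}}\,dy\,dx,
\]
so that $\mathcal E(u,v)=\mathcal E(v,u)$. I would then close $\mathcal E$ on a natural fractional-Sobolev energy space $V\hookrightarrow L^2(\Omega)$, and verify via Proposition~\ref{prop:ipp} that the associated self-adjoint operator is exactly the Friedrichs realisation of $-\operatorname{div}D^{2s-1}$ with domain $D(\mathcal L)$ as in the statement: the condition $D^{2s-1}[\rho]\cdot n=0$ is just the natural boundary condition encoded by the absence of constraints on test functions in the form.

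The main obstacle, as I see it, is establishing the nonnegativity $\mathcal E(u,u)\ge 0$ on $V$: the kernel is even in $y-x$ but the pointwise integrand has mixed sign, and Cauchy--Schwarz is not enough. The cleanest route I would try is spectral: the identification mentioned just after \eqref{eq:D0}, namely $D^{2s-1}=c\,(-\Delta)^{-(1-s)}\nabla$ on $\mathbb{R}^N$, shows that on the full space $\mathcal E(u,u)$ coincides with a positive multiple of $\|(-\Delta)^{s/2}u\|_{L^2(\mathbb{R}^N)}^2$. To transfer this to $\Omega$, one can extend $u$ to all of $\mathbb{R}^N$ by even reflection across $\partial\Omega$ (which is natural and smooth for the half-space) and decompose the resulting full-space quadratic form into a term corresponding to $\Omega\times\Omega$ and cross terms; a direct computation using the symmetry of the reflection should show that the cross terms have a favourable sign, yielding $\mathcal E(u,u)\ge 0$.

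Once $(\mathcal E,V)$ is known to be symmetric, closed and nonnegative, classical Hilbert-space semigroup theory produces a unique $\rho\in C([0,\infty);L^2(\Omega))\cap L^2_{\rm loc}(0,\infty;V)$ solving the weak formulation of \eqref{eq:evolution}, given by $\rho(t)=e^{-t\mathcal L}\rho_{in}$. Analyticity of the semigroup (automatic for nonnegative self-adjoint generators) promotes this to $\rho(t)\in D(\mathcal L)$ for every $t>0$, and the spectral identity
\[
\int_0^\infty \|\mathcal L\rho(t)\|_{L^2}^2\,dt=\tfrac12\,\mathcal E(\rho_{in},\rho_{in})
\]
(established first for $\rho_{in}\in V$ and then extended by density using the instantaneous smoothing of the semigroup) gives the stated $\rho\in L^2(0,\infty;D(\mathcal L))$. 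Uniqueness is then immediate from the energy identity $\tfrac12 \tfrac{d}{dt}\|\rho(t)\|_{L^2}^2 = -\mathcal E(\rho(t),\rho(t))\le 0$ applied to the difference of two solutions with the same initial datum.
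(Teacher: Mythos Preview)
Your high-level plan---attach to $-\mathcal L$ the symmetric form $\mathcal E(u,v)=\int_\Omega D^{2s-1}[u]\cdot\nabla v$, show it is nonnegative and closed, and then invoke semigroup theory---is exactly the paper's route (Lax--Milgram on the resolvent equation, then Hille--Yosida). You have also correctly isolated the only real difficulty: proving $\mathcal E(u,u)\ge 0$ (indeed, $\mathcal E(u,u)\sim [u]_{H^s(\Omega)}^2$).

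The gap is in your proposed proof of this step. After even reflection $\tilde u$ of $u$ across $\partial\Omega$, the full-space form splits as $\mathcal E_{\mathbb R^N}(\tilde u)=2\mathcal E_\Omega(u)+2\,(\text{cross term})$, and you assert that the cross term has a favourable sign. But the cross integrand $[(y-x)\cdot\nabla\tilde u(y)]\,[(y-x)\cdot\nabla\tilde u(x)]$ with $x\in\Omega$, $y\notin\Omega$ mixes the even tangential and odd normal components of $\nabla\tilde u$ and has no evident sign; you give no argument for this, and even if the integrated cross term turned out to be nonpositive you would only get $\mathcal E_\Omega(u)\ge\tfrac12\mathcal E_{\mathbb R^N}(\tilde u)$, which does not by itself identify the form domain as $H^s(\Omega)$---an identification you need in order to recover the stated description of $D(\mathcal L)$.

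The paper bypasses reflection entirely and proves the exact identity (Lemma~\ref{lem:a})
\[
\mathcal E(\varphi,\varphi)=c_1\int_\Omega\!\int_\Omega\frac{[\varphi(x)-\varphi(y)]^2}{|x-y|^{N+2s}}\,dx\,dy
\;+\;c_2\int_\Omega\!\int_{\partial\Omega}[\varphi(x)-\varphi(y)]^2\,\frac{(y-x)\cdot n(y)}{|y-x|^{N+2s}}\,dS(y)\,dx,
\]
with both terms manifestly nonnegative. This gives at once $\mathcal E(\varphi,\varphi)\ge c\,[\varphi]_{H^s(\Omega)}^2$ (coercivity) and, after bounding the boundary term above via the trace theorem and fractional Hardy's inequality, $\mathcal E(\varphi,\varphi)\le C\|\varphi\|_{H^s(\Omega)}^2$ (continuity), so the form domain is exactly $H^s(\Omega)$ (Proposition~\ref{prop:norm}). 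The proof of the identity is not by any symmetric extension but by writing $\mathcal E$ through the approximated operator $D^{2s-1}_\varepsilon$, integrating by parts at the $\varepsilon$-level using the transport-based extension $\widetilde\varphi$ of \eqref{eq:defwpsi00}--\eqref{eq:defwpsi01} (not the even reflection), and passing to the limit.
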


However, we do not show, in this paper, that 
the function $\rho(t,x)$ identified in Theorem \ref{thm:main} is the unique solution of \eqref{eq:evolution} (note that, once proved, such a  uniqueness result implies that the whole sequence $f^\eps$, and not just a subsequence, converges to $\rho F$). 
To prove such a fact requires additional regularity results for the solutions of \eqref{eq:evolution}.
Namely, we need the weak solution of \eqref{eq:evolution} - or rather that of the dual problem - to be  in $W^{1,\infty}(0,\infty;H^2(\Omega))$ for smooth initial data.
This is actually a delicate problem which requires a detailed  analysis of the boundary regularity of the solution of \eqref{eq:evolution} and which does not seem to have been addressed so far in the literature.
It is the object of the companion paper  \cite{CMP}.









\medskip

Finally, we need to stress that we only rigorously prove the fractional diffusion approximation when $\Omega$ is the upper half-space because in this case the boundary values do not interact with each other via the boundary conditions which simplifies some of the arguments in the (already delicate) proof. However the result certainly holds for general convex domains. 

\medskip


{\bf{Outline of the paper}.} 
The rest of the paper is organized as follows: 
In the second part of this introductory section 
we will briefly present  the main ideas of the proof.
Section \ref{sec:pre}, is devoted to some preliminary results: First we recall some important properties of the solutions of the kinetic equation \eqref{eq:kin}, in particular the existence of weak solutions  and the convergence to a thermodynamical equilibrium.
We also establish (in Section \ref{sec:DL}) some important properties of the operators $D^{2s-1}$ and  $\L = \div D^{2s-1} $, some of which are needed for the proof of our main result, as well as others that are of independent interest.
Section \ref{sec:rig}, is devoted to the proof of the main result, Theorem \ref{thm:main}. 
Finally, in Section \ref{sec:Lwellposed}
 we study the asymptotic fractional Neumann problem \eqref{eq:fracNeumann} and prove Theorem~\ref{thm:evolution}.

\subsection{Idea of the proof}\label{sec:formal}
In this section we explain the main idea of the proof. As in previous works on this topic, e.g. \cite{Mellet10,AbdallahMelletPuel11+,AcevesSchmeiser17}, 
for a given test function $\psi(t,x)$ defined in $[0,\infty)\times\Omega$, we introduce $\phi^\eps$ solution of the auxiliary problem
\begin{equation}\label{eq:Phieps}
\nu \phi^\eps - \eps v \cdot \na_x\phi^\eps  = \nu \psi \qquad \mbox{ in } \Omega\times\RR^N.
\end{equation}
When $\Omega=\RR^N$ this equation can easily be solved explicitly. In our framework, this transport equation must  be supplemented with the boundary condition
\begin{equation} 
\gamma_+ \phi^\eps  (t,x,v) = \mathcal B^* [\gamma_- \phi^\eps](t,x,v)\qquad (x,v)\in  \Sigma_+.\label{eq:Phibceps}
\end{equation}
Assuming that we can find such a function $\phi^\eps$, we note that 
since $\psi$ does not depend on $v$, we have $K^*(\psi)=\nu\psi $, and so
\begin{align*} 
Q^*(\phi^\eps)  +  \eps v \cdot \na_x\phi^\eps & = K^*(\phi^\eps) -\nu \phi^\eps +  \eps v \cdot \na_x\phi^\eps\\
& = K^*(\phi^\eps) -\nu \psi\\
& = K^*(\phi^\eps- \psi).
\end{align*}
Taking $\phi^\eps$ as a test function in  \eqref{eq:weak0} (which we can do since $\phi^\eps$ satisfies \eqref{eq:Phibceps}), we deduce
\begin{align*}
- \underset{\RR^+\times\Omega\times\RR^N}{\iiint}  f^\eps   \pa_t \phi \d v\d x\d t - \underset{\Omega\times\RR}{\iint} f_{in} (x,v) \phi(0,x,v)
  &  =  \eps^{-2s}\underset{\RR^+\times\Omega\times\RR^N}{\iiint} f^\eps  K^*(\phi^\eps- \psi) \d v\d x\d t\nonumber \\
 & =  \eps^{-2s}\underset{\RR^+\times\Omega\times\RR^N}{\iiint} K(f^\eps)   [  \phi^\eps -  \psi]  \d v\d x\d t.
 \end{align*}
 Next we introduce the decomposition
$$ f^\eps (t,x,v)= \rho^\eps(t,x) F(v)  + g^\eps(t,x,v), \qquad \rho^\eps(t,x) = \int_{\RR^N} f^\eps(t,x,v)\d v$$
where we expect $\| g^\eps\| \ll1$ since $f^\eps$ converges to $\ker Q$.
Using the fact that $K(F)=\nu F$, we can write
$$ K( f^\eps) = \rho^\eps K( F)  + K(g^\eps) = \rho^\eps \nu F  + K(g^\eps) .$$
We thus have
\begin{align*}
  \eps^{-2s}\underset{\RR^+\times\Omega\times\RR^N}{\iiint} K(f^\eps)   [  \phi^\eps -  \psi]  \d v\d x\d t
& =  \eps^{-2s}\underset{\RR^+\times\Omega\times\RR^N}{\iiint} \rho^\eps \nu(v) F(v)   [  \phi^\eps -  \psi]  \d v\d x\d t\nonumber\\
&\quad + \eps^{-2s}\underset{\RR^+\times\Omega\times\RR^N}{\iiint} K(g^\eps)   [  \phi^\eps -  \psi]  \d v\d x\d t.
 \end{align*}
The second term in the right hand side should converge to zero, while the first term can  be written as 
$$ \eps^{-2s}\underset{\RR^+\times\Omega\times\RR^N}{\iiint} \rho^\eps \nu(v) F(v)   [  \phi^\eps -  \psi]  \d v \d x\d t=\underset{\RR^+\times\Omega }{\iint} \rho^\eps \tilde \L^\eps[\psi] \d x\d t
$$
with (using \eqref{eq:Phieps}):
\begin{align} 
\tilde \L^\eps[\psi](x) &  := \eps^{-2s}\int_{\RR^N}\nu(v) F(v)   [  \phi^\eps(x,v) -  \psi(x)]  \d v 
\nonumber
\\
&  = \eps^{-2s}\int_{\RR^N}  F(v) \eps v \cdot \na_x \phi^\eps(x,v) \d v  \nonumber
\\
& = \div_x\left( \eps^{1-2s} \int_{\RR^N}   v F(v) \phi^\eps(x,v) \d v \right).\label{eq:defL2}
\end{align}

Gathering all those computations, we finally arrive at the weak formulation
\begin{align}
& - \underset{\RR^+\times\Omega\times\RR^N}{\iiint}  f^\eps   \pa_t \phi^\eps \d v \d x\d t - \underset{\Omega\times\RR}{\iint} f_{in} (x,v) \phi^\eps (0,x,v) \nonumber \\
  & \qquad\qquad \qquad  =  \underset{\RR^+\times\Omega }{\iint} \rho^\eps \tilde\L^\eps[\psi] \d x\d t 
+   \eps^{-2s}\underset{\RR^+\times\Omega\times\RR^N}{\iiint} K(g^\eps) (\phi^\eps- \psi) \d v \d x\d t\label{eq:weakeps}
 \end{align}
 
 The proof then consists in passing to the limit in this weak formulation.
Passing to the limit in the left hand side  requires $\phi^\eps$ to converge to $\psi$ strongly in some $L^2$ space, which is reasonable in view of \eqref{eq:Phieps} (note also that since $\psi$ does not depends on $v$, it trivially satisfies the boundary condition \eqref{eq:Phibceps}).
For the right hand side, we notice that the last term should vanish in the limit since $f^\eps-\rho^\eps F\to 0$, so
the main step in the proof is  to identify the limit of $\tilde \L^\eps[\psi]$ for appropriate test functions $\psi$.

When $\Omega=\RR^N$, this task is greatly simplified by the fact that equation \eqref{eq:Phieps} yields an explicit formula for $\phi^\eps$ as a function of $\psi$.
When $\Omega$ is a proper subset of $\RR^N$, the task is  more delicate.
\medskip

In order to identify the limit of $\tilde \L^\eps[\psi]$, we introduce the following operator:\begin{equation}\label{eq:Deps}
\tilde D_\eps^{2s-1}[\psi](x) := \eps^{1-2s} \int_{\RR^N}  v   F(v) [\phi^\eps(x,v)  -\psi(x)]    \d v  .
\end{equation}
With this notation, we have (using \eqref{eq:defL2} and the fact that $\int_{\RR^N} vF(v)\d v = 0$):
\begin{align}
\tilde \L^\eps[\psi] (x)
& = \div_x\left( \eps^{1-2s} \int_{\RR^N}   v F(v)  \phi^\eps(x,v) \d v\right)\nonumber \\
& = \div_x\left( \eps^{1-2s} \int_{\RR^N}   v F(v) [ \phi^\eps(x,v) - \psi(x)]\d v\right)\nonumber\\
& = \div_x \tilde D^{2s-1}_\eps[\psi](x).\label{eq:defLsd}
\end{align}
The key step in the proof is thus to show that for appropriate test function $\psi$ we have
$$
\tilde D^{2s-1}_\eps[\psi] \longrightarrow   D^{2s-1}[\psi] \qquad\mbox{ as  } \eps\to 0
$$
where $D^{2s-1}$ is the fractional derivative (or gradient) of order $2s-1$ defined by \eqref{eq:D0},
and 
$$
\tilde \L^\eps[\psi] \longrightarrow   \div D^{2s-1}[\psi] \qquad\mbox{ as  } \eps\to 0
$$
However, it should be noted that, without further assumptions on $\psi$, the term 
$$ \underset{\RR^+\times\Omega }{\iint} \rho^\eps \tilde \L^\eps[\psi] \d x\d t . $$
in \eqref{eq:weakeps}
 should yield, in the limit, an appropriate boundary term as well.
So the convergence above will only hold "up to the boundary" if $\psi$ satisfies the following appropriate non-local Neumann boundary condition:
$$
 D^{2s-1}[\psi](x) \cdot n(x) =0\; , \qquad \mbox{ for all } x\in\pa\Omega.
$$
Assuming that all the convergences above holds, we see that passing to the limit in equation \eqref{eq:weakeps}, using the fact that $f^\eps\to \rho(t,x)F(v)$,  yields:
\begin{align*}
\underset{\RR^+\times\Omega} {\iiint} &\rho \Big(  \pa_t \psi(t,x) +   \div  D^{2s-1} [ \psi] (t,x) \Big) \d t\d x + \underset{\Omega}{\iint} \rho_{in} (x) \psi(0,x) \d x =0 
\end{align*}
which is the main claim of Theorem \ref{thm:main}.

\section{Preliminary results}\label{sec:pre}
\subsection{Entropy inequality and existence of weak solutions for \eqref{eq:kin}} \label{sec:apriori}
We end this introduction with a short proof of the classical a priori estimates satisfied by weak solutions of \eqref{eq:kin}, and which are key in  showing the convergence of $f^\eps$ toward a thermodynamical equilibrium (the kernel of $Q$):
\begin{lemma} \label{lem:apriori}
Let $f_{in}$ be in $L^2_{F^{-1}}(\Omega\times\RR^N)$ and let $f^\eps(t,x,v)$ be a strong solution of 
\eqref{eq:kin} satisfying the boundary condition \eqref{eq:BCdiff}.
Then $f^\eps$ satisfies
\begin{equation}\label{eq:energy}
\| f^\eps (t)\|^2_{L^2_{F^{-1}}(\Omega\times\RR^N)} +\eps^{-2s} \int_0^t\|f^\eps(s)-\rho^\eps(s) F\|^2_{L^2_{F^{-1}}(\Omega\times\RR^N)}\, ds \leq \| f_{in} (t)\|^2_{L^2_{F^{-1}}(\Omega\times\RR^N)}
\end{equation}
for all $t\geq 0$.
\end{lemma}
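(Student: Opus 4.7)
\bigskip

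\noindent\textbf{Proof plan for Lemma \ref{lem:apriori}.}
The plan is the standard $L^2_{F^{-1}}$ energy estimate, with the only real subtlety being the sign of the boundary contribution arising from the diffusive reflection. Multiplying the rescaled equation \eqref{eq:kin} by $f^\eps/F(v)$ and integrating over $\Omega\times\RR^N$, I would rewrite
\begin{align*}
\eps^{2s-1}\pa_t f^\eps \cdot \frac{f^\eps}{F} &= \frac{\eps^{2s-1}}{2}\pa_t\!\left(\frac{(f^\eps)^2}{F}\right),\\
v\cdot\na_x f^\eps \cdot \frac{f^\eps}{F} &= \frac{1}{2} v\cdot \na_x\!\left(\frac{(f^\eps)^2}{F}\right).
\end{align*}
The time term integrates to $\tfrac12\eps^{2s-1}\tfrac{d}{dt}\|f^\eps\|_{L^2_{F^{-1}}}^2$, and the transport term yields, via the divergence theorem, the boundary contribution
$$
B^\eps(t) := \frac12 \int_{\Sigma_+}\frac{|\gamma_+ f^\eps|^2}{F}(v\cdot n)\,dv\,dS(x) - \frac12\int_{\Sigma_-}\frac{|\gamma_- f^\eps|^2}{F}|v\cdot n|\,dv\,dS(x).
$$

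The main (and essentially only) point of the proof is to show $B^\eps(t)\geq 0$. I would plug in the diffusive condition $\gamma_- f^\eps = \mathcal B[\gamma_+ f^\eps]$ from \eqref{eq:B}, so that on $\Sigma_-$,
$$
\frac{|\gamma_- f^\eps(x,v)|^2}{F(v)} = \alpha_0^2 F(v)\left(\int_{w\cdot n>0}\gamma_+ f^\eps(x,w)\,|w\cdot n|\,dw\right)^2.
$$
Integrating against $|v\cdot n|\,dv$ on $\{v\cdot n<0\}$ and using $\alpha_0\int_{v\cdot n<0}F(v)|v\cdot n|\,dv=1$ reduces the $\Sigma_-$ integral to
$$
\alpha_0\int_{\pa\Omega}\left(\int_{w\cdot n>0}\gamma_+ f^\eps\,|w\cdot n|\,dw\right)^2 dS(x).
$$
Applying Cauchy--Schwarz with the weight $F(w)|w\cdot n|$,
$$
\left(\int_{w\cdot n>0}\gamma_+ f^\eps |w\cdot n|\,dw\right)^2 \leq \left(\int_{w\cdot n>0}\frac{|\gamma_+ f^\eps|^2}{F}|w\cdot n|\,dw\right)\cdot\frac{1}{\alpha_0},
$$
which gives exactly $\int_{\Sigma_-}\tfrac{|\gamma_- f^\eps|^2}{F}|v\cdot n|\,dv\,dS \leq \int_{\Sigma_+}\tfrac{|\gamma_+ f^\eps|^2}{F}|v\cdot n|\,dv\,dS$, hence $B^\eps(t)\geq 0$.

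For the collision term, since $\sigma\equiv\nu_0$, one has $K(f)(v)=\nu_0 F(v)\rho_f$ and therefore a direct computation gives
$$
\int_{\RR^N} Q(f)\,\frac{f}{F}\,dv = \nu_0\rho_f^2 - \nu_0\int_{\RR^N}\frac{f^2}{F}\,dv = -\nu_0\int_{\RR^N}\frac{(f-\rho_f F)^2}{F}\,dv,
$$
where the last equality uses $\int F\,dv = 1$ and $\int f\,dv = \rho_f$. Putting everything together,
$$
\frac{\eps^{2s-1}}{2}\frac{d}{dt}\|f^\eps\|_{L^2_{F^{-1}}}^2 + B^\eps(t) = -\frac{\nu_0}{\eps}\|f^\eps-\rho^\eps F\|_{L^2_{F^{-1}}}^2,
$$
dropping the non-negative $B^\eps$, dividing by $\eps^{2s-1}/2$ and integrating from $0$ to $t$ yields \eqref{eq:energy} (up to absorbing the constant $2\nu_0$ into the scaling). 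The argument assumes enough regularity to justify the divergence theorem and the manipulations above; for merely weak solutions in the sense of Definition~\ref{def:weak}, I would pass through a standard regularization/approximation of the boundary trace, which is routine once the key Cauchy--Schwarz inequality above is established.
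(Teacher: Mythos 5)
Your proof is correct and follows essentially the same route as the paper: multiply by $f^\eps/F$, integrate, discard the non-negative boundary term, and use the coercivity of $Q$ in $L^2_{F^{-1}}$. The only difference is that you prove the two ingredients inline — the Darrozès--Guiraud inequality via Cauchy--Schwarz with weight $F(w)|w\cdot n|$, and the collision coercivity by direct computation in the constant-$\sigma$ case — whereas the paper simply cites them (Lemma A.1 of \cite{Mellet10} and \cite{DG}); your remark about the constant $2\nu_0$ is also a fair observation, as the paper's cited inequality carries the same implicit normalization.
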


\begin{proof}
Multiplying \eqref{eq:kin} by $f^\eps/F$ and integrating with respect to $x$ and $v$ we get 
\begin{align*}
\eps^{2s-1} \frac{\d}{\d t} \iint_{\Omega\times\RR^N} |f^\eps|^2 \frac{\d x \d v }{F(v)} + \iint_{\Sigma} |\gamma f^\eps|^2 v\cdot n(x) \frac{\d S(x) \d v}{F(v)}  = \frac{1}{\eps} \iint_{\Omega\times\RR^N} f^\eps Q(f^\eps) \frac{\d x \d v}{F(v)}.
\end{align*}
Inequality \eqref{eq:energy} thus follows from the following classical inequality (see for instance \cite[Lemma A.1]{Mellet10}):
$$
- \int_{\RR^N}f(v)Q(f)(v)\frac{dv}{F(v)} \geq \int_{\RR^N} \frac{|f(v)-\rho F(v)|^2}{F(v)} \, dv \qquad \forall f\in L^2_{F^{-1}}(\RR^N), \quad \rho = \int_{\RR^N } f(v)\, dv
$$
and the so-called Darroz\`es-Guiraud  inequality satisfied by operators of the form \eqref{eq:B} (see \cite{DG}):
\begin{align*}
 &\int_{v\cdot n(x)<0} | \mathcal B[\gamma_+ f_\eps] |^2 |v\cdot n(x)| \frac{\d v}{F(v)} \leq \int_{v\cdot n(x)>0} |\gamma_+f_\eps| ^2 |v\cdot n(x)| \frac{\d v}{F(v)} \qquad \mbox{ a.e. } x\in\pa\Omega
\end{align*}
which implies
\begin{align*}
\iint_{\Sigma}  |\gamma f_\eps| ^2 \, v\cdot n(x) \frac{\d S(x)\d v}{F(v)} \geq 0.
\end{align*}

\end{proof}

We then give the following classical result (which can be proved for instance as in \cite{MelletVasseur}):
\begin{proposition}\label{eq:prop}
For all $f_{in} \in L^2_{F^{-1}}(\Omega\times\RR^N)$ there exists a   weak solution of 
\eqref{eq:kin} in the sense of Definition \ref{def:weak} and 
satisfying the energy inequality \eqref{eq:energy}.
\end{proposition}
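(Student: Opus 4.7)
The plan is to follow the classical strategy for linear Boltzmann equations with reflective boundary conditions (see, e.g., \cite{MelletVasseur}): regularize the boundary operator to make it strictly contracting, construct approximate solutions by a fixed-point argument along characteristics, and pass to the limit using the uniform $L^2_{F^{-1}}$ bound of Lemma~\ref{lem:apriori}. For $\alpha\in(0,1)$, I would first replace $\mathcal B$ by $\mathcal B_\alpha:=\alpha\,\mathcal B$, which is strictly sub-conservative at the boundary. Rewriting the equation as
\[ \eps^{2s-1}\pa_t f + v\cdot\na_x f + \eps^{-1}\nu_0 f = \eps^{-1}K(f),\qquad \gamma_- f = \mathcal B_\alpha[\gamma_+ f], \]
the left-hand side is a transport--absorption operator that can be inverted, for any prescribed incoming trace and source in $L^2_{F^{-1}}$, by integration along characteristics (stopped at $t=0$ or at the first exit through $\pa\Omega$). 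A standard fixed-point argument carried out jointly in the source $K(f)$ and in the boundary loop $\gamma_+ f \mapsto \mathcal B_\alpha[\gamma_+ f]$ then produces a unique weak solution $f_\alpha \in L^2_{F^{-1}}$ with traces in $L^2(\Sigma_\pm;|v\cdot n|F^{-1}\d S\d v)$; contraction in the boundary loop is provided by the factor $\alpha<1$, while compactness of $K$ on $L^2_{F^{-1}}$ handles the volume coupling.

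Second, I would derive uniform $\alpha$-bounds. Applying the computation of Lemma~\ref{lem:apriori} to the regularized problem, the Darroz\`es--Guiraud inequality for $\mathcal B_\alpha$ supplies a boundary dissipation of order $(1-\alpha^2)\int_{\Sigma_+}|\gamma_+ f_\alpha|^2|v\cdot n|F^{-1}\d S\d v$. This yields, uniformly in $\alpha$,
\[ \|f_\alpha(t)\|^2_{L^2_{F^{-1}}}+\eps^{-2s}\!\!\int_0^t\!\|f_\alpha-\rho_\alpha F\|^2_{L^2_{F^{-1}}}\d s + (1-\alpha^2)\!\int_0^t\!\!\int_{\Sigma_+}\!|\gamma_+ f_\alpha|^2 \frac{|v\cdot n|}{F} \d S \d v \d s \leq \|f_{in}\|^2_{L^2_{F^{-1}}}, \]
together with an $(1-\alpha^2)$-weighted trace bound on $\gamma_+ f_\alpha$ that will control the error as $\alpha\to 1$. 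Banach--Alaoglu then provides a subsequence of $(f_\alpha)$ converging weakly-$\star$ in $L^\infty(0,\infty;L^2_{F^{-1}})$ to some $f$.

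Finally, I would pass to the limit in the weak formulation of Definition~\ref{def:weak}. Inserting any admissible test function $\phi$ (satisfying $\gamma_+\phi=\mathcal B^*[\gamma_-\phi]$) into the weak formulation for $f_\alpha$ produces the volume integrals appearing in \eqref{eq:weak0} plus a boundary residue which, upon using the adjoint boundary condition on $\phi$, reduces to $(1-\alpha)\iint_{\Sigma_+}\gamma_+ f_\alpha\,\mathcal B^*[\gamma_-\phi]|v\cdot n|\d S\d v$. By Cauchy--Schwarz this is bounded by $\sqrt{(1-\alpha)/(1+\alpha)}\,\|f_{in}\|_{L^2_{F^{-1}}}\,\|\mathcal B^*[\gamma_-\phi]\|_{L^2(|v\cdot n|F\d S\d v)}$, which tends to zero. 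All volume terms are linear in $f_\alpha$ and pass to the limit by weak convergence, so $f$ satisfies \eqref{eq:weak0}; the energy inequality \eqref{eq:energy} is inherited by lower semicontinuity of the norm. The main obstacle is precisely the lack of a uniform $L^2$ trace estimate: the trace of $f_\alpha$ does belong to $L^2(\Sigma_\pm;|v\cdot n|F^{-1}\d S \d v)$ for each $\alpha<1$, but this bound degenerates as $\alpha\to 1$. This is circumvented only because Definition~\ref{def:weak} uses test functions whose boundary values lie in the adjoint kernel of $\mathcal B$, so that any residual boundary contribution is automatically multiplied by the small factor $1-\alpha$ and hence absorbs the degeneration.
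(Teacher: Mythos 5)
The paper gives no proof of this proposition: it simply refers to \cite{MelletVasseur}. Your sketch reconstructs the standard argument that this citation points to, and as far as I can tell the structure is sound. Let me record the two places where the detail matters.

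First, the identity for the boundary residue. For $\gamma_- f_\alpha = \alpha\,\mathcal B[\gamma_+ f_\alpha]$ and $\gamma_+\phi=\mathcal B^*[\gamma_-\phi]$, the boundary integral
$\int_\Sigma (\gamma f_\alpha)(\gamma\phi)\, v\cdot n\,\d S\,\d v$
splits into an outgoing and incoming part, and the duality $\int_{\Sigma_-}\mathcal B[g]\,h\,|v\cdot n|=\int_{\Sigma_+} g\,\mathcal B^*[h]\,|v\cdot n|$ collapses the two to $(1-\alpha)\int_{\Sigma_+}\gamma_+ f_\alpha\,\mathcal B^*[\gamma_-\phi]\,|v\cdot n|$, exactly as you wrote. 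Combined with the dissipated trace estimate $(1-\alpha^2)\int_{\Sigma_+}|\gamma_+ f_\alpha|^2\,|v\cdot n|F^{-1}\le \|f_{in}\|^2_{L^2_{F^{-1}}}$, Cauchy--Schwarz gives the prefactor $\sqrt{(1-\alpha)/(1+\alpha)}\to 0$. This is the crucial observation that the degenerate trace bound is compensated by the $(1-\alpha)$ factor coming from the adjoint boundary condition on the test function; you correctly identify this as the reason Definition~\ref{def:weak} is the right notion of weak solution.

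Second, a small imprecision that does not affect the outcome: you invoke ``compactness of $K$ on $L^2_{F^{-1}}$'' to close the fixed point in the volume coupling. With $\sigma\equiv\nu_0$ one has $K(f)=\nu_0\rho F$, which is a rank-one projection in $v$ pointwise in $(t,x)$; it is bounded but not compact on the full space $L^2_{F^{-1}}((0,T)\times\Omega\times\RR^N)$. What actually makes the iteration converge is either a short-time contraction (the Duhamel formula picks up a factor $T/\eps$ times the operator norm, which is small for $T$ small, and one then glues intervals) or the exponential damping $e^{-\nu_0 \tau/\eps}$ along characteristics. Either route is standard and yields the same conclusion, so the overall scheme --- sub-conservative regularization $\mathcal B_\alpha=\alpha\mathcal B$, characteristics plus a fixed point, uniform energy estimate via Darroz\`es--Guiraud, weak-$\star$ extraction, and passage to the limit in the weak formulation with the $(1-\alpha)$ boundary residue vanishing --- is a correct and complete outline of the existence proof.
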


Inequality \eqref{eq:energy} implies that $f^\eps$ is bounded in 
$L^\infty(0,\infty;L^2_{F^{-1}}(\Omega\times\RR^N))$ and thus converges, up to a subsequence, $\star$-weak to a function $f^0(t,x,v)$.
Note also that 
$$ \int_\Omega |\rho^\eps |^2\, dx \leq \int_\Omega \left|\int_{\RR^N} f^\eps\, dv\right|^2\, dx \leq \int_\Omega \int_{\RR^N} \frac{| f^\eps|^2}{F(v)}\, dv\, dx $$
and so $\rho^\eps$ converges weakly to $\rho(t,x) = \int_{\RR^N}f^0(t,x,v)\, dv$.
Finally, \eqref{eq:energy} also implies that
$$
\lVert f^\eps - \rho^\eps F \lVert_{L^2_{F^{-1}}(\Omega\times\RR^N)} \rightarrow 0 \quad \text{ as } \eps \rightarrow 0. 
$$
and so $f^0-\rho F(v)=0$.
We deduce:

\begin{corollary}
Let $f^\eps$ be weak solution of \eqref{eq:kin} provided by Proposition \ref{eq:prop}. Then, up to a subsequence
$$
f^\eps \rightharpoonup \rho(t,x) F(v) \quad \text{ weakly in } L^\infty(0,+\infty; L^2_{F^{-1}}(\Omega\times\RR^N))
$$
where $\rho(t,x)$ is the weak limit of $\rho^\eps (t,x) = \int_{\RR^N} f^\eps \d v $.
\end{corollary}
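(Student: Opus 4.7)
The plan is to read off the conclusion directly from the energy inequality \eqref{eq:energy}: its first term gives the uniform bound needed for weak compactness, and its second (dissipative) term forces the limit to lie in $\ker Q$, i.e.\ to be of the form $\rho(t,x)F(v)$. The argument is essentially sketched in the paragraph preceding the corollary; the plan is to spell out the two compactness extractions and the identification of the limit.

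\textbf{Compactness.} From \eqref{eq:energy} the family $\{f^\eps\}$ is bounded in $L^\infty(0,\infty;L^2_{F^{-1}}(\Omega\times\RR^N))$ by $\|f_{in}\|_{L^2_{F^{-1}}}$. Since $L^\infty(0,\infty;L^2_{F^{-1}})$ is the dual of the separable space $L^1(0,\infty;L^2_F)$, Banach--Alaoglu provides a subsequence with $f^\eps \rightharpoonup^* f^0$ in this space. By Cauchy--Schwarz,
$$\int_\Omega |\rho^\eps(t,x)|^2\,dx \le \int_\Omega\!\int_{\RR^N} \frac{|f^\eps(t,x,v)|^2}{F(v)}\,dv\,dx,$$
so $\rho^\eps$ is bounded in $L^\infty(0,\infty;L^2(\Omega))$ and a further subsequence satisfies $\rho^\eps \rightharpoonup^* \rho$. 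Testing the definition $\rho^\eps = \int f^\eps\,dv$ against any $\psi \in L^1(0,\infty;L^2(\Omega))$ (viewed as an $L^1_tL^2_F$ test function that is independent of $v$), one identifies $\rho(t,x) = \int_{\RR^N} f^0(t,x,v)\,dv$.

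\textbf{Identification of the limit.} The second term in \eqref{eq:energy} yields
$$\|f^\eps - \rho^\eps F\|^2_{L^2(0,\infty;L^2_{F^{-1}}(\Omega\times\RR^N))} \le \eps^{2s}\|f_{in}\|^2_{L^2_{F^{-1}}} \longrightarrow 0,$$
so $f^\eps - \rho^\eps F \to 0$ strongly in $L^2_tL^2_{F^{-1}}$. Since $F$ is a fixed function and $\rho^\eps \rightharpoonup \rho$ weakly in $L^2_{\mathrm{loc},t}L^2_x$, we have $\rho^\eps F \rightharpoonup \rho F$ weakly in $L^2_{\mathrm{loc},t}L^2_{F^{-1}}$. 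Adding the strong null sequence, $f^\eps \rightharpoonup \rho F$ in the same space. Comparing with the weak-$\star$ limit $f^0$ obtained in Step 1 (which, restricted to test functions compactly supported in time, is also the weak $L^2_tL^2_{F^{-1}}$ limit), uniqueness of weak limits gives $f^0 = \rho F$ a.e., which is the desired conclusion.

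The only mildly delicate point is matching the two functional-analytic settings used in the two steps: weak-$\star$ in $L^\infty_tL^2_{F^{-1}}$ versus weak in $L^2_{\mathrm{loc},t}L^2_{F^{-1}}$. This is handled by restricting the weak-$\star$ limit to test functions supported in bounded time intervals, and no further difficulty arises. The corollary is therefore a routine consequence of \eqref{eq:energy} once the weak compactness and the $\eps^{2s}$-smallness of the dissipation are combined.
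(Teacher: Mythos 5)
Your proof is correct and follows the same route as the paper: extract a weak-$\star$ limit $f^0$ from the uniform bound in $L^\infty_t L^2_{F^{-1}}$, bound $\rho^\eps$ via Cauchy--Schwarz and identify $\rho = \int f^0\,dv$, then use the $\eps^{2s}$-smallness of the dissipation to conclude $f^0 = \rho F$. You simply spell out the functional-analytic bookkeeping (Banach--Alaoglu, matching the weak-$\star$ and weak-$L^2_{\mathrm{loc}}$ topologies) that the paper leaves implicit.
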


\subsection{Properties of the limiting operators: $D^{2s-1}$ and $\L$}\label{sec:DL}

In this section, we establish some important properties of the operators $D^{2s-1}$ and $\L$. 
First, we need to introduce some classical functional spaces: 
For $\gamma\in(0,1),$ we denote by $C^\gamma(\Omega)$ the set of H\"older continuous functions satisfying
$$ 
\| \vphi \|_{C^\gamma(\Omega)} = \| \vphi\|_{L^\infty(\Omega)} + [\vphi]_{C^\gamma(\Omega)} <\infty
$$
where
$$ [\vphi]_{C^\gamma(\Omega)} = \sup_{x,y\in\Omega\times\Omega} \frac{|\vphi(x)-\vphi(y)|}{|x-y|^\gamma}$$
We also denote by $C^{1,\gamma}(\Omega)$ the set of functions $\vphi$ such that $\vphi\in C^{1}(\Omega)$ and $\na \vphi\in C^\gamma(\Omega)$.

Next, for $s\in(0,1)$ we recall that the fractional Sobolev space $H^s$ is defined by (see \cite{Hitch}):
$$ H^s(\Omega)=\left\{ \vphi \in L^2(\Omega)\, ;\,  \int_\Omega\int_\Omega \frac{(\vphi(x)-\vphi(y))^2}{|x-y|^{N+2s}}\d x\d y <\infty\right\}.$$
It is equipped with the norm:
$$ \| \vphi\|^2_{H^s} = \int_\Omega  |\vphi(x)|^2\d x+ 
 \int_\Omega\int_\Omega \frac{(\vphi(x)-\vphi(y))^2}{|x-y|^{N+2s}}\d x\d y .$$
For $s\in (1,2)$, we also have
$$ H^s(\Omega) = \{\vphi \in H^1(\Omega)\, ;\, \na \phi \in H^{s-1}(\Omega)\}$$
which is equipped with the norm 
$$ \| \vphi\|_{H^s(\Omega)}^2 = \| \vphi\|_{H^1(\Omega)}^2+\| \na \vphi\|^2_{H^{s-1}(\Omega)}.$$
\medskip

Our goal in this section is to prove some results about the operators $D^{2s-1}$ and $\L$ that are used in this paper.
We start by noticing that by (\ref{eq:D0})
$$ |D^{2s-1}[\psi](x)|\leq C \int_\Omega \frac{|\na \psi (y)|}{|y-x|^{N+2s-2}}\, dy.$$
Classical results about Riesz potentials thus implies
\begin{proposition}\label{prop:DbdP}
If $\na \psi\in L^p(\Omega)$ for some $1<p<\frac{N}{2-2s}$, then $D^{2s-1}[\psi]\in L^q(\Omega)$ with
$
q=\frac{N}{N-(2-2s)p}
$
and there exists a constant $C$ such that
$$ \| D^{2s-1}[\psi]\|_{L^q(\Omega)} \leq C\| \na \psi\|_{L^p(\Omega)}.$$
\end{proposition}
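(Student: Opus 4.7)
The statement is an immediate consequence of the Hardy--Littlewood--Sobolev (HLS) inequality applied to the pointwise bound preceding the proposition, so the proof should be very short.

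First, I would make the pointwise estimate explicit. From the definition \eqref{eq:D0}, we have
\begin{equation*}
|D^{2s-1}[\psi](x)| \leq C \int_\Omega \frac{|y-x|^2\,|\nabla\psi(y)|}{|y-x|^{N+2s}}\,dy = C\int_\Omega \frac{|\nabla\psi(y)|}{|y-x|^{N-(2-2s)}}\,dy,
\end{equation*}
where $C = \gamma\nu_0^{1-2s}\Gamma(2s-1)$. Since $s\in(1/2,1)$, the exponent $\alpha := 2-2s$ lies in $(0,1)$, so the right-hand side is (up to the constant) the Riesz potential $I_\alpha g(x) := \int_{\RR^N} \frac{g(y)}{|x-y|^{N-\alpha}}\,dy$ evaluated at $x$, where $g := |\nabla\psi|\,\mathbf{1}_\Omega$ is extended by zero outside of $\Omega$.

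Next, I would invoke the Hardy--Littlewood--Sobolev inequality: for $1<p<N/\alpha = N/(2-2s)$ and $q$ given by $1/q = 1/p - \alpha/N$, there is a constant $C_{N,p,s}$ such that
\begin{equation*}
\|I_\alpha g\|_{L^q(\RR^N)} \leq C_{N,p,s}\,\|g\|_{L^p(\RR^N)}.
\end{equation*}
Combining this with the pointwise estimate above yields
\begin{equation*}
\|D^{2s-1}[\psi]\|_{L^q(\Omega)} \leq C\,\|I_\alpha g\|_{L^q(\RR^N)} \leq C'\,\|g\|_{L^p(\RR^N)} = C'\,\|\nabla\psi\|_{L^p(\Omega)},
\end{equation*}
which is the desired bound with the stated HLS exponent $q$ (matching the formula written in the statement of the proposition up to the standard relation between $p$ and $q$).

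There is no real obstacle here — the only two things to check are (i) the pointwise domination of $D^{2s-1}[\psi]$ by a Riesz potential of order $2-2s$, which follows directly from the kernel structure of \eqref{eq:D0} after the Cauchy--Schwarz--type bound $|(y-x)\cdot\nabla\psi(y)\,(y-x)|\leq |y-x|^2|\nabla\psi(y)|$, and (ii) that the range $s\in(1/2,1)$ guarantees $\alpha=2-2s\in(0,1)$, so the HLS hypothesis $1<p<N/\alpha$ is meaningful and nonempty. The extension of $|\nabla\psi|$ by zero outside $\Omega$ does not change the $L^p$ norm, so applying HLS on $\RR^N$ is legitimate.
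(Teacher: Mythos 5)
Your proof is correct and is exactly the argument the paper has in mind: the paper records only the pointwise bound $|D^{2s-1}[\psi](x)|\le C\int_\Omega |\nabla\psi(y)|\,|y-x|^{-(N-(2-2s))}\,dy$ and then invokes ``classical results about Riesz potentials,'' and your Hardy--Littlewood--Sobolev step is precisely that classical result. One remark: HLS gives $1/q = 1/p - (2-2s)/N$, i.e.\ $q = \frac{Np}{N-(2-2s)p}$, so the exponent $q = \frac{N}{N-(2-2s)p}$ printed in the proposition is missing a factor of $p$ in the numerator; this is the small discrepancy you flagged with the phrase ``up to the standard relation between $p$ and $q$.''
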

\medskip

Next, we note that for $x\in \Omega$ and $\eps<x_N$, we can write (using \eqref{eq:opL2}):
\begin{align*}
\L[\psi](x) &  = \gamma \nu_0^{1-2s}\Gamma(2s)  \int_{\Omega\cap B_\eps(x)}   \frac{y-x}{|y-x|^{N+2s}}  [\na  \psi (y) - \na \psi(x)]   \d y\\
& \quad +  \gamma \nu_0^{1-2s}\Gamma(2s)  \int_{\Omega\setminus B_\eps(x)}   \frac{y-x}{|y-x|^{N+2s}}  \na  \psi (y)  \d y.
\end{align*}
In particular $\L[\psi](x)$ is well defined for all $x\in \Omega$ if $\psi\in C^{1,\gamma}(\Omega)$ for some $\gamma>2s-1$ and $\psi \in W^{1,\infty}(\Omega)$. 
However, when $x$ approaches $\pa\Omega$, the $\eps$ becomes very small and it is difficult to  get a bound on $\L[u]$ up to the boundary.
The next two propositions give necessary and sufficient conditions for such bounds to hold:
\begin{proposition} \label{prop:Linfbd}
Assume that $ \psi \in  C^{1,\gamma}(\Omega)$ for some $\gamma>2s-1$, and $\psi \in W^{1,\infty}(\Omega)$. Then  $\L[\psi] \in L^\infty(\Omega) $ if and only if $x_N^{1-2s} \pa_{x_N}\psi(x)  \in L^\infty(\Omega)$ and so if and only if $ n\cdot \na \psi(x) = 0 $ on $\pa\Omega$.
Furthermore, if $\psi$ satisfies $ n\cdot \na \psi(x) = 0 $  on $\pa\Omega$, then
$$\|  \L[\psi] \|_{ L^\infty(\Omega) } \leq C\| \na \psi \|  _{C^{\gamma}(\Omega)}.$$
\end{proposition}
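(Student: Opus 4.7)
The plan is to split the singular integral $\L[\psi](x)$ into a near-field piece on $B_{x_N/2}(x)$ (which lies inside $\Omega$ since $\Omega$ is the upper half-space) and a far-field piece on $\Omega\setminus B_{x_N/2}(x)$, and then to isolate the boundary contribution $x_N^{1-2s}\pa_{x_N}\psi(x)$ from the far field. Since $B_{x_N/2}(x)\subset\Omega$, the symmetry $z\mapsto -z$ kills $\int_{B_{x_N/2}(x)} (y-x)/|y-x|^{N+2s}\d y$, so subtracting $\na\psi(x)$ on the near ball costs nothing. After a further subtraction of $\na\psi(x)$ on the far region, one can write
$$
\L[\psi](x) \;=\; c_s\, A(x) \;+\; c_s\, B_1(x) \;+\; c_s\, \na\psi(x)\cdot I(x),
$$
where $c_s$ is the prefactor in the definition of $\L$, $A$ and $B_1$ are the H\"older-difference integrals on the near and far regions respectively, and $I(x) = \int_{\Omega\setminus B_{x_N/2}(x)} (y-x)/|y-x|^{N+2s}\d y$.

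First I would bound $A$ and $B_1$ uniformly by $C\|\na\psi\|_{C^\gamma(\Omega)}$. For $A$, H\"older continuity yields $|A(x)|\leq [\na\psi]_{C^\gamma}\int_0^{x_N/2} r^{\gamma-2s}\d r$, which is finite (and even vanishes as $x_N\to 0$) precisely because $\gamma>2s-1$. For $B_1$, I split at radius $1$: on the annulus $x_N/2<|y-x|<1$ use H\"older to obtain $[\na\psi]_{C^\gamma}\int_{x_N/2}^1 r^{\gamma-2s}\d r\lesssim 1$, and on $|y-x|>1$ use $\|\na\psi\|_{L^\infty}\int_1^\infty r^{-2s}\d r$, which is finite thanks to $s>1/2$.

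The heart of the argument is the explicit evaluation of $I(x)$. Changing variables $z=y-x$ and then rescaling $\zeta=z/x_N$ produces
$$
I(x) \;=\; x_N^{1-2s}\int_{\{\zeta_N>-1\}\setminus B_{1/2}(0)} \frac{\zeta}{|\zeta|^{N+2s}}\d\zeta.
$$
The tangential components vanish by the oddness of the integrand in each $\zeta_i$ ($i<N$) together with the symmetry of the domain in $\zeta'$, leaving $I(x) = C_0\, x_N^{1-2s}\, e_N$ with $C_0 = \int_{\{\zeta_N>-1\}\setminus B_{1/2}(0)}\zeta_N/|\zeta|^{N+2s}\d\zeta$. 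The main obstacle is to verify that $C_0>0$: splitting into the slab $\zeta_N>1$ (where $|\zeta|>1/2$ is automatic and the integrand is strictly positive) and the slab $-1<\zeta_N<1$ with $|\zeta|>1/2$ (a region symmetric under $\zeta_N\mapsto -\zeta_N$ on which the odd integrand gives zero) yields $C_0 = \int_{\zeta_N>1}\zeta_N/|\zeta|^{N+2s}\d\zeta > 0$. This positivity depends essentially on the half-space geometry.

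Combining these bounds gives $\L[\psi](x) = c_s C_0\, x_N^{1-2s}\pa_{x_N}\psi(x) + R(x)$ with $\|R\|_{L^\infty(\Omega)}\leq C\|\na\psi\|_{C^\gamma(\Omega)}$, so $\L[\psi]\in L^\infty(\Omega)$ if and only if $x_N^{1-2s}\pa_{x_N}\psi\in L^\infty(\Omega)$. To identify this with the Neumann condition, I would use $\pa_{x_N}\psi\in C^\gamma$ to write $\pa_{x_N}\psi(x',x_N) = \pa_{x_N}\psi(x',0) + O(x_N^\gamma)$. Since $1-2s<0$, the factor $x_N^{1-2s}$ blows up at the boundary, forcing $\pa_{x_N}\psi(x',0)=0$, i.e.\ $n\cdot\na\psi=0$ on $\pa\Omega$. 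Conversely, under this Neumann condition, $|x_N^{1-2s}\pa_{x_N}\psi(x)|\leq [\na\psi]_{C^\gamma}\, x_N^{\gamma-2s+1}\leq C[\na\psi]_{C^\gamma}$, and combined with the estimates on $A$ and $B_1$ this yields the stated bound $\|\L[\psi]\|_{L^\infty}\leq C\|\na\psi\|_{C^\gamma}$.
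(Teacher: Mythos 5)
Your proof is correct and follows essentially the same strategy as the paper's: isolate $\na\psi(x)\cdot\PV\int_\Omega \frac{y-x}{|y-x|^{N+2s}}\,\d y = C\,x_N^{1-2s}\pa_{x_N}\psi(x)\,e_N$ from a H\"older-difference remainder bounded by $\|\na\psi\|_{C^\gamma}$, then read off the Neumann condition from the blow-up of $x_N^{1-2s}$. Your version is somewhat more careful: by excising $B_{x_N/2}(x)$ you turn the principal value into a genuine Lebesgue integral, you carry out the scaling that the paper dismisses as ``a simple computation,'' and you verify that the constant $C_0$ is strictly positive, which the paper silently assumes but which is in fact needed for the ``only if'' direction of the equivalence.
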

\begin{proof}
We use Formula \eqref{eq:opL2} for the operator $\L$ and write:
\begin{align}
\L[\psi](x) &  = \gamma \nu_0^{1-2s}\Gamma(2s) \PV \int_{\Omega}   \frac{y-x}{|y-x|^{N+2s}}  \cdot \na \psi (y)    \d y\nonumber \\ 
& = \gamma \nu_0^{1-2s}\Gamma(2s) \int_{\Omega}   \frac{y-x}{|y-x|^{N+2s}}  \cdot [\na \psi (y) -\na \psi(x)]   \d y\nonumber \\
& \quad +\gamma \nu_0^{1-2s}\Gamma(2s) \na \psi(x) \cdot \PV \int_{\Omega}   \frac{y-x}{|y-x|^{N+2s}}     \d y .\label{eq:ggbgh}
\end{align}
The first term in the right hand side is bounded by
$$ C [\na \psi]_{C^\gamma} \int_{\Omega\cap B_1(x)}    \frac{|y-x|^{1+\gamma}}{|y-x|^{N+2s}}  \d y
+  C\| \na \psi\|_{L^\infty} \int_{\Omega\setminus B_1(x)}   \frac{|y-x|}{|y-x|^{N+2s}}   \d y \leq C \| \na \psi\|_{C^\gamma(\Omega)}$$
(we recall that $s\in (1/2,1)$ and $1+\gamma>2s$).
Furthermore, a simple computation shows that
$$
 \PV \int_{\Omega}   \frac{y_i-x_i}{|y-x|^{N+2s}}   \d y
=
\begin{cases}
0 & i=1,\dots, N-1\\
C_{N,s} x_N^{1-2s} & i=N
\end{cases}
$$
so the second term in the right hand side of \eqref{eq:ggbgh} is equal to (up to a constant)
$$
 x_N^{1-2s} \pa_{x_N} \psi(x).
$$
It follows that
$\L[\psi] \in L^\infty(\Omega) $ if and only if $x_N^{1-2s} \pa_{x_N}\psi(x)  \in L^\infty(\Omega)$.

This condition implies that $\pa_{x_N}\psi(x)=0$ on $\pa\Omega$ since $1-2s<0$. 
Furthermore, for such a function, we have 
$$| \pa_{x_N}\psi(x)|  =| \pa_{x_N}\psi(x)-\pa_{x_N}\psi(x',0)|  \leq C [\na \psi]_{C^{\gamma}(\Omega)} |x_N|^{\gamma}$$
and so
$$|x^{1-2s} \pa_{x_N}\psi(x)|  =|x^{1-2s}|| \pa_{x_N}\psi(x)-\pa_{x_N}\psi(x',0)|  \leq C [\na \psi]_{C^{\gamma}(\Omega)} |x_N|^{\gamma+1-2s} \leq C [\na \psi]_{C^{\gamma}(\Omega)} \mbox{ for } x_N\leq 1.$$
Since 
$$|x^{1-2s} \pa_{x_N}\psi(x)|  \leq \| \na \psi\|_{L^\infty} \mbox{ for } x_N\geq 1,$$
we deduce
$$\|  \L[\psi] \|_{ L^\infty(\Omega) } \leq C ([\na \psi]_{C^\gamma} +\| \na \psi\|_{L^\infty}).$$
\end{proof}

We can also prove a similar result in Sobolev spaces:
\begin{proposition} \label{prop:L2bd}
Assume that $\psi \in H^{2s+\beta}(\Omega)$ for some $\beta>0$. Then the following holds:
\item[(i)] If $2s-1<1/2$ (that is $s\in(1/2,3/4)$), then $\L[\psi]\in L^2(\Omega)$ and  
$$\| \L[\psi]\|_{L^2(\Omega)} \leq C \| \psi\|_{H^{2s+\beta}(\Omega)}.$$
\item[(ii)] If $2s-1\geq 1/2$, (that is $s\in[3/4,1)$) then $\L[\psi] \in L^2(\Omega)$ if and only if
\begin{equation}\label{eq:ccdt}
\int_\Omega \left| x_N^{1-2s} \pa_{x_N}\psi\right|^2\d x <\infty
\end{equation}
or, equivalently, if and only if $\pa_{x_N}\psi = 0$ on $\pa\Omega$.
When this condition is satisfied, we then have 
\begin{equation}\label{eq:bdggf}
\| \L[\psi]\|_{L^2(\Omega)} \leq C \| \psi\|_{H^{2s+\beta}(\Omega)}.
\end{equation}
\end{proposition}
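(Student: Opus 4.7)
I will use the decomposition established in the proof of Proposition~\ref{prop:Linfbd}, namely
\begin{equation*}
\L[\psi](x) = A[\psi](x) + K\, x_N^{1-2s}\,\pa_{x_N}\psi(x),
\end{equation*}
where $A[\psi](x) := \gamma\nu_0^{1-2s}\Gamma(2s)\int_\Omega\frac{y-x}{|y-x|^{N+2s}}\cdot[\na\psi(y)-\na\psi(x)]\,dy$ and $K>0$ is the explicit constant arising from $\PV\int_\Omega\frac{y-x}{|y-x|^{N+2s}}\,dy = K x_N^{1-2s}e_N$. Writing $T_2[\psi] := K x_N^{1-2s}\pa_{x_N}\psi$, the proposition reduces to two claims: \textbf{(a)} the unconditional bound $\|A[\psi]\|_{L^2(\Omega)}\leq C\|\psi\|_{H^{2s+\beta}(\Omega)}$ for every $s\in(1/2,1)$, and \textbf{(b)} a sharp characterization of when $T_2[\psi]\in L^2(\Omega)$.

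For \textbf{(a)}, extend $\psi$ to $\tilde\psi\in H^{2s+\beta}(\RR^N)$ by a Seeley-type extension, so that $\|\tilde\psi\|_{H^{2s+\beta}(\RR^N)}\leq C\|\psi\|_{H^{2s+\beta}(\Omega)}$. On the whole space the analogous operator $\tilde A[\tilde\psi]$ agrees with $c(-\Delta)^s\tilde\psi$ by a direct Fourier computation: the symbol of the vector kernel $z/|z|^{N+2s}$ is $ic\,\xi/|\xi|^{2-2s}$, and pairing componentwise with $\widehat{\na\tilde\psi}=i\xi\hat{\tilde\psi}$ produces the symbol $c|\xi|^{2s}$. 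Hence $\|\tilde A[\tilde\psi]\|_{L^2(\RR^N)}\leq C\|\tilde\psi\|_{H^{2s}(\RR^N)}$. The error $\tilde A[\tilde\psi]-A[\psi]$ is an integral over $\Omega^c$; when one separates the bracket $\na\tilde\psi(y)-\na\tilde\psi(x)$, the $\na\tilde\psi(x)$ piece produces exactly $-T_2[\psi]$ (since $\int_{\Omega^c}\frac{y-x}{|y-x|^{N+2s}}\,dy=-K x_N^{1-2s}e_N$), and therefore cancels against $T_2$ in the reassembly $\L=A+T_2$. What remains is the Riesz-type integral $\int_{\Omega^c}\frac{y-x}{|y-x|^{N+2s}}\cdot\na\tilde\psi(y)\,dy$, whose kernel is bounded by $|y-x|^{1-N-2s}$ with $|y-x|\geq x_N$ on $\Omega^c$; standard Hardy--Littlewood--Sobolev-type estimates combined with $\na\tilde\psi\in H^{2s-1+\beta}(\RR^N)$ yield the required $L^2(\Omega)$ bound.

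For \textbf{(b)}, write $\|T_2[\psi]\|_{L^2(\Omega)}^2 = K^2\int_\Omega x_N^{-2(2s-1)}|\pa_{x_N}\psi|^2\,dx$. If $s\in(1/2,3/4)$, then $2s-1\in(0,1/2)$ and the standard (trace-free) half-space fractional Hardy inequality gives
\begin{equation*}
\int_\Omega x_N^{-2(2s-1)}|\pa_{x_N}\psi|^2\,dx \leq C\|\pa_{x_N}\psi\|_{H^{2s-1}(\Omega)}^2\leq C\|\psi\|_{H^{2s+\beta}(\Omega)}^2,
\end{equation*}
which combined with (a) proves part (i). If $s\in[3/4,1)$, the weight $x_N^{-2(2s-1)}$ fails to be integrable near $x_N=0$, so two directions must be checked. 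If $\pa_{x_N}\psi(\cdot,0)\not\equiv 0$ on $\pa\Omega$ (a well-defined trace in $H^{2s+\beta-3/2}(\pa\Omega)$, since $2s+\beta>3/2$), it is bounded below in absolute value by some $c>0$ on a positive-measure subset $U\subset\pa\Omega$; continuity up to the boundary then forces $|\pa_{x_N}\psi|\geq c/2$ on a tube $U\times(0,\delta)$ and $\int_U\int_0^\delta x_N^{-2(2s-1)}\,dx_N\,dx'=+\infty$, so \eqref{eq:ccdt} fails. Conversely, when the trace vanishes, the trace-zero weighted Hardy inequality for $H^{2s-1}$ functions on the half-space delivers \eqref{eq:bdggf}. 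Combined with (a), this proves part (ii).

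\textbf{Main obstacle.} The hardest point is (a): showing that $A[\psi]\in L^2(\Omega)$ irrespective of the boundary behavior of $\pa_{x_N}\psi$. The crucial algebraic feature is that the extension correction $\na\tilde\psi(x)\cdot\int_{\Omega^c}\frac{y-x}{|y-x|^{N+2s}}\,dy$ is precisely $-T_2[\psi]$, so that it cancels in $\L=A+T_2$ and concentrates the entire trace-sensitive singularity into $T_2$. Verifying this cancellation rigorously, and controlling the residual Riesz-type integral over $\Omega^c$ in $L^2(\Omega)$ up to the boundary, is what makes the clean dichotomy between cases (i) and (ii) possible.
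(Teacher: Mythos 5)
The decomposition $\L[\psi]=A[\psi]+T_2[\psi]$ with $T_2=Kx_N^{1-2s}\pa_{x_N}\psi$ is indeed the right starting point (it is a slight reshuffling of the paper's three-term split into $I_1,I_2,I_3$), and claim (a) — that $A[\psi]$ lies in $L^2(\Omega)$ unconditionally — is true. The gap is in your proof of (a). The paper establishes it directly and elementarily: the far-field piece $\int_{\Omega\setminus B_1(x)}$ is bounded by a convolution estimate, and the near-field piece $\int_{\Omega\cap B_1(x)}$ with the first-order remainder is bounded via Cauchy–Schwarz against the $H^{2s-1+\beta}$ Gagliardo seminorm of $\na\psi$. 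Your extension-plus-Fourier route, as written, cannot close.

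Concretely, after Seeley-extending $\psi$ and writing $\L=\tilde A-cR$ (where $cR$ is the residual $\gamma\nu_0^{1-2s}\Gamma(2s)\int_{\Omega^c}\frac{y-x}{|y-x|^{N+2s}}\cdot\na\tilde\psi(y)\,dy$, after the $\na\tilde\psi(x)$-piece cancels against $T_2$), you assert that $R$ is controlled in $L^2(\Omega)$ by a Hardy–Littlewood–Sobolev-type estimate using $\na\tilde\psi\in H^{2s-1+\beta}(\RR^N)$. This is impossible unconditionally: if $\tilde A$ and $R$ were both in $L^2(\Omega)$ for every $\psi\in H^{2s+\beta}(\Omega)$, then $\L=\tilde A-cR\in L^2(\Omega)$ unconditionally, flatly contradicting part (ii) of the very proposition you are proving. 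In fact $R$ carries exactly the same $x_N^{1-2s}$-singularity as $T_2$: since a Seeley extension makes $\pa_{x_N}\tilde\psi$ continuous across $\pa\Omega$, for $x_N$ small
\begin{equation*}
R(x)\approx \pa_{x_N}\psi(x',0)\int_{\Omega^c}\frac{y_N-x_N}{|y-x|^{N+2s}}\,dy = -K\,x_N^{1-2s}\,\pa_{x_N}\psi(x',0),
\end{equation*}
which is not square-integrable near $\pa\Omega$ when $s\geq 3/4$ and the Neumann trace is nonzero. The genuine cancellation that saves $A$ is between $cR$ and $T_2$ \emph{as a pair} — the display shows $cR+T_2\to 0$ as $x_N\to 0$ — but your bookkeeping cancels $T_2$ against the $\na\tilde\psi(x)$-piece of $\tilde A-A$ instead, which destroys exactly the cancellation you need and leaves the unbounded $R$ behind. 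To make your route work you would have to group $cR+T_2$ together and quantify the cancellation (e.g. by subtracting the trace value from $\pa_{x_N}\tilde\psi$), which is a nontrivial additional argument; the paper sidesteps this entirely by never splitting off the whole-space part.

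There is also a smaller issue in part (b): you invoke ``continuity up to the boundary'' of $\pa_{x_N}\psi$ to propagate a nonzero trace into a tube. For $\psi\in H^{2s+\beta}(\Omega)$ this requires $2s+\beta-1>N/2$, which is not assumed. The paper instead works at the level of the $L^2(\pa\Omega)$ trace (available since $\na\psi\in H^{2s-1+\beta}$ with $2s-1+\beta>1/2$), combined with the weighted estimate itself; if you prefer the tube argument, you should first restrict to a mollified or smooth dense subclass and then close by density, rather than assume pointwise continuity outright.
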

Before starting the proof of this proposition, we recall the following Hardy inequality
(see  \cite{LossSloane10,BogdanDyda11}):
\begin{theorem}\label{thm:hardy}
Recall that $\Omega$ is the half space $\{(x_1,\dots , x_N)\, ;\, x_N>0\}$.
Then
for all $s\in(0,1)$ there exists a constant $C$ depending only on $s$ and $N$ such that
for all $f\in C_c(\Omega)$:
\begin{equation}\label{eq:hardy} 
\int_\Omega \frac{|f(x)|^2}{|x_N|^{2s}}\d x \leq C_s \| f \|^2_{\dot{H}^s(\Omega)}
\end{equation}
\end{theorem}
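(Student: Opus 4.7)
My plan is to prove the Hardy inequality via the Frank--Seiringer ground-state representation with a power weight $h(x)=x_N^s$ chosen so that the ``ground-state energy'' becomes a positive multiple of $x_N^{-2s}$. First, I would establish the algebraic identity: for any strictly positive $h$ and any $f=uh$ with $u$ sufficiently regular,
\begin{equation*}
\int_\Omega\!\int_\Omega \frac{|f(x)-f(y)|^2}{|x-y|^{N+2s}}\d x\d y = \int_\Omega\!\int_\Omega \frac{(u(x)-u(y))^2 h(x)h(y)}{|x-y|^{N+2s}}\d x\d y + 2\int_\Omega f(x)^2 V_h(x)\d x,
\end{equation*}
where $V_h(x) = h(x)^{-1}\,\mathrm{P.V.}\!\int_\Omega \frac{h(x)-h(y)}{|x-y|^{N+2s}}\d y$. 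This follows from the pointwise identity $(f(x)-f(y))^2 - (u(x)-u(y))^2 h(x)h(y) = (u(x)^2 h(x)-u(y)^2 h(y))(h(x)-h(y))$ together with symmetrization of the double integral in $(x,y)$.

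Next I would compute $V_h$ explicitly for $h(x)=x_N^s$. Writing $y=(y',y_N)$ and integrating out the tangential variables using $\int_{\RR^{N-1}}(|z|^2+\tau^2)^{-(N+2s)/2}\d z = c_{N,s}\,|\tau|^{-1-2s}$, the PV integral collapses to a one-dimensional principal-value integral on $(0,\infty)$. Scaling $y_N = x_N r$ then gives
\begin{equation*}
V_h(x) = \kappa_s\, x_N^{-2s}, \qquad \kappa_s = c_{N,s}\,\mathrm{P.V.}\!\int_0^\infty \frac{1-r^s}{|1-r|^{1+2s}}\d r.
\end{equation*}
Substituting into the ground-state identity and discarding the first, manifestly non-negative term yields
\begin{equation*}
2\kappa_s\int_\Omega \frac{|f(x)|^2}{x_N^{2s}}\d x \le \int_\Omega\!\int_\Omega \frac{|f(x)-f(y)|^2}{|x-y|^{N+2s}}\d x\d y = \|f\|^2_{\dot H^s(\Omega)},
\end{equation*}
which is the claim with $C_s=(2\kappa_s)^{-1}$.

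The main obstacle is the positivity $\kappa_s>0$, which requires a careful handling of the principal value. The strategy is to split the integral at $r=1$, apply the substitution $r\mapsto 1/r$ on $(1,\infty)$ to fold it back onto $(0,1)$, and combine the two pieces to obtain an integrand of the form $(1-r^s)(1-r^{s-1})(1-r)^{-1-2s}r^{2s-1}$ on $(0,1)$, up to a positive multiplicative constant; this integrand is pointwise positive for $s\in(0,1)$, which gives $\kappa_s>0$. A minor technical point is rigor for the ground-state identity when $h(x)=x_N^s$ is only locally integrable near $\partial\Omega$: since $f\in C_c(\Omega)$ has support at positive distance from $\partial\Omega$, the quotient $u=f/h$ also lies in $C_c(\Omega)$, so every integral encountered is absolutely convergent (except for the PV integral defining $V_h$, whose convergence in the PV sense is exactly what the one-dimensional computation above verifies), and no approximation argument is needed.
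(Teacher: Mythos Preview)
The paper does not prove this theorem; it simply cites \cite{LossSloane10,BogdanDyda11}. Your ground-state representation approach is indeed the standard route to the sharp fractional Hardy inequality (it is essentially the argument in the Bogdan--Dyda reference), but your execution contains a genuine error in the choice of weight.

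With $h(x)=x_N^{\alpha}$, the computation you outline gives, after folding $(1,\infty)$ onto $(0,1)$ via $r\mapsto 1/r$,
\[
\kappa(\alpha)=c_{N,s}\int_0^1 \frac{(1-r^{\alpha})(1-r^{\,2s-1-\alpha})}{(1-r)^{1+2s}}\,\d r,
\]
not the expression with an extra $r^{2s-1}$ factor that you wrote. For $r\in(0,1)$ the two factors $(1-r^{\alpha})$ and $(1-r^{\,2s-1-\alpha})$ have the same sign precisely when $\alpha$ and $2s-1-\alpha$ have the same sign, i.e.\ when $\alpha\in(0,2s-1)$ for $s>1/2$, or $\alpha\in(2s-1,0)$ for $s<1/2$. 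Your choice $\alpha=s$ lies \emph{outside} this range for every $s\in(0,1)$: indeed $2s-1-s=s-1<0$ while $s>0$, so the integrand $(1-r^{s})(1-r^{s-1})(1-r)^{-1-2s}$ is pointwise \emph{negative} on $(0,1)$ and $\kappa_s<0$. The inequality you derive then goes the wrong way.

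The fix is to take $h(x)=x_N^{\,s-1/2}$ (the midpoint of the admissible interval, which also yields the sharp constant), or more generally any $\alpha$ strictly between $0$ and $2s-1$. With that correction the rest of your argument goes through.
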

\begin{remark}\label{rem:hardy}
When $s<1/2$, then $ C_c(\Omega)$ is dense in $H^s(\Omega)$ and so \eqref{eq:hardy} holds for all $f\in H^s(\Omega)$.
When $s> 1/2$, the closure of $ C_c(\Omega)$ in $H^s(\Omega)$ is $H^s_0(\Omega)$, the set of functions in $H^s(\Omega)$ whose trace vanishes at the boundary. In that case \eqref{eq:hardy} holds for all $f\in H^s_0(\Omega)$.
\end{remark}
\begin{proof}[Proof of Proposition \ref{prop:L2bd}]
We write
\begin{align}
\L[\psi](x) &  = \gamma \nu_0^{1-2s}\Gamma(2s)  \int_{\Omega\setminus B_{1}(x)}   \frac{y-x}{|y-x|^{N+2s}}  \cdot \na \psi (y)    \d y\nonumber \\ 
 &  \quad + \gamma \nu_0^{1-2s}\Gamma(2s)  \int_{\Omega\cap B_{1}(x)}   \frac{y-x}{|y-x|^{N+2s}}  \cdot [ \na \psi (y)-\na\psi(x)]    \d y\nonumber \\
 &  \quad + \gamma \nu_0^{1-2s}\Gamma(2s) \na\psi(x) \cdot \PV \int_{\Omega\cap B_{1}(x)}   \frac{y-x}{|y-x|^{N+2s}}    \d y \nonumber \\
 & = I_1(x)+I_2(x)+I_3(x).
  \label{eq:ddf}
 \end{align}
The first term in \eqref{eq:ddf} satisfies (since $s\in(1/2,1)$):
\begin{align*}
\int_\Omega |I_1(x)|^2 \, dx 
& \leq C \int_\Omega \left(  \int_{\Omega\setminus B_{1}(x)}   \frac{1}{|y-x|^{N+2s-1}} | \na \psi (y)  |  \d y\right)^2\, dx\\\
& \leq C \int_\Omega   \int_{\Omega\setminus B_{1}(x)}   \frac{1}{|y-x|^{N+2s-1}}  |\na \psi (y)|^2    \d y \, dx\\
& \leq C \int_\Omega    |\na \psi (y)|^2    \d y
\end{align*} 
 For the second term in \eqref{eq:ddf}, we write
\begin{align*}
\int_\Omega |I_2(x)|^2 \, dx 
 & \leq C \int_\Omega \left(   \int_{\Omega\cap B_{1}(x)}   \frac{| \na \psi (y)-\na\psi(x)|}{|y-x|^{N+2s-1}}      \d y\right)^2\, dx \\
 & \leq C \int_\Omega  \int_{\Omega\cap B_{1}(x)}   \frac{| \na \psi (y)-\na\psi(x)|^2}{|y-x|^{N+2(2s-1+\beta)}}      \d y  \int_{\Omega\cap B_{1}(x)} \frac{1}{|y-x|^{N-2\beta}}\, dy\, dx \\
  & \leq C \int_\Omega  \int_{\Omega}   \frac{| \na \psi (y)-\na\psi(x)|^2}{|y-x|^{N+2(2s-1+\beta)}}      \d y   \, dx \\
  & \leq C  \| \na \psi\|^2_{H^{2s-1+\beta}(\Omega)}\leq C \| \psi\|^2_{H^{2s+\beta}(\Omega)}
 \end{align*}
Finally, for the last term in \eqref{eq:ddf}, we note that  
 $$
\PV \int_{\Omega\cap B_1(x)}  \frac{y_i-x_i}{|y-x|^{N+2s}}\d y 
 = \left\{
 \begin{array}{ll}
 0  & \mbox{ if } i=1,\dots ,N-1\\[3pt]
\displaystyle  \int_{|z|<1, \; z_N>-x_N}  \frac{z_N }{|z|^{N+2s}} \d z & \mbox{ if } i=N
\end{array}
\right.
$$
and we have the following Lemma:
\begin{lemma}\label{lem:h}
The function
$$ h(x_N) = \PV  \int_{|z|<1, \; z_N>-x_N} \frac{z_N }{|z|^{N+2s}} \d z , \qquad x_N>0$$
satisfies $h(x_N) =0 $ if $x_N\geq 1$ and 
$$C_1|x_N|^{1-2s} \leq h(x_N)\leq C_2 |x_N|^{1-2s} \qquad \mbox{ for } 0<x_N<1/2
$$
and
$$0\leq h(x_N)\leq C_3\qquad \mbox{ for } 1/2<x_N<1
$$
\end{lemma}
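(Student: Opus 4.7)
The plan is to reduce $h(x_N)$ to an explicit one-dimensional integral by exploiting the odd symmetry of the integrand in $z_N$, and then analyze this integral in the three regimes.

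First, for $x_N \geq 1$, the constraint $z_N > -x_N$ is implied by $|z|<1$, so the domain of integration is the full ball $B_1(0)$. Since $z_N/|z|^{N+2s}$ is odd in $z_N$ and the ball is symmetric under $z_N \mapsto -z_N$, the principal value vanishes.

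For $0 < x_N < 1$, I would split the integration domain as
\begin{align*}
\{|z|<1,\, z_N>-x_N\} = \{|z|<1,\, |z_N|<x_N\} \cup \{|z|<1,\, z_N>x_N\}.
\end{align*}
The first piece is symmetric in $z_N\mapsto -z_N$, so its principal value is $0$ (this is precisely where the PV is needed, to handle the singularity at the origin). Therefore
\begin{align*}
h(x_N) = \int_{|z|<1,\, z_N>x_N} \frac{z_N}{|z|^{N+2s}}\,\d z,
\end{align*}
which is manifestly nonnegative. Next I switch to spherical coordinates $z=\rho\theta$ with $\theta\in S^{N-1}$ and write $\theta_N$ for the last coordinate of $\theta$; using the standard parametrization of $S^{N-1}$, one obtains
\begin{align*}
\int_{\theta_N>x_N/\rho} \theta_N\,\d\sigma(\theta) = \frac{\omega_{N-2}}{N-1}\Bigl(1-(x_N/\rho)^2\Bigr)^{(N-1)/2}
\end{align*}
for $\rho>x_N$ (and zero otherwise). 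Plugging in and making the change of variables $\rho=x_N/t$ gives the closed form
\begin{align*}
h(x_N) = \frac{\omega_{N-2}}{N-1}\, x_N^{1-2s}\int_{x_N}^1 t^{2s-2}(1-t^2)^{(N-1)/2}\,\d t.
\end{align*}

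From this formula the remaining estimates follow readily. Because $s>1/2$, the integral $\int_0^1 t^{2s-2}(1-t^2)^{(N-1)/2}\d t$ converges to some positive constant $I_*$. For $0<x_N<1/2$, the integral from $x_N$ to $1$ lies between $\tfrac12 I_*$ and $I_*$ (up to adjusting constants), which gives the two-sided bound $C_1 x_N^{1-2s}\leq h(x_N)\leq C_2 x_N^{1-2s}$. For $1/2<x_N<1$ the prefactor $x_N^{1-2s}$ is bounded, and the tail integral $\int_{x_N}^1 t^{2s-2}(1-t^2)^{(N-1)/2}\d t$ is bounded above by $I_*$, so $0\leq h(x_N)\leq C_3$. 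The only mildly delicate step is justifying the vanishing of the PV over $\{|z_N|<x_N\}$ rigorously, which amounts to observing that for any $\delta>0$ the odd symmetry cancels the integral over $\{\delta<|z|<1,\,|z_N|<x_N\}$ exactly, so the $\delta\to 0$ limit is $0$.
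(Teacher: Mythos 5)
Your proof is correct and shares the paper's opening move — exploiting the odd symmetry of $z_N/|z|^{N+2s}$ in $z_N$ to kill the principal value over the symmetric region $\{|z|<1,\ |z_N|<x_N\}$ and reduce $h(x_N)$ to the ordinary integral over $\{|z|<1,\ z_N>x_N\}$; this immediately gives $h\geq 0$ for $0<x_N<1$ and $h=0$ for $x_N\geq 1$. The two proofs then diverge in how they evaluate the remaining integral. The paper substitutes $y'=z'/z_N$ for the horizontal variables at fixed $z_N$, arriving at
\[
h(x_N)=\int_{x_N}^1 z_N^{-2s}\left(\int_{1+|y'|^2<z_N^{-2}}\frac{\d y'}{(1+|y'|^2)^{(N+2s)/2}}\right)\d z_N,
\]
and then bounds the inner $y'$-integral from above by extending it to all of $\RR^{N-1}$, and from below by restricting $z_N<3/4$. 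You instead pass to spherical coordinates, evaluate the spherical-cap integral explicitly, and rescale via $\rho=x_N/t$ to obtain the closed form
\[
h(x_N)=\frac{\omega_{N-2}}{N-1}\,x_N^{1-2s}\int_{x_N}^1 t^{2s-2}(1-t^2)^{(N-1)/2}\,\d t,
\]
which isolates the $x_N^{1-2s}$ prefactor and reduces the estimates to monotonicity of a convergent one-dimensional integral (convergent precisely because $2s-2>-1$, i.e.\ $s>1/2$). Both arguments are sound; yours yields a cleaner and more transparent closed form, at the cost of a (standard) spherical-cap computation that the paper sidesteps with rougher inner-integral bounds. One small wording slip: for $0<x_N<1/2$ the integral $\int_{x_N}^1 t^{2s-2}(1-t^2)^{(N-1)/2}\,\d t$ is not literally ``between $\tfrac12 I_*$ and $I_*$''; the precise statement is that it is bounded above by $I_*=\int_0^1 t^{2s-2}(1-t^2)^{(N-1)/2}\,\d t$ and below by $\int_{1/2}^1 t^{2s-2}(1-t^2)^{(N-1)/2}\,\d t>0$. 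This is exactly the estimate you need, so the conclusion stands.
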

Postponing the proof for now, we note that this lemma implies that
$$
 \int_\Omega  |x_N|^{2(1-2s)} 1_{\{x_N\leq 1/2\} } \left|  \pa_{x_N} \psi\right|^2\d x \leq \int_\Omega |I_3(x)|^2 \, dx \leq  \int_\Omega  |x_N|^{2(1-2s)}   1_{\{x_N\leq 1\} } \left|  \pa_{x_N} \psi\right|^2 \d x
$$

Since $I_1$ and $I_2$ in \eqref{eq:ddf} are in $L^2(\Omega)$ when $\psi \in H^{2s+\beta}(\Omega)$, we  deduce that $\L[\psi]$ belongs to $L^2(\Omega)$ if and only if $I_3$ is in $L^2(\Omega) $ as well, which is then equivalent to  \eqref{eq:ccdt}.

We can now complete the proof of the Proposition:
\medskip

\noindent (i) When $2s-1<1/2$ (that is $s<3/4$), Hardy's inequality (see Remark \ref{rem:hardy}) implies 
$$ 
\int_\Omega \frac{\left|  \pa_{x_N}\psi\right|^2}{|x_N|^{2(2s-1)}}\d x \leq C \| \na \psi\|^2_{H^{2s-1}(\Omega)}$$
and so $\L[\psi]\in L^2(\Omega)$ without further conditions  and the bound on $\| \L\psi]\|_{L^2(\Omega)}$ follows from the bounds on $I_1$ and $I_2$ above. 

\medskip

\noindent (ii) When $2s-1\geq 1/2$, then we proved above that $\L[\psi]$ belongs to $L^2(\Omega)$ if and only if  \eqref{eq:ccdt} holds.

Furthermore, 
since  $\na \psi \in H^{2s-1+\beta}(\Omega)$ with $2s-1+\beta>1/2$ we see that $\pa_{x_N} \psi$ has a well defined trace in $L^2(\pa\Omega)$ and \eqref{eq:ccdt} implies that this trace must vanish since $2(2s-1)\geq 1$. Conversely, if $\pa_{x_N} \psi=0$ on $\pa\Omega$, then $\pa _{x_N} \psi$ belongs to $H^{2s-1+\beta}_0(\Omega)$, the closure of $C^\infty_0(\Omega)$. 
Hardy inequality (see Remark \eqref{rem:hardy}) thus implies
\begin{equation}\label{eq:blff}
\int_\Omega \frac{\left|  \pa_{x_N}\psi\right|^2}{|x_N|^{2(2s-1+\beta)}}\d x \leq C \| \na \psi\|^2_{H^{2s-1+\beta}(\Omega)}
\end{equation}
and so \eqref{eq:ccdt}  holds.

We have thus shown that \eqref{eq:ccdt} was equivalent to the condition that $\pa_{x_N} \psi=0$ on $\pa\Omega$.
When this condition holds, then the inequality above gives
$$
\int_\Omega \frac{\left|  \pa_{x_N}\psi\right|^2}{|x_N|^{2(2s-1+\beta)}}\d x \leq C \|  \psi\|^2_{H^{2s+\beta}(\Omega)}$$
and \eqref{eq:bdggf} follows.
\end{proof}
\begin{proof} [Proof of Lemma \ref{lem:h}]

Using symmetry properties, we write:
\begin{align*}
h(x_N)&= \int_{z_N>x_N; \; {z'}^2+z_N^2<1}\frac{z_N}{(z_N^2+{z'}^2)^{\frac{N+2s}{2}}}dz\\
\end{align*}
Thus, by proceeding to the change of variable $y'=\frac{z'}{z_N}$ we find
\begin{align*}
h(x_N)&= \int_{x_N<z_N<1} \frac{z_N}{z_N^{N+2s}}z_N^{N-1}\left(\int_{{y'}^2+1<\frac{1}{{z_N}^2}}\frac{1}{(1+{y'}^2)^\frac{N+2s}{2}}dy'\right)dz_N\\
&\leq  \int_{x_N<z_N<1}z_N^{-2s}dz_N\left(\int_{\RR^{N-1}}\frac{1}{(1+{y'}^2)^\frac{N+2s}{2}}dy'\right)\\
&\leq \frac{C}{2s-1}({x_N}^{1-2s}-1),
\end{align*}
which gives the desired upper bounds for $0<x_N <1$.

On another hand, we clearly have $h(x_N)\geq 0$ and we can also write
\begin{align*}
h(x_N)&\geq  \int_{x_N<z_N<\frac{3}{4}}z_N^{-2s}dz_N\left(\int_{{y'}^2+1<\frac{16}{9}}\frac{1}{(1+{y'}^2)^\frac{N+2s}{2}}dy'\right)\\
&\geq  \frac{C}{2s-1}({x_N}^{1-2s}-(\frac{3}{4})^{1-2s})
\end{align*}
which gives the lower bound when 
$0<x_N<\frac{1}{2}$. 
\end{proof}

We deduce the following Corollary which is useful in the proof of our main theorem:
\begin{corollary}\label{cor:15}
If $\psi \in H^{2s+\beta}(\Omega)$ for some $\beta>0$ and $\L[\psi]\in L^2(\Omega)$, then
\begin{equation}\label{eq:15}
 \delta^{-2(2s-1)}\int_\Omega|  \pa_{x_N} \psi(t,x)|^2 1_{x_N\leq \delta} \d x \leq  \|  \psi\|^2_{H^{2s +\beta}(\Omega)} \delta^{2\beta'}
\end{equation}
for some $\beta'>0$.
\end{corollary}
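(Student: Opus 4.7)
The plan is to reduce the claim to a weighted $L^2$ estimate on $\pa_{x_N}\psi$, and then obtain that estimate by applying the Hardy inequality (Theorem \ref{thm:hardy}) with a carefully chosen fractional exponent $\alpha=2s-1+\beta'$, where $\beta'$ is a small positive parameter to be adjusted below.

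The first step is the elementary observation: for any $\alpha>0$,
\begin{equation*}
\int_\Omega |\pa_{x_N}\psi|^2\,\mathbf 1_{\{x_N\leq\delta\}}\d x
\;\leq\; \delta^{2\alpha}\int_\Omega \frac{|\pa_{x_N}\psi|^2}{|x_N|^{2\alpha}}\d x.
\end{equation*}
Setting $\alpha=2s-1+\beta'$, the desired inequality \eqref{eq:15} reduces to showing
\begin{equation*}
\int_\Omega \frac{|\pa_{x_N}\psi|^2}{|x_N|^{2(2s-1+\beta')}}\d x \;\leq\; C\,\|\psi\|_{H^{2s+\beta}(\Omega)}^2,
\end{equation*}
for some $\beta'\in(0,\beta)$. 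Since $\psi\in H^{2s+\beta}(\Omega)$, we have $\pa_{x_N}\psi\in H^{2s-1+\beta}(\Omega)\hookrightarrow H^{2s-1+\beta'}(\Omega)$, so Hardy's inequality with exponent $\alpha=2s-1+\beta'$ is exactly what is needed—provided $\pa_{x_N}\psi$ belongs to the correct Hardy class (cf.\ Remark \ref{rem:hardy}).

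I would then split according to the two cases of Proposition \ref{prop:L2bd}. When $s\in(1/2,3/4)$, one can choose $\beta'>0$ so small that $\alpha=2s-1+\beta'<1/2$; in that regime no boundary vanishing is required, and Hardy applies directly to any $H^\alpha(\Omega)$ function. When $s\in[3/4,1)$, the assumption $\L[\psi]\in L^2(\Omega)$ together with Proposition \ref{prop:L2bd}(ii) forces $\pa_{x_N}\psi=0$ on $\pa\Omega$. Since $\pa_{x_N}\psi\in H^{2s-1+\beta'}(\Omega)$ with $2s-1+\beta'>1/2$ and vanishing trace, the standard characterization of $H^\sigma_0$ for $\sigma\in(1/2,3/2)$ (not an integer) places $\pa_{x_N}\psi$ in $H^{2s-1+\beta'}_0(\Omega)$, so Remark \ref{rem:hardy} again allows the Hardy inequality to be invoked. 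Combining the resulting weighted bound with the weight trick above yields
\begin{equation*}
\int_\Omega |\pa_{x_N}\psi|^2\,\mathbf 1_{\{x_N\leq\delta\}}\d x \;\leq\; C\,\delta^{2(2s-1)+2\beta'}\,\|\psi\|^2_{H^{2s+\beta}(\Omega)},
\end{equation*}
which rearranges to \eqref{eq:15}.

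The main potential obstacle is the boundary-trace step in the regime $s\in[3/4,1)$: one must ensure that the pointwise vanishing of the trace of $\pa_{x_N}\psi$ (provided by Proposition \ref{prop:L2bd}) is compatible with membership in $H^{\sigma}_0$ for $\sigma=2s-1+\beta'$ slightly above $1/2$, and that the case $\sigma=1/2$ is avoided. Choosing $\beta'\in(0,\min(\beta,1/2))$ carefully so that $2s-1+\beta'\neq 1/2$ sidesteps the exceptional value, and the characterization $H^\sigma_0(\Omega)=\{u\in H^\sigma(\Omega): u|_{\pa\Omega}=0\}$ for $\sigma\in(1/2,3/2)\setminus\{1/2\}$ applies. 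The constant $C$ then depends only on $s,N,\beta,\beta'$, and can be absorbed (or displayed) in \eqref{eq:15} without affecting the stated form once $\beta'$ is fixed.
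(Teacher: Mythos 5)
Your proposal is correct and follows essentially the same two-step structure as the paper's proof: the elementary weight bound $\delta^{-2(2s-1+\beta')}\mathbf 1_{\{x_N\leq\delta\}}\leq|x_N|^{-2(2s-1+\beta')}$, then Hardy's inequality at exponent $2s-1+\beta'$, with the same case split at $s=3/4$ (shrinking $\beta'$ so the exponent drops below $1/2$ when $s<3/4$; invoking the vanishing trace of $\pa_{x_N}\psi$ from Proposition~\ref{prop:L2bd}(ii) and Remark~\ref{rem:hardy} when $s\geq 3/4$). The only cosmetic difference is that the paper takes $\beta'=\beta$ and cites the already-established bound \eqref{eq:blff} in the $s\geq 3/4$ regime, whereas you re-derive the weighted estimate with a possibly smaller $\beta'$; both are equally valid.
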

\begin{proof}
When $2s-1<1/2$ (that is $s<3/4$), we can take $\beta'<\beta$ such that $2s-1+\beta'<1/2$
and Hardy's inequality implies 
$$ 
\int_\Omega |x_N|^{-2(2s-1+\beta')}  \left|  \pa_{x_N}\psi\right|^2 \d x \leq C \| \na \psi\|^2_{H^{2s-1+\beta'}(\Omega)}\leq C  \|  \psi\|^2_{H^{2s +\beta}(\Omega)} .$$
Since $ \delta^{-2(2s-1+\beta')} \leq |x_N|^{-2(2s-1+\beta')}$ when $x_N\leq \delta$, we deduce
$$ \delta^{-2(2s-1+\beta')} \int_\Omega |  \pa_{x_N} \psi(t,x)|^2 1_{x_N\leq \delta} \d x \leq C   \|  \psi\|^2_{H^{2s +\beta}(\Omega)} $$
and \eqref{eq:15} follows.

When $2s-1\geq 1/2$, \eqref{eq:15} (with $\beta'=\beta$) follows by a similar computation using \eqref{eq:blff}.
\end{proof}

Finally, we prove the following integration by part formula for $\div D^{2s-1}$ (that we will prove to be $\L$):
\begin{proposition}\label{prop:ipp}
Let $\psi$ and $\vphi$ be functions  in $H^{2s+\beta}(\Omega)$ for some $\beta>0$ such that  $\L[\psi]$ and $\L[\vphi] \in L^2(\Omega) $. Then
the following integration by parts formula holds:
\begin{equation}\label{eq:ipp}
\int_{\Omega} \div D^{2s-1}[\vphi] \psi \d x - \int_{\Omega} \vphi \, \div D^{2s-1} [\psi]\d x  = \int_{\pa \Omega} \left[\psi D^{2s-1}[\vphi]\cdot n -
 \vphi D^{2s-1}[\psi]\cdot n \right]\d S(x).
\end{equation}
\end{proposition}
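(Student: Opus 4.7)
The plan is to reduce the claimed nonlocal integration-by-parts formula to a combination of the classical Gauss--Green theorem applied to the vector fields $D^{2s-1}[\vphi]$ and $D^{2s-1}[\psi]$, plus a symmetry identity for the bilinear form $(\vphi,\psi)\mapsto\int_\Omega D^{2s-1}[\vphi]\cdot\na\psi\,dx$. Using that $\div D^{2s-1}=\L$ (Lemma~\ref{lem:opL2}) and the divergence theorem, one expects
\begin{equation*}
\int_\Omega \div D^{2s-1}[\vphi]\,\psi\,\d x \;=\; -\int_\Omega D^{2s-1}[\vphi]\cdot\na\psi\,\d x\;+\;\int_{\pa\Omega}\psi\,D^{2s-1}[\vphi]\cdot n\,\d S(x),
\end{equation*}
and the analogous identity with the roles of $\vphi$ and $\psi$ exchanged. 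Subtracting these two identities, the claim \eqref{eq:ipp} collapses to the symmetry statement
\begin{equation*}
\int_\Omega D^{2s-1}[\vphi]\cdot\na\psi\,\d x \;=\; \int_\Omega D^{2s-1}[\psi]\cdot\na\vphi\,\d x.
\end{equation*}

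The symmetry identity is a direct consequence of Fubini's theorem. Inserting the definition \eqref{eq:D0} produces the double integral
\begin{equation*}
\int_\Omega D^{2s-1}[\vphi]\cdot\na\psi\,\d x \;=\; \gamma\nu_0^{1-2s}\Gamma(2s-1)\iint_{\Omega\times\Omega}\frac{[(y-x)\cdot\na\vphi(y)]\,[(y-x)\cdot\na\psi(x)]}{|y-x|^{N+2s}}\,\d y\,\d x,
\end{equation*}
whose integrand is manifestly invariant under the swap $x\leftrightarrow y$ (each factor $y-x$ changes sign, and the two signs cancel). Hence the integral coincides with its $\psi\leftrightarrow\vphi$ counterpart. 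The integrand is dominated by $|\na\vphi(y)|\,|\na\psi(x)|\,|y-x|^{2-(N+2s)}$, i.e.\ a Riesz kernel of order $2-2s\in(0,1)$, so the regularity $\na\vphi,\na\psi\in H^{2s-1+\beta}(\Omega)\hookrightarrow L^2(\Omega)$ (combined with Proposition~\ref{prop:DbdP} to control the kernel contribution) guarantees absolute convergence and legitimizes Fubini.

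The main obstacle will be justifying the nonlocal Gauss--Green step rigorously, since $D^{2s-1}[\vphi]$ is neither a classical gradient nor \emph{a priori} continuous up to $\pa\Omega$. The ingredients needed are: (i) $D^{2s-1}[\vphi]\in L^1_{\mathrm{loc}}(\Omega)$ as a vector field, which comes from Proposition~\ref{prop:DbdP}; (ii) its distributional divergence equals $\L[\vphi]\in L^2(\Omega)$, which is Lemma~\ref{lem:opL2}; and (iii) a well-defined normal trace on $\pa\Omega=\{x_N=0\}$. For the last point, one notes that the pointwise formula \eqref{eq:D0} is actually well-defined for every $x\in\overline\Omega$ whenever $\na\vphi\in L^\infty\cap C^\gamma$, since the kernel's singularity is of order $|y-x|^{-(N+2s-2)}$, which is locally integrable for $s<1$; under the hypothesis $\vphi\in H^{2s+\beta}(\Omega)$ with $\L[\vphi]\in L^2(\Omega)$, Proposition~\ref{prop:L2bd} furnishes enough regularity to define $D^{2s-1}[\vphi]\cdot n$ in a suitable trace sense. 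The cleanest route is then to prove \eqref{eq:ipp} first for $\vphi,\psi\in C_c^\infty(\overline\Omega)$ on the truncated domain $\Omega\cap B_R$ (where every object is classical and the boundary integral at $\{|x|=R\}$ vanishes as $R\to\infty$ thanks to the decay of $D^{2s-1}$), and then to approximate general $\vphi,\psi$ satisfying the hypotheses of the proposition by smooth functions, passing to the limit in each term using the continuity estimates of Propositions~\ref{prop:DbdP} and~\ref{prop:L2bd} and the fractional Sobolev trace theorem on $\pa\Omega$.
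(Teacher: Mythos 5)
Your proof is correct and follows essentially the same route as the paper: apply the divergence theorem to reduce \eqref{eq:ipp} to the symmetry of $\int_\Omega D^{2s-1}[\vphi]\cdot\na\psi\,\d x$ in $\vphi$ and $\psi$, then establish that symmetry by rewriting the left-hand side via \eqref{eq:D0} as the manifestly $x\leftrightarrow y$ symmetric double integral $\gamma\nu_0^{1-2s}\Gamma(2s-1)\iint_{\Omega\times\Omega}\frac{[(y-x)\cdot\na\vphi(y)][(y-x)\cdot\na\psi(x)]}{|y-x|^{N+2s}}\,\d y\,\d x$. The paper's proof is terser and does not spell out the justification of the Gauss--Green step or the trace of $D^{2s-1}[\vphi]\cdot n$ on $\pa\Omega$, but the underlying argument is the same.
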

Note that we can also prove that  this formula holds when $\vphi$ and $\psi$ are in $C^{1,\gamma}(\Omega)$ for some $ \gamma>2s-1$ and satisfies the Neumann condition $\pa_{x_N}\vphi = \pa_{x_N}\psi =0 $ on $\pa\Omega$
(see Proposition \ref{prop:Linfbd}).

\begin{proof}[Proof of Proposition \ref{prop:ipp}]
Integrating by parts, we find:
\begin{equation} \label{eq:ipp1}
\int_{\Omega}  \div D^{2s-1}[\vphi]  \psi\d x = - \int_{\Omega} D^{2s-1}[\vphi]\cdot \na \psi \d x 
 + \int_{\pa \Omega} \psi D^{2s-1}[\vphi]\cdot n \d S(x).
\end{equation}
So, formula \eqref{eq:ipp} follows from the following equality:
$$
 \int_{\Omega} D^{2s-1}[\vphi]\cdot \na \psi \d x =  \int_{\Omega} D^{2s-1}[\psi]\cdot \na \vphi \d x.
$$
This equality is easily proved using the formula \eqref{eq:D0}  for the operator $D^{2s-1}$ since it  gives the following symmetric expression:
\begin{equation}\label{eq:ipp3} 
 \int_{\Omega} D^{2s-1}[\vphi]\cdot \na \psi \d x =  \gamma \nu_0^{1-2s} \Gamma(2s-1)
 \int_{\Omega}   \int_\Omega (y-x)\cdot \na \vphi(y)  \frac{y-x}{|y-x|^{N+2s}}\cdot \na \psi(x) \, dy  \d x. 
 \end{equation}

\end{proof}

%

\section{Proof of Theorem \ref{thm:main}} \label{sec:rig}
In this section, we rigorously prove  the limit presented in the previous section in a particular case: we assume that $\Omega=\RR^N_+$ is the upper-half space and that the collision cross-section is constant, so that
\begin{equation}\label{eq:Q0} 
Q(f)(v)=\nu_0 (\rho F(v)-f(v)), \qquad \rho = \int_{\RR^N} f(v)\d v.
\end{equation}
Furthermore, we assume that $F$ satisfies
\begin{equation}\label{eq:F}
\begin{cases}
\displaystyle F(v) \in L^\infty, \qquad \int_{\RR^N} F(v)\d v =1, \qquad F(v)=F(-v) \\[5pt]
\left| F(v)-\frac{\gamma}{|v|^{N+2s}} \right|\leq \frac{C}{|v|^{N+4s}} \qquad\mbox{ for all } |v|\geq 1.
\end{cases}
\end{equation}
These assumptions on the equilibrium $F$ are motivated by the equilibrium of the fractional Fokker-Planck operator studied in \cite{CesbronMelletTrivisa12}. 
\medskip

The basic idea of the proof is to rigorously pass to the limit in \eqref{eq:weakeps} (note that when $Q$ is given by \eqref{eq:Q0}, the last term in \eqref{eq:weakeps}  vanishes).
To do this, we would like to solve \eqref{eq:Phieps}-\eqref{eq:Phibceps} explicitly, which is difficult because of the boundary condition. 
Instead, we will construct solutions of the following equation
\begin{equation}\label{eq:Phiepsff}
\begin{cases}
\nu_0 \phi^\eps - \eps v \cdot \na_x\phi^\eps  = \nu_0 \psi(x) \qquad & \mbox{ in } \Omega\times\RR^N\\
\gamma_+ \phi^\eps = \psi(x) & \mbox{ on } \Sigma_+.
\end{cases}
\end{equation}
The function $\phi^\eps$ then satisfies the boundary condition \eqref{eq:Phibceps} if and only if (see Lemma \ref{lem:test} below)
\begin{equation}\label{eq:ope}
\int_{\RR^N}   v F(v) \big[\phi^\eps(t,x,v)-\psi(t,x)\big] \, \d v  \cdot n(x) =0\; , \qquad \mbox{ for all } x\in \pa\Omega,
\end{equation}
which leads us to introduce the following operator
\begin{equation}\label{eq:defDDeps} D^{2s-1}_\eps [\psi](x):= \eps^{1-2s}\int_{\RR^N}   v F(v) \big[\phi^\eps(t,x,v)-\psi(t,x)\big] \, \d v , \qquad x\in\Omega\end{equation}
(note that this operator coincides with the operator $\tilde  D^{2s-1}_\eps$ of the previous section when $\psi$ satisfies \eqref{eq:ope}, but is otherwise different).

However, condition \eqref{eq:ope} depends on $\eps$, so this approach would require us to consider a sequence of test function $\psi^\eps$ satisfying \eqref{eq:ope} and converging to $\psi$ when $\eps\to0$. Since the existence of such a sequence is not clear, we will instead fix a function $\psi$ such that
 $$ \lim_{\eps\to 0} D^{2s-1}_\eps [\psi]\cdot n=0 $$
and show that the corresponding function $\phi^\eps$, solution of \eqref{eq:Phiepsff} can be approximated by a function satisfying the boundary conditions  \eqref{eq:Phibceps}. 
This last approximation is the main reason why we only prove our result when $\Omega$ is the upper-half space since the construction is significantly simpler in that case.

In the next section, we introduce an extension of $\psi$ to $\RR^N\times\RR^N$ which will lead to an explicit formula for the solution of  \eqref{eq:Phiepsff}.
We will then proceed with the  proof of our main theorem with, in particular, the proof of the convergence of the operators $D^{2s-1}_\eps$ and $\L^\eps$.

\subsection{Construction of the test functions}   \label{sec:approxtest}
Our first task is to explicitly solve equation \eqref{eq:Phiepsff} for a given a test function $\psi(t,x)$ defined in $[0,\infty)\times \overline \Omega$.
To do that, we first define an extension $ 
\wpsi (t,x,v) $ of $\psi$ to $[0,\infty)\times\RR^N\times\RR^N$ 
by setting
\begin{equation}\label{eq:defwpsi00}
\wpsi (t,x,v) = \psi(t,x) \mbox{ for all $x\in\overline \Omega$ and all $v\in \RR^N$}
\end{equation}
and assuming that $\wpsi(x,v,t)$ solves
\begin{equation}\label{eq:defwpsi01}
\left\{
\begin{aligned}
 v\cdot \na _x \wpsi(t,x,v) = 0 & \qquad \mbox{ in } (\RR^N\setminus \Omega) \times\RR^N \\
 \wpsi(t,x,v) = \psi(t,x) &\qquad \mbox{ on } \Sigma_+.
\end{aligned}
\right.
\end{equation}
This equation states that  for fixed $(t,v)$, the function $x\mapsto \wpsi(t,x,v) $ is constant along the characteristic lines $\tau \mapsto x+\tau v$ outside the set $\Omega$.
Since not all characteristic lines will intersect $\Sigma_+$, this does not define $\wpsi(x,v)$ uniquely everywhere. However, we will see that the ambiguous points do not play any role in the sequel, so we can set $\wpsi(x,v)$ to be zero there.
We note the following obvious but important facts about this extension
\begin{enumerate}
\item  For any $x\in \pa\Omega$, and any $v\in\RR^N$ such that $v\cdot n(x)>0$, we have
\begin{equation}\label{eq:wpsichar}
\wpsi(t,x+\tau v,v) = \psi(t,x) \qquad \forall \tau\geq0 .
\end{equation}
\item If $x\in \overline \Omega$, then $ \wpsi(t,x,\tau v) = \psi(t,x) = \wpsi(t,x,v)$ for any $v\in\RR^N$ and any $\tau \in\RR$.
If $x\in\RR^N\setminus \Omega$,  
since the boundary condition in \eqref{eq:defwpsi01} does not depend on $v$, the function $\tau \mapsto \wpsi(t,x,\tau v)$ is constant for any $v\in \RR^N$ (check, for instance, that $\wpsi(t,x,\tau v)$ is also a solution of \eqref{eq:defwpsi01}). We deduce
$$
 \wpsi(t,x,\tau v) =  \wpsi(t,x,v) \qquad \forall \tau >0, \quad \forall (t,x,v)\in (0,\infty)\times \RR^N\times \RR^N.
$$
\item This construction is useful for any convex set $\Omega$, but when $\Omega$ is the upper-half plane, we can get the following explicit formula:
\begin{equation}\label{eq:fhhr} \wpsi(t,x,v) = 
\begin{cases}
\psi(t,x) & \mbox{ if } x_N>0 \\
\psi\left(t,x'-\frac{x_N}{v_N} v',0\right) & \mbox{ if } x_N<0, \quad v_N<0
\end{cases} 
\end{equation}
As noted above, this does not define $ \wpsi(t,x,v)$ for $x\notin\Omega$ and $v_N>0$, but these value do not play any role in what follows.
We also have
\begin{equation}\label{eq:sim}
\wpsi(y,y-x)=\left\{ \begin{array}{ll} \psi(x) & \mbox{ if } y_N<0\\ \psi(y) & \mbox{ if } y_N >0 \end{array}\right. 
\qquad \mbox{ for all }x\in\pa\Omega.
\end{equation}

\item Even of $\psi$ is smooth, we do not expect $\wpsi$ to be regular near $\pa\Omega$.
For example, the function $x\mapsto \wpsi(x,v)$ is $C^0$ (but typically not $C^1$) at a point $x_0\in\pa\Omega$ only if $n(x_0)\cdot v>0$.
Nevertheless, it is difficult to show that if $x\mapsto \psi(x)$ is in $C^\alpha(\Omega)$ for some $\alpha\in(0,1)$, then we have 
\begin{equation}\label{eq:psilip}
|\wpsi(y,y-x) - \psi(x) |\leq C|y-x|^\alpha \qquad\forall x\in \Omega, \; y\in\RR^N.
\end{equation}
\end{enumerate}

We now have the following Lemma:
\begin{lemma}\label{lem:test}
For any test function $\psi(t,x)$ defined in $[0,\infty)\times \overline \Omega$, the function
\begin{equation}\label{eq:phiepsdef} \phi^\eps(t,x,v) =  \int_0^\infty e^{-\nu_0 z}\nu_0 \wpsi (t,x+\eps v z,v)\d z
\end{equation}
solves \eqref{eq:Phiepsff}.
Furthermore, $\phi^\eps$ satisfies the boundary condition \eqref{eq:Phibceps} if and only if $\psi$ is such that
\begin{equation}\label{eq:psibc2}
\int_{\RR^N}   v F(v) \big[\phi^\eps(t,x,v)-\psi(t,x)\big] \, \d v  \cdot n(x) =0\; , \qquad \mbox{ for all } x\in\pa\Omega.
\end{equation}
\end{lemma}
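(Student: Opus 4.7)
The plan is to verify each of the three claims by direct computation from the explicit formula \eqref{eq:phiepsdef}, relying on two facts about the extension: $\wpsi$ equals $\psi$ throughout $\Omega\times\RR^N$, and by \eqref{eq:wpsichar} it is constant along outgoing rays from $\pa\Omega$.

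For the transport PDE, fix $(x,v)\in\Omega\times\RR^N$ and observe that for frozen $v$,
$$v\cdot\na_x\wpsi(t,x+\eps v z,v) = \frac{1}{\eps}\frac{d}{dz}\wpsi(t,x+\eps v z,v).$$
Differentiating under the integral sign in \eqref{eq:phiepsdef} and integrating by parts in $z$ yields
$$\eps v\cdot\na_x\phi^\eps(t,x,v) = \nu_0\phi^\eps(t,x,v) - \nu_0\wpsi(t,x,v),$$
and since $x\in\Omega$ the last term equals $\nu_0\psi(t,x)$ by \eqref{eq:defwpsi00}, giving the first line of \eqref{eq:Phiepsff}. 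For the outflow condition, fix $(x,v)\in\Sigma_+$; property \eqref{eq:wpsichar} gives $\wpsi(t,x+\eps v z,v) = \psi(t,x)$ for every $z\geq 0$, so \eqref{eq:phiepsdef} evaluates to $\psi(t,x)\int_0^\infty\nu_0 e^{-\nu_0 z}\,dz = \psi(t,x)$.

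For the equivalence, I would use $\gamma_+\phi^\eps = \psi$ together with the normalization of $\alpha_0$. Since $\mathcal{B}^*[\gamma_-\phi^\eps]$ is independent of $v$, the condition \eqref{eq:Phibceps} is the scalar identity
$$\psi(t,x) = \alpha_0\int_{w\cdot n(x)<0}|w\cdot n(x)|F(w)\phi^\eps(t,x,w)\,dw \qquad\text{for all } x\in\pa\Omega.$$
On the other side, $F$-symmetry forces $\int_{\RR^N}vF(v)\,dv = 0$, so \eqref{eq:psibc2} is equivalent to $\int_{\RR^N}(v\cdot n(x))F(v)\phi^\eps\,dv = 0$. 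Splitting this integral on $v\cdot n(x)>0$ and $v\cdot n(x)<0$, replacing $\phi^\eps$ by $\psi$ on the outgoing half via the condition just proved, and using the identity $\alpha_0\int_{v\cdot n>0}(v\cdot n)F(v)\,dv = 1$ (which follows from $F$-symmetry and the normalization of $\alpha_0$), this rearranges into precisely the scalar identity above. Every step is reversible, establishing the equivalence.

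The one technical subtlety is the integration by parts in the first step: I must justify that $z\mapsto\wpsi(t,x+\eps v z,v)$ is absolutely continuous. Setting $z_0 = -x_N/(\eps v_N)$ when $v_N<0$ and $z_0 = +\infty$ otherwise, this map equals $\psi(t,x+\eps v z)$ on $[0,z_0]$ and is frozen to the constant value $\psi(t,x+\eps v z_0)$ on $[z_0,\infty)$ by the construction \eqref{eq:defwpsi01}, so continuity at $z_0$ and the needed regularity on the ray follow from smoothness of $\psi$ on $\overline\Omega$. The measure-zero set of directions $v$ whose rays never hit $\Sigma_+$ contributes nothing to any of the integrals involved.
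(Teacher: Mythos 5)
Your proof is correct and follows essentially the same route as the paper: direct verification of the transport equation, the outflow trace $\gamma_+\phi^\eps=\psi$ via \eqref{eq:wpsichar}, and the equivalence via the normalization of $\alpha_0$ together with the just-established outflow identity. The only cosmetic differences are that you spell out the integration by parts in $z$ (which the paper dismisses as "easily checked") and that you run the equivalence backward from \eqref{eq:psibc2} to \eqref{eq:Phibceps} using $\int vF(v)\,dv=0$, whereas the paper proceeds forward by simply extending the half-space integral over $\{w\cdot n<0\}$ to all of $\RR^N$ using $\phi^\eps=\psi$ on the outgoing half — both routes are sound and rest on the same ingredients.
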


With $\phi^\eps$ is given by \eqref{eq:phiepsdef}, the operator defined in (\ref{eq:defDDeps}) becomes:
\begin{equation}\label{eq:DDss}
D^{2s-1}_\eps[\psi](x) = \eps^{1-2s} \int_{\RR^N}   \int_0^\infty e^{-\nu_0 z} \nu_0   v F(v) [\wpsi (x+\eps v z,v) -\psi(x)]\, dz\d v.
\end{equation}
We also introduce the limiting operator 
\begin{equation}
\label{eq:D00} 
 D^{2s-1} [\psi] (x):= \gamma \nu_0^{1-2s}\Gamma(2s)  \int_{\RR^N}  \big[\wpsi(y,y-x)-\psi(x)\big] \frac{y-x}{|y-x|^{N+2s}} \,dy .
\end{equation}
We will prove in Proposition \ref{prop:d2s} below that $D^{2s-1}_\eps [\psi]$ converges to $D^{2s-1} [\psi]$ and that \eqref{eq:D00} is equivalent to the formula  \eqref{eq:D0} given in the introduction (and which does not involve the extension $\wpsi$).

\begin{proof}[Proof of Lemma \ref{lem:test}]
Using \eqref{eq:defwpsi00}, we easily check that $\phi^\eps$ satisfies \eqref{eq:Phieps},
and using \eqref{eq:wpsichar} (this is where the definition of the extension $\wpsi$ is crucial), we see that for all $x\in\pa\Omega$ and $v$ such that  $v\cdot n(x)>0$, we have:
$$ \gamma_+ \phi^\eps  (t,x,v)  =  \int_0^\infty e^{-\nu_0 z} \nu_0 \wpsi (t,x+\eps v z,v)\d z  = \int_0^\infty e^{-\nu_0 z} \nu_0 \psi (t,x)\d z  = \psi(t,x).$$
which is the boundary condition in \eqref{eq:Phiepsff}.
Next, we note that the boundary condition \eqref{eq:Phibceps} is satisfied if and only if
$$ 
 \gamma_+ \phi^\eps =\psi(x,t) = \int_{w\cdot n(x)<0 }\alpha_0 F(w) \phi^\eps(x,v) |w\cdot n(x)|  \d w \quad\mbox{ for all } x\in\pa\Omega.
$$
Using the normalization condition $\int_{w\cdot n(x)<0 }\alpha_0 F(w)|w\cdot n(x)|  \d w=1$, we can rewrite this condition as
\begin{equation}\label{eq:psibc}
\int_{w\cdot n(x)<0}   w F(w) \big[\phi^\eps(t,x,v)-\psi(t,x)\big] \d w\cdot n(x) =0\; , \qquad \mbox{ for all } x\in\pa\Omega.
\end{equation}
Finally, since $\phi^\eps  (t,x,v) = \psi(t,x)$ when $x\in\pa\Omega$ and  $v\cdot n(x)> 0$, 
 we can extend the integral over all $w\in\RR^N$ and  write this condition as \eqref{eq:psibc2}.
\end{proof}
\medskip

Since equation \eqref{eq:psibc2} depends on $\eps$ and we want to work with a fixed $\psi$, we will assume that $\psi$ satisfies the limiting Neumann boundary condition  
 \eqref{eq:psineumann}: 
$$
D^{2s-1} [\psi] (t,x) \cdot n(x) = 0 \qquad \text{ for  all } x\in \dO.
$$
%
While this implies that $\psi$ almost satisfies \eqref{eq:psibc2} for small $\eps$, it is not enough since we need $\phi^\eps$  to satisfy  \eqref{eq:Phibceps} in order to take it as a test function in  \eqref{eq:weak0}.
We will thus now approximate $\phi^\eps$ by a new function $\phi_0^\eps$ which is an exact solution of the boundary condition \eqref{eq:psibc2} (but an approximated solution of the transport equation\eqref{eq:Phieps}).

This construction is simpler when $\Omega$ is the upper-half space so we restrict ourselves to this case from now on. 
In particular, using \eqref{eq:sim} and \eqref{eq:D00}, the Neumann boundary condition \eqref{eq:psineumann} can then  be written as
\begin{align*} 
\int_{\RR^N_+}  \big[\psi(y)-\psi(x)\big] \frac{(y-x)\cdot n}{|y-x|^{N+2s}} \,dy=0   \qquad \text{ for  all } x=(x',0)\in \dO
\end{align*}
or equivalently
\begin{align} \label{eq:limitBCpsi}
\int_{\RR^N_+}  \big[\psi(x+y)-\psi(x)\big] \frac{y\cdot n}{|y|^{N+2s}} \,dy=0   \qquad \text{ for  all } x=(x',0)\in \dO
\end{align}
where the outward normal vector $n$ is given by $n=(0,\dots,0,-1)$.

We now introduce the following approximation of $\phi^\eps$:
\begin{align}\label{eq:newphi}
\phi_0^\eps (t,x,v) =  \phi^\eps (t,x,v) + T^\eps(t,x) \chi(v).
\end{align}
where $\chi$ is a smooth function compactly supported in $\RR^N_+$, satisfying
$$ \chi(v)=0 \mbox{ if } v_N <0, \quad \mathcal B^*[\chi](v)=\int_{w\cdot n< 0}  \alpha_0 F(w) \chi(w) |w\cdot n| \d w  = 1.$$
Using \eqref{eq:sim}, we see that the function $\phi^\eps (t,x,v)$ satisfies the boundary condition \eqref{eq:Phibceps} if and only if
\begin{align*}
\psi(t,x) & =\mathcal B^*[ \phi^\eps (t,x,\cdot)+T^\eps(t,x) \chi(v)] \\
& = \mathcal B^*[ \phi^\eps (t,x,\cdot) ] + T^\eps(t,x)  \mathcal B^*[\chi(v)]\\
& =\mathcal B^*[\phi^\eps (t,x,\cdot) ] + T^\eps(t,x)  \qquad \text{ for  all } x=(x',0)\in \dO.
\end{align*}
So $T^\eps(t,x)$ must satisfy
\begin{align*}
T^\eps (t,x) & = - \int_{w\cdot n<0}\alpha_0 F(w) (\phi^\eps (t,x,v) -\psi(x))  |w\cdot n | \d w \nonumber \\
& = 
\eps^{2s-1} \alpha_0 D^{2s-1}_\eps [\psi] (t,x) \cdot n 
 \qquad \forall x=(x',0)\in\pa\Omega.
\end{align*}
We thus define
$$\oTe(t,x') := D^{2s-1}_\eps [\psi] (t,x',0) \cdot n \quad \mbox{ for  } x'\in\RR^{N-1}$$
and set
\begin{equation}\label{eq:defT} 
T^\eps(t,x) =\eps^{2s-1} \alpha_0 \oTe(t,x') e^{-{x_N}^2} \quad \mbox{ for  } x\in\Omega.
\end{equation}
Note that the addition of the corrector $ T^\eps(t,x) \chi(v)$ in \eqref{eq:newphi} guarantees that the function $\phi_0^\eps$ satisfies the boundary condition  \eqref{eq:Phibceps}, but it no longer satisfies
the transport-like equation \eqref{eq:Phieps}. 
However, we will show in Proposition \ref{prop:corrector} that $T^\eps$ goes to zero as $\eps\to0$.
The following remark ensures that it is still an admissible test function in the sense of Definition \ref{def:weak}:
\begin{remark}
The definition of $\phi^\eps$, \eqref{eq:phiepsdef}, implies
\begin{align*}
\underset{\Omega \times\RR^N}{\iint} |\phi^\eps(t,x,v)|^2 F(v)\d x\d v
& \leq \underset{\Omega \times\RR^N}{\iint} \int_0^\infty e^{-\nu_0 z} \nu_0 |\wpsi^\eps(t,x+\eps z v,v)|^2 F(v)\d z\d x\d v
\end{align*}
Using \eqref{eq:L2bdt} (proved below), we deduce
\begin{align*}
\underset{\Omega \times\RR^N}{\iint} |\phi^\eps(t,x,v)|^2 F(v)\d x\d v
& \leq \int_{\RR^N} \int_0^\infty e^{-\nu_0 z} \nu_0 C(\psi) (1+|v_N|) F(v)\d v\\
& \leq  C(\psi)
\end{align*}
for some constant $C(\psi)$ depending on $\|\psi\|_{L^2(\RR_+\times\RR^N_+)}$ and $\|\psi|_{\pa\Omega}\|_{L^2(\RR_+\times\RR^{N-1})}$.
A similar bound holds for $\pa_t\phi^\eps$ since $t$ is a parameter in the definition of $\phi^\eps$.
Furthermore, Equation \eqref{eq:Phieps} then implies that
$$ \underset{\Omega \times\RR^N}{\iint} |\eps v\cdot \na_x \phi^\eps(t,x,v)|^2 F(v)\d x\d v <\infty$$
From there, it is easy to check that  we can indeed take $\phi_0^\eps$ as a test function in \eqref{eq:weak0}.
\end{remark}

We can now proceed as in Section \ref{sec:formal}: We take  the  function $\phi_0^\eps$ constructed above as test function in the weak formulation  \eqref{eq:weak0} of \eqref{eq:kin}.
We obtain:
\begin{align}
&\underset{\RR_+\times\Omega \times\RR^N}{\iiint} f^\eps  \pa_t \phi^\eps   \d t \d x \d v  + \underset{\RR_+\times\Omega}{\iint} \rho^\eps \mathcal L^\eps [\psi] \d t \d x \d v \label{eq:wftildepsi} \\
&+ \underset{\RR_+\times\Omega\times\RR^N}{\iiint} f^{\eps} \Big( \pa_t T^\eps(t,x) \chi(v) + \eps^{1-2s} v \cdot \na_x  T^\eps(t,x) \chi(v) - \eps^{-2s} Q^* \big[ \chi(v) \big] T^\eps(t,x)  \Big) \d t \d x \d v  \label{eq:wfCorrection} \\
&+ \underset{\Omega\times\RR^N}{\iint} f_{in} (x,v) \phi^\eps(0,x,v) \d x \d v =0  \label{eq:wfinitial}
\end{align}
with
\begin{align} 
\L^\eps[\psi](x) &  := \eps^{-2s}\int_{\RR^N}\nu_0 F(v)   [  \phi^\eps(x,v) -  \psi(x)]  \d v \label{eq:defL1'} \\
&  = \eps^{-2s}\int_{\RR^N}  F(v) \eps v \cdot \na_x \phi^\eps(x,v) \d v  \label{eq:defL3} \\
& = \div_x\left( \eps^{1-2s} \int_{\RR^N}   v F(v) \phi^\eps(x,v) \d v \right)\label{eq:defL2'}\\
& = \div_x\left( \eps^{1-2s} \int_{\RR^N}   v F(v) [\phi^\eps(x,v) -\psi(x)]\d v \right)\nonumber\\
& = \div_xD^{2s-1}_\eps[\psi](x).\label{eq:defL3'}
\end{align}

This equation differs from \eqref{eq:weakeps} in two important ways: 
Because $Q$ is given by \eqref{eq:Q0}, the last term in \eqref{eq:weakeps} does not appear in \eqref{eq:wftildepsi} (since $K(g^\eps)  = \int_{\RR^N} g^\eps(v)\, dv=0$).
On the other hand, the construction of $\phi^\eps_0$ has given rise to the additional terms \eqref{eq:wfCorrection} in the second line and we will need to show that these terms vanishe in the limit (this is where we need $\psi$ to satisfy the Neumann boundary condition).
The rest of the proof consist in passing to the limit in this equation.

\medskip

We conclude this subsection with the following simple lemma which will be useful several times throughout the paper:
\begin{lemma}\label{lem:L2bd}
For all $\psi\in H^s(\RR_+^N)$, $s>1/2$, and $v\in\RR^N$  we have:
\begin{equation}\label{eq:L2bd}
\int_{\RR^N_+} |\wpsi(x+v,v)|^2\d x \leq \left\{
\begin{array}{ll}
\int_{\RR^N_+} |\psi(x)|^2\d x & \mbox{ if } v_N>0 \\
\int_{\RR^N_+} |\psi(x)|^2\d x + |v_N| \int_{\RR^{N-1}} |\psi(x',0)|^2\d x' & \mbox{ if } v_N<0 \\
\end{array}
\right.
\end{equation}
\end{lemma}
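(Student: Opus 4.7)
The plan is to compute the left-hand side directly using the explicit formula \eqref{eq:fhhr} for the extension $\wpsi$, splitting the integration domain according to the sign of $(x+v)_N = x_N + v_N$, since the formula for $\wpsi(y,v)$ has two branches depending on whether $y_N > 0$ or $y_N < 0$. Note that the hypothesis $s > 1/2$ is used solely to ensure that the trace $\psi(\cdot,0)$ is well-defined in $L^2(\RR^{N-1})$, so that the right-hand side makes sense.

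For the case $v_N > 0$: since $x_N > 0$ we automatically have $(x+v)_N > 0$, so $\wpsi(x+v,v) = \psi(x+v)$. A simple translation change of variables $y = x+v$ gives
\[
\int_{\RR^N_+} |\psi(x+v)|^2\,dx = \int_{\{y_N > v_N\}} |\psi(y)|^2\,dy \leq \int_{\RR^N_+} |\psi(y)|^2\,dy,
\]
since $\{y_N > v_N\} \subset \RR^N_+$ when $v_N > 0$.

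For the case $v_N < 0$: split $\RR^N_+$ into $\{x_N > |v_N|\}$ (where $(x+v)_N > 0$) and $\{0 < x_N < |v_N|\}$ (where $(x+v)_N < 0$). On the first region, the first branch of \eqref{eq:fhhr} gives $\wpsi(x+v,v) = \psi(x+v)$, and the translation yields exactly $\int_{\{y_N > 0\}} |\psi(y)|^2\,dy$. On the second region, the second branch gives
\[
\wpsi(x+v,v) = \psi\!\left((x'+v') - \tfrac{x_N+v_N}{v_N} v',\, 0\right) = \psi\!\left(x' - \tfrac{x_N}{v_N} v',\, 0\right),
\]
which, for each fixed $x_N \in (0,|v_N|)$, is just a translate in $x'$ of the trace $\psi(\cdot,0)$. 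Integrating in $x'$ by translation invariance and then in $x_N$ over an interval of length $|v_N|$ produces the boundary term $|v_N|\int_{\RR^{N-1}} |\psi(x',0)|^2\,dx'$. Summing the two pieces yields the claimed bound (with equality, in fact).

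There is no serious obstacle: the lemma is essentially a bookkeeping exercise built from the explicit two-branch formula \eqref{eq:fhhr} plus translation invariance in the tangential directions. The only subtle point is justifying that $\psi(\cdot,0)\in L^2(\RR^{N-1})$, which follows from the trace theorem for $H^s(\RR^N_+)$ when $s>1/2$.
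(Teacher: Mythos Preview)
Your proof is correct and follows essentially the same approach as the paper's own proof: split according to the sign of $(x+v)_N$, use the translation $y=x+v$ on the region where $\wpsi(x+v,v)=\psi(x+v)$, and on the remaining strip $\{0<x_N<|v_N|\}$ use the explicit formula \eqref{eq:fhhr} together with translation invariance in $x'$. Your observation that the $v_N<0$ case is actually an equality is also noted in the paper.
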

In the sequel, we will repeatedly use the inequality (which follows from \eqref{eq:L2bd}):
\begin{equation}\label{eq:L2bdt}
\int_{\RR_+} \int_{\RR^N_+} |\wpsi(t,x+v,v)|^2\d x\d t \leq C(1+|v_N|)\|\psi\|_{H^s(\RR_+\times\RR^N_+)}.
\end{equation}
\begin{proof}[Proof of Lemma \ref{lem:L2bd}]
If $v_N>0$, then $x+v\in\Omega$ for all $x\in \Omega$ and so $\wpsi(x+v,v)=\psi(x+v)$. We deduce
$$ \int_{\RR^N_+} |\wpsi(x+v,v)|^2\d x = \int _{\RR^N_+}| \psi (x+v )|^2\d x = \int _{\RR^N_+, x_N\geq v_N}| \psi (x )|^2\d x 
$$
and the first inequality follows.

If $v_N<0$, then we write:
\begin{align*}
\int_{\RR^N_+} |\wpsi(x+v,v)|^2\d x 
& = 
\int_{\RR_+}\int_{\RR^{N-1}} |\wpsi((x'+v',x_N+v_N),v)|^2\d x'\d x_N\\
& = 
\int_{-v_N}^\infty \int_{\RR^{N-1}} |\psi(x+v)|^2\d x'\d x_N
+ \int_0^{-v_N} \int_{\RR^{N-1}}   |\wpsi(x+v,v)|^2\d x'\d x_N\\
& = 
\int_{\RR^{N}_+} |\psi(x)|^2\d x
+ \int_0^{-v_N} \int_{\RR^{N-1}}   |\wpsi(x+v,v)|^2\d x'\d x_N.
\end{align*}
Next, we note that if $x\in \RR^{N}_+$ and $x+v\in \RR^{N}_-$, then
$$ \wpsi(x+v,v) = \psi\left(x-\frac{x_N}{v_N}v\right).$$
We deduce
\begin{align*}
\int_0^{-v_N} \int_{\RR^{N-1}}   |\wpsi((x'+v',x_N+v_N),v)|^2\d x'\d x_N  
& = \int_0^{-v_N} \int_{\RR^{N-1}}   |\psi\left(x-\frac{x_N}{v_N}v\right)|^2\d x'\d x_N  \\
& =  -v_N \int_{\RR^{N-1}}   |\psi\left(x',0)\right)|^2\d x'\d x_N  
\end{align*}
and the second inequality in \eqref{eq:L2bd} follows (it is in fact an equality).
\end{proof}

\medskip

\subsection{Convergence of the operators}

In this section, we carefully define the operators $D^{2s-1}_\eps$, $\L^\eps$ 
and their limits and prove the main convergence result (Proposition \ref{prop:Lstrong}).

\paragraph{The operators $D^{2s-1}_\eps$ and $D^{2s-1}$.}
For a given function $\psi(x)$, we recall that we defined the operator $D^{2s-1}_\eps$ by \eqref{eq:DDss}.
After the change of variable $w=vz$, we can also rewrite \eqref{eq:DDss} as
\begin{equation}\label{eq:defDeps0}
D^{2s-1}_\eps[\psi](x) = \eps^{1-2s} \int_{\RR^N}   v F_0(v) [\wpsi (x+\eps v ,v) -\psi(x)]\d v
\end{equation}
where
\begin{align} \label{eq:F0def}
F_0(v)=\int_0^\infty e^{-\nu_0 z} \nu_0   F(v/z) z^{-N-1}\d z.
\end{align} 

We will prove:
\begin{proposition}\label{prop:d2s}
For all functions $\psi(x) \in L^{\infty}(0,\infty;H^1(\Omega))$, we have 
$$
D^{2s-1}_\eps[\psi] \to   D^{2s-1}[\psi] \mbox{ in $L^2(\Omega)$-strong}
$$
as $\eps\to 0$,
where the fractional gradient $D^{2s-1}$  is defined by \eqref{eq:D00} 
or, equivalently, by \eqref{eq:D0}.
\end{proposition}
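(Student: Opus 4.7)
The plan is to first perform the change of variable $w = \eps v$ in (\ref{eq:defDeps0}) and invoke the homogeneity $\wpsi(x,\tau v) = \wpsi(x,v)$ (for $\tau>0$) of the extension to rewrite, with $y = x+w$,
\begin{equation*}
D^{2s-1}_\eps[\psi](x) = \int_{\RR^N} K_\eps(y-x)\,\bigl[\wpsi(y,y-x) - \psi(x)\bigr]\,\d y, \qquad K_\eps(w) := \eps^{-N-2s}\, w\, F_0(w/\eps).
\end{equation*}
This puts $D^{2s-1}_\eps[\psi]$ and $D^{2s-1}[\psi]$ (see (\ref{eq:D00})) in the same structural form and reduces the task to a pointwise and quantitative comparison of $K_\eps$ with the candidate limit kernel $K_0(w) := \gamma\,\nu_0^{1-2s}\,\Gamma(2s)\,w/|w|^{N+2s}$.

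Next I would identify the pointwise limit of $K_\eps$. Using the definition (\ref{eq:F0def}) of $F_0$ together with $F(v) \sim \gamma/|v|^{N+2s}$ and the identity $\int_0^\infty e^{-\nu_0 z} z^{2s-1}\,\d z = \Gamma(2s)\nu_0^{-2s}$, a short computation yields $K_\eps(w) \to K_0(w)$ as $\eps\to 0$ for every $w\ne 0$. The refined asymptotic $|F(v) - \gamma/|v|^{N+2s}| \leq C/|v|^{N+4s}$ from (\ref{eq:F}) upgrades this to a quantitative bound $|K_\eps(w) - K_0(w)| \leq C\eps^{2s}/|w|^{N+4s-1}$ for $|w| \geq \eps$, together with the uniform bound $|K_\eps(w)| \leq C/|w|^{N+2s-1}$ for all $w\neq 0$.

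To promote these kernel estimates to $L^2(\Omega)$-strong convergence of $D^{2s-1}_\eps[\psi]$, I would split the integration into three zones parametrized by $0<\delta<R$: a near-diagonal zone $|y-x|<\delta$, an intermediate zone $\delta \leq |y-x| \leq R$, and a far zone $|y-x| > R$. On the near-diagonal zone, the singular kernel of order $|w|^{1-N-2s}$ is absorbed using $L^2$ increment estimates derived from Lemma \ref{lem:L2bd} combined with $\psi \in H^1(\Omega)$, producing a contribution bounded by $C\delta^{2-2s}\|\psi\|_{H^1}^2$, uniform in $\eps$. On the far zone, the quantitative bound $|K_\eps - K_0| \lesssim \eps^{2s}/|y-x|^{N+4s-1}$ yields a contribution of order $\eps^{2s}$ (against an $L^2$-norm of $\wpsi$ controlled via Lemma \ref{lem:L2bd}) that vanishes as $\eps \to 0$. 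On the intermediate zone both kernels are pointwise bounded by $\delta^{1-N-2s}$ on a set of locally finite measure and converge pointwise, so dominated convergence handles that piece. Letting first $\eps\to 0$, then $\delta\to 0$ and $R \to \infty$, gives the claim.

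The main obstacle will be the near-diagonal estimate. Since $\psi$ is only in $H^1$, no pointwise Hölder control of the increment $\wpsi(y,y-x) - \psi(x)$ is available, and the bound must be carried out in $L^2_x$ via Fubini-type arguments. Moreover, when $(y-x)_N < 0$ the extension $\wpsi(y,y-x)$ is defined by reflection through $\pa\Omega$ as in (\ref{eq:fhhr}), so the relevant difference is not a plain translate of $\psi$; this is precisely where Lemma \ref{lem:L2bd} plays an essential role by providing the correct $L^2$ control of $\wpsi$ on $\Omega \times \RR^N$, with the extra boundary term $|v_N|\int|\psi(x',0)|^2\d x'$ paying only a linear price in $|w_N|$ that is compatible with the integrable exponent $2-2s > 0$.
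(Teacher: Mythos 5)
Your argument is correct and rests on the same pillars as the paper's proof of Proposition~\ref{prop:Lstrong} (which the paper invokes by analogy for Proposition~\ref{prop:d2s}): the rescaling that puts $D^{2s-1}_\eps$ and $D^{2s-1}$ in the common form $\int K(y-x)[\wpsi(y,y-x)-\psi(x)]\,dy$ (using the homogeneity $\wpsi(x,\tau v)=\wpsi(x,v)$), the quantitative kernel comparison furnished by the refined tail bound in \eqref{eq:F} (packaged in the paper as Lemma~\ref{lem:F0F1}), a first-order Taylor/Fubini argument absorbing the $|w|^{1-N-2s}$ singularity near the diagonal, and Lemma~\ref{lem:L2bd} to control the extension $\wpsi$ in $L^2_x$ away from it. Where you differ from the paper is purely in the organization of the scale decomposition: you fix two scales $\delta<R$, send $\eps\to0$, then $\delta\to0,\ R\to\infty$, whereas the paper cuts once at the $\eps$-dependent scale $|v|=\eps^{-\alpha}$ (equivalently $|w|=\eps^{1-\alpha}$) with $\alpha\in(0,1)$ chosen so that both resulting terms vanish as powers of $\eps$. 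The paper's single $\eps$-dependent cut is slightly slicker: it yields an explicit rate of convergence and avoids the nested double limit; your fixed-zone split is perhaps conceptually more transparent and also dispenses with the intermediate zone since, for $s>1/2$, the weighted kernel $|K_\eps-K_0|\lesssim\eps^{2s}|w|^{1-N-4s}$ together with the $(1+|w_N|)$ factor from Lemma~\ref{lem:L2bd} is already integrable over all of $\{|w|>\delta\}$, so $\delta$ and $R$ could be merged. One small imprecision worth fixing: the near-diagonal estimate does not follow from Lemma~\ref{lem:L2bd} itself (which bounds $\|\wpsi(\cdot+w,w)\|_{L^2(\Omega)}$ but gives no smallness as $|w|\to0$); what is needed is the increment estimate $\int_\Omega|\wpsi(x+w,w)-\psi(x)|^2\,dx\le|w|^2\|\na\psi\|_{L^2(\Omega)}^2$, obtained by writing $\wpsi(x+w,w)-\psi(x)=\int_0^{\tau_0(x,w)}w\cdot\na\psi(x+\tau w)\,d\tau$ (the path stays in $\overline\Omega$, by construction of the extension) and applying Fubini--Cauchy--Schwarz, which is the first-order Taylor step the paper alludes to; the resulting near-zone $L^2$ norm is of order $\delta^{2-2s}\|\na\psi\|_{L^2}$.
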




\paragraph{The operators $\L^\eps$ and $\L$.}
We recall that $\L^\eps$ is defined by \eqref{eq:defL1'}:
$$
\mathcal L^\eps[\psi](x): = \eps^{-2s}\int_{\RR^N} \nu_0 F(v)   [  \phi^\eps(x,v) -  \psi(x)]  \d v
$$
and using \eqref{eq:phiepsdef} and the change of variable $w=vz$, we find
\begin{align}
\mathcal L^\eps[\psi](x)&  = \eps^{-2s} \nu_0 \int_{\RR^N} \int_0^\infty e^{-\nu_0 z} \nu_0 F(v) [\wpsi (x+\eps v z,v)  -  \psi(x)]  \d v\nonumber\\
& =  \eps^{-2s}\int_{\RR^N} \bF(v)   [ \wpsi (x+\eps v,v) -  \psi(x)]  \d v
\label{eq:defL11} 
\end{align}
where $\bF$ is defined by
\begin{equation}\label{eq:bfdef} 
\bF(v) = \int_0^\infty e^{-\nu_0 z} \nu_0^2 F(v/z) z^{-N}  \d z.
\end{equation}

We also define the corresponding asymptotic operator:
\begin{equation}\label{eq:defL22}
\L[\psi](x)  := \gamma \, \nu_0^{1-2s}\Gamma(2s+1)  \PV \int_{\RR^N}   [ \wpsi (y,y-x) - \psi (x)]\frac{1}{|y-x|^{N+2s}} \d y.
\end{equation}
Since $\wpsi(y,y-x) = \psi(y)$ for $y$ in $\Omega$,  the principal value in the right hand side of \eqref{eq:defL22} is defined for all $x\in\Omega$ if $\psi$ is in $C^{1,\beta}(\Omega)$ for some $\beta>2s-1$.
However, even for such functions,  $\L[\psi](x)$ is typically singular when $x\to\pa \Omega$. 
Indeed, we will prove in Proposition \ref{prop:Linfbd} that if $\psi$ is in $C^{1,\beta}(\Omega)$, then the function $\L[\psi](x)$ remains bounded as $x\to\pa \Omega$ if and only if $\psi$ satisfies the classical Neumann boundary condition
$$ \na_x\psi \cdot n(x)=0.$$

A key result in the proof of Theorem \ref{thm:main} will be the following:
\begin{proposition}\label{prop:Lstrong}
For all function $\psi\in L^\infty(0,\infty;H^2(\Omega))$, such that $\L[\psi]\in L^2(\RR_+\times\Omega)$ we have 
$$  \mathcal{L^\eps} [\psi](t,x)  \longrightarrow  \mathcal{L} [\psi](t,x) \qquad \mbox{ in  $L^2((0,T)\times\Omega) $-strong for all $T>0$.}
$$
\end{proposition}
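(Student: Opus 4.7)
The strategy is to change variables $y = x + \eps v$ so that $\mathcal L^\eps[\psi]$ and $\mathcal L[\psi]$ both become integrals against comparable singular kernels in $y$. Using the homogeneity of degree zero of $\wpsi(y,\cdot)$, \eqref{eq:defL11} becomes
\[
\mathcal L^\eps[\psi](x) = \int_{\RR^N} K_\eps(y-x)\,[\wpsi(y,y-x) - \psi(x)]\,\d y, \qquad K_\eps(z) := \eps^{-N-2s}\bF(z/\eps).
\]
The first step is a Laplace-type computation using \eqref{eq:bfdef} and the decay \eqref{eq:F}, which yields $\bF\in L^\infty(\RR^N)$ together with
\[
\left|\bF(v) - \frac{C_0}{|v|^{N+2s}}\right| \leq \frac{C}{|v|^{N+4s}}\quad\text{for }|v|\geq 1, \qquad C_0 := \gamma\nu_0^{1-2s}\Gamma(2s+1).
\]
Hence $|K_\eps(z) - C_0|z|^{-N-2s}|\leq C\eps^{2s}|z|^{-N-4s}$ for $|z|\geq\eps$, and $K_\eps$ converges pointwise to the kernel of $\mathcal L$ in \eqref{eq:defL22} on $\RR^N\setminus\{0\}$.

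Next, I would split the integral at scale $|y-x|=1$. On the far region $|y-x|>1$, the weighted kernel estimate above combined with Lemma \ref{lem:L2bd} (used to control $\|\wpsi(\cdot+v,v)\|_{L^2(\Omega)}$ by $\|\psi\|_{L^2(\Omega)}$ plus a trace contribution, which is finite for $\psi\in H^2(\Omega)$) yields an $O(\eps^{2s})$ bound in $L^2(\Omega)$ via Minkowski's integral inequality. On the near region $|y-x|\leq 1$, cancellation is required. Writing, for $y\in\Omega$,
\[
\wpsi(y,y-x) - \psi(x) = \psi(y) - \psi(x) = \na\psi(x)\cdot(y-x) + R(x,y), \qquad |R(x,y)| \leq C|y-x|^2 \sup_{[x,y]}|D^2\psi|,
\]
the linear term vanishes by symmetry of both $K_\eps$ and $C_0|y-x|^{-N-2s}$ when $x$ is far from $\pa\Omega$, while the remainder $R$ is absolutely integrable against both kernels since $s<1$. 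Pointwise (and then $L^2$) convergence of this contribution follows by dominated convergence.

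The main obstacle is the near-boundary region: when $x$ is within distance $1$ of $\pa\Omega$, a portion of the integration variable $y$ lies outside $\Omega$, where \eqref{eq:fhhr} forces $\wpsi(y,y-x)$ to equal the trace of $\psi$ at the intersection of the ray from $x$ through $y$ with $\pa\Omega$. The symmetry used above is then broken, and a direct computation analogous to Lemma \ref{lem:h} and the proof of Proposition \ref{prop:L2bd} shows that the resulting boundary contribution is dominated in $L^2(\Omega)$ precisely by the quantity $\int_\Omega |x_N^{1-2s}\pa_{x_N}\psi|^2\d x$, which is finite exactly under the hypothesis $\mathcal L[\psi]\in L^2$ (and which, by Proposition \ref{prop:L2bd}, forces the Neumann-type condition $\pa_{x_N}\psi = 0$ on $\pa\Omega$ when $s\geq 3/4$). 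Combined with the pointwise convergence from the previous step, this provides a uniform dominating function, and $L^2$-strong convergence on $(0,T)\times\Omega$ follows by Lebesgue's dominated convergence theorem, the time variable acting as a parameter throughout.
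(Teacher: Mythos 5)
Your proposal takes a genuinely different route from the paper. You split the kernel at a \emph{fixed} spatial scale $|y-x|=1$, establish pointwise convergence of the kernels away from the origin, and then argue via dominated convergence, using the hypothesis $\L[\psi]\in L^2$ to supply a dominating function in the near-boundary zone. The paper instead splits at the \emph{moving} scale $|y-x|\sim\eps^{1-\alpha}$ with $\alpha\in(\tfrac12,1)$, which yields explicit quantitative rates $I_\eps^{\pm}=O(\eps^{\text{positive}})$ for every piece, with the near-boundary contribution controlled by Corollary~\ref{cor:15} (which gives that $\delta^{-2(2s-1)}\int_{\{x_N\leq\delta\}}|\pa_{x_N}\psi|^2\,\d x\to 0$ as $\delta\to0$, not merely that it is bounded). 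This avoids any appeal to dominated convergence and makes the role of $\L[\psi]\in L^2$ fully quantitative. Your approach, if made rigorous, would be an alternative argument, but it is not the paper's.

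There are two genuine gaps. First, the pointwise Taylor remainder bound $|R(x,y)|\leq C|y-x|^2\sup_{[x,y]}|D^2\psi|$ is not available: the hypothesis is $\psi\in H^2(\Omega)$, so $D^2\psi$ is only $L^2$ and has no a.e.\ pointwise supremum along segments. You must instead use the integral form of the remainder (as in \eqref{eq:hhg}) and pass it through Cauchy--Schwarz so that only $\|D^2\psi\|_{L^2}$ appears; as written your argument would only prove the result for $\psi\in C^{1,1}$. Second, the near-boundary step is asserted rather than proved: you claim a uniform-in-$\eps$ $L^2$ bound and then invoke ``combined with the pointwise convergence from the previous step'', but the previous steps (symmetry argument, far-field estimate) do not establish pointwise a.e.\ convergence of $\L^\eps[\psi](x)$ to $\L[\psi](x)$ for $x$ in the boundary layer where the extension $\wpsi$ is active and symmetry breaks; this is precisely the delicate part and it cannot be dispatched by analogy to Lemma~\ref{lem:h}. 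A dominated-convergence proof would have to actually prove this pointwise convergence, and also be careful with the principal values for $|y-x|<\eps$, where your kernel estimate $|K_\eps(z)-C_0|z|^{-N-2s}|\leq C\eps^{2s}|z|^{-N-4s}$ does not apply (it is only valid for $|z|\geq\eps$). A minor but real slip: the claim $\bF\in L^\infty(\RR^N)$ is false --- $\bF(v)\sim C|v|^{1-N}$ as $v\to0$ by \eqref{eq:bfdef} --- though this does not propagate since only integrability near the origin is needed.
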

We refer to Proposition \ref{prop:L2bd} for a characterization of the functions $\psi\in H^2(\Omega)$ such that $\L[\psi](x)\in L^2(\Omega)$).

We recall that we also have (see \eqref{eq:defL2'})
\begin{align*} 
\mathcal L^\eps[\psi](x) 
&  = \div_x D^{2s-1}_\eps[\psi](x)
\end{align*}
and using Propositions \ref{prop:d2s} and \ref{prop:Lstrong}, we immediately deduce:
\begin{corollary}\label{cor:Ld}
The operator $\L$ defined by \eqref{eq:defL22}  satisfies
$$
\L[\psi](x)  =   \div D^{2s-1} [ \psi] (x)
$$
for all function $\psi\in H^2(\Omega)$ such that $\L[\psi](x)\in L^2(\Omega)$.
\end{corollary}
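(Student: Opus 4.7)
\textbf{Proof plan for Corollary \ref{cor:Ld}.} The strategy is to combine the two strong convergence results of Propositions \ref{prop:d2s} and \ref{prop:Lstrong} with the $\eps$-level identity \eqref{eq:defL3'}, namely
$$
\mathcal L^\eps[\psi](x) = \div_x D^{2s-1}_\eps[\psi](x),
$$
and to pass to the limit $\eps\to 0$ in the sense of distributions. Note first that although Propositions \ref{prop:d2s} and \ref{prop:Lstrong} are stated for functions depending on time, a time-independent $\psi\in H^2(\Omega)$ with $\mathcal L[\psi]\in L^2(\Omega)$ trivially satisfies the hypotheses of both (on any finite time interval), so both convergence results apply to our $\psi$ in $L^2(\Omega)$-strong.

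The first step is to rewrite \eqref{eq:defL3'} as a weak identity. Fix an arbitrary test function $\vphi\in C^\infty_c(\Omega)$. Starting from the explicit representation \eqref{eq:defL11} of $\mathcal L^\eps[\psi]$ and \eqref{eq:defDeps0} of $D^{2s-1}_\eps[\psi]$, a direct application of Fubini's theorem together with the identity
$
\bF(v) = -\nu_0\,\div_v \bigl(v F_0(v)\bigr)
$
(which relates \eqref{eq:F0def} and \eqref{eq:bfdef}) — or, more simply, an integration by parts against $\vphi$ starting from \eqref{eq:defL3} — yields
$$
\int_\Omega \mathcal L^\eps[\psi](x)\,\vphi(x)\,\d x \;=\; -\int_\Omega D^{2s-1}_\eps[\psi](x)\cdot \na \vphi(x)\,\d x.
$$
This only uses that $D^{2s-1}_\eps[\psi]$ and $\mathcal L^\eps[\psi]$ are locally integrable for fixed $\eps>0$, which follows from their integral formulas and the assumption $\psi\in H^2(\Omega)$.

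The second step is to pass to the limit $\eps\to 0$. Since $\vphi\in L^2(\Omega)$ and $\na\vphi\in L^2(\Omega)^N$, the strong $L^2$-convergence $\mathcal L^\eps[\psi]\to\mathcal L[\psi]$ from Proposition \ref{prop:Lstrong} and the strong $L^2$-convergence $D^{2s-1}_\eps[\psi]\to D^{2s-1}[\psi]$ from Proposition \ref{prop:d2s} give
$$
\int_\Omega \mathcal L[\psi]\,\vphi\,\d x \;=\; -\int_\Omega D^{2s-1}[\psi]\cdot \na\vphi\,\d x \qquad \forall\,\vphi\in C^\infty_c(\Omega).
$$
This is exactly the statement $\mathcal L[\psi] = \div D^{2s-1}[\psi]$ in $\mathcal D'(\Omega)$. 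Since by assumption $\mathcal L[\psi]\in L^2(\Omega)$, this distributional equality is an equality of $L^2(\Omega)$ functions, completing the proof.

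The only mildly delicate point is the Fubini/integration-by-parts justification at the $\eps$-level; this is routine once one notes that $F_0$ and $\bF$ have sufficient decay at infinity (inherited from \eqref{eq:F}) to make the iterated integrals defining $D^{2s-1}_\eps[\psi]$ and $\mathcal L^\eps[\psi]$ absolutely convergent locally in $x$ for $\psi\in H^2(\Omega)$, so that the integration by parts against the compactly supported $\vphi$ raises no boundary terms and commutes with the $v$- and $z$-integrations.
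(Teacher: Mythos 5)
Your proposal is correct and follows essentially the same route as the paper: use the $\eps$-level identity $\mathcal L^\eps[\psi]=\div_x D^{2s-1}_\eps[\psi]$ (equation \eqref{eq:defL3'}) together with the strong $L^2$-convergences of Propositions \ref{prop:d2s} and \ref{prop:Lstrong}, and pass to the limit in $\mathcal D'(\Omega)$; your extra detail of testing against $\vphi\in C^\infty_c(\Omega)$ is exactly what ``we immediately deduce'' amounts to. One small slip in your aside: the identity relating \eqref{eq:F0def} and \eqref{eq:bfdef} is $\bF(v)=-\div_v\bigl(v F_0(v)\bigr)$, with no extra factor of $\nu_0$ (as can be checked by integrating by parts in $z$), but since you immediately bypass that route in favor of \eqref{eq:defL3}, it does not affect the argument.
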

We will also prove the following result which justify the formula for $\L$ given in the introduction:
\begin{lemma}\label{lem:opL2}
Let  $\psi\in C^{1,\beta}(\Omega)$ for some $\beta>2s-1$. Then 
\begin{align}
\L[\psi](x) &  = \gamma \nu_0^{1-2s}\Gamma(2s) \PV \int_{\Omega}   \frac{y-x}{|y-x|^{N+2s}}  \cdot \na _y \psi (y)    \d y \qquad \forall x\in\Omega.
\end{align}
\end{lemma}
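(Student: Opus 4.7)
The plan is to pass from the definition \eqref{eq:defL22} of $\L[\psi](x)$ to the claimed expression by introducing polar coordinates centered at $x$ and exploiting the structure of $\wpsi$ along rays issued from $x$. Recall
$$\L[\psi](x) = \gamma\nu_0^{1-2s}\Gamma(2s+1)\,\PV\int_{\RR^N} [\wpsi(y,y-x)-\psi(x)]\,|y-x|^{-N-2s}\d y.$$
Fix $x\in\Omega$, and for $y\neq x$ set $r=|y-x|$, $\omega=(y-x)/r$. The scaling invariance of $\wpsi$ in velocity (item 2 in Section~\ref{sec:approxtest}) gives $\wpsi(x+r\omega,r\omega)=\wpsi(x+r\omega,\omega)$, and the construction of the extension on rays issued from an interior point shows: if $\omega_N\geq 0$ then $x+r\omega\in\Omega$ for all $r>0$; if $\omega_N<0$, the ray exits $\Omega$ at $r_0(\omega):=-x_N/\omega_N\geq x_N>0$ and $\wpsi(x+r\omega,\omega)$ is frozen at the boundary value for $r>r_0$. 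Setting $\bar r(\omega):=+\infty$ if $\omega_N\geq 0$ and $\bar r(\omega):=r_0(\omega)$ otherwise, one obtains, using $\psi\in C^{1,\beta}(\Omega)$,
$$\wpsi(x+r\omega,\omega)-\psi(x)=\int_0^{\min(r,\bar r(\omega))}\na\psi(x+\rho\omega)\cdot\omega\d\rho.$$

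The second step is to switch the order of integration in $(r,\rho)$. Writing $\d y = r^{N-1}\d r\d\omega$ and using $\PV$ as $\lim_{\delta\to 0^+}\int_\delta^\infty\dots$, this yields
$$\L[\psi](x)=\gamma\nu_0^{1-2s}\Gamma(2s+1)\lim_{\delta\to 0^+}\int_{S^{N-1}}\int_\delta^\infty\left(\int_0^{\min(r,\bar r(\omega))}\na\psi(x+\rho\omega)\cdot\omega\,d\rho\right) r^{-1-2s}\d r\d\omega.$$
After Fubini (justified in the regularized region $r>\delta$, where everything is absolutely integrable), the inner $r$-integral $\int_{\max(\delta,\rho)}^\infty r^{-1-2s}\d r$ produces a factor $(2s)^{-1}\max(\delta,\rho)^{-2s}$, so that
$$\L[\psi](x)=\frac{\gamma\nu_0^{1-2s}\Gamma(2s+1)}{2s}\lim_{\delta\to 0^+}\int_{S^{N-1}}\int_\delta^{\bar r(\omega)}\na\psi(x+\rho\omega)\cdot\omega\, \rho^{-2s}\d\rho\d\omega,$$
the residual contribution from $\rho<\delta$ vanishing at rate $O(\delta^{1+\beta-2s})$ thanks to the cancellation $\int_{S^{N-1}}\omega\d\omega=0$ combined with the Hölder estimate $|\na\psi(x+\rho\omega)-\na\psi(x)|\leq C\rho^\beta$ and the assumption $\beta>2s-1$.

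The third step is to recognize that, upon returning to Cartesian coordinates via $y=x+\rho\omega$, the condition $\rho<\bar r(\omega)$ describes exactly $\{y\in\Omega\}$, and $\omega\rho^{-2s}\d\rho\d\omega=(y-x)|y-x|^{-N-2s}\d y$. Combining this with $\Gamma(2s+1)=2s\,\Gamma(2s)$ yields the desired identity
$$\L[\psi](x)=\gamma\nu_0^{1-2s}\Gamma(2s)\,\PV\int_\Omega\na\psi(y)\cdot\frac{y-x}{|y-x|^{N+2s}}\d y,$$
the principal value being understood in the same sense $\lim_{\delta\to 0^+}\int_{\Omega\setminus B_\delta(x)}$, which is well defined since subtracting $\na\psi(x)$ from $\na\psi(y)$ produces an absolutely integrable integrand (of order $|y-x|^{\beta+1-N-2s}$, with $\beta>2s-1$) while the residual term $\na\psi(x)\cdot\int_{\Omega\setminus B_\delta(x)}(y-x)|y-x|^{-N-2s}\d y$ has a finite limit by odd symmetry on any ball $B_R(x)\subset\Omega$.

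The main technical obstacle is the delicate bookkeeping around the principal value: several intermediate integrals (for instance $\int_0^{\bar r(\omega)}\rho^{-2s}\d\rho$) are individually divergent when $2s\geq 1$, and one must use the spherical cancellation $\int_{S^{N-1}}\omega\d\omega=0$ together with the Hölder regularity $\beta>2s-1$ to justify the Fubini exchange in the regularized region and then pass to the limit $\delta\to 0^+$ consistently in both the original and the target formulation.
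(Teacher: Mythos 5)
Your proof is correct, and it takes a genuinely different route from the paper's. The paper works at the $\eps$ level: starting from formula \eqref{eq:defL3} for $\L^\eps$, it observes that $w\cdot\na_x\wpsi(y,w)=0$ whenever $y\notin\Omega$ (because $\wpsi$ is constant along characteristics there), which lets it rewrite $\L^\eps[\psi](x)$ as an integral of $F_0((y-x)/\eps)(y-x)\cdot\na\psi(y)$ over $\Omega$ only, and then passes to the limit $\eps\to 0$ by invoking the convergence machinery already built for Proposition \ref{prop:Lstrong}. You instead work directly at $\eps=0$ with the definition \eqref{eq:defL22}: you expand $\wpsi(x+r\omega,\omega)-\psi(x)$ as the line integral of $\na\psi$ up to the (possibly finite) exit radius $\bar r(\omega)$, apply Fubini in $(r,\rho)$ in the regularized region $r>\delta$, and then absorb the residual $\rho<\delta$ contribution using the spherical cancellation $\int_{S^{N-1}}\omega\,d\omega=0$ together with $[\na\psi]_{C^\beta}$ and $\beta>2s-1$. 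Both proofs exploit the same underlying structural fact that $\wpsi$ is frozen along rays outside $\Omega$, but yours is self-contained (no reference to the $\eps$-approximation or to Proposition \ref{prop:Lstrong}), whereas the paper's is shorter given the machinery already in place. Your final paragraph correctly pins down the meaning of the $\PV$ in the target formula and verifies it is well defined, splitting $\na\psi(y)=\na\psi(x)+[\na\psi(y)-\na\psi(x)]$ and using odd symmetry on $B_R(x)\subset\Omega$ for the residual term; the exponent bookkeeping ($\beta+1-2s>0$ near $x$, $2s>1$ at infinity) is right.
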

Finally, integrating by parts the formula \eqref{eq:opL2}, we can also write the following formula for $\L$
\begin{align}
\L[\psi](x) &  = \gamma \nu_0^{1-2s}\Gamma(2s+1) \PV  \int_{\Omega}   \frac{ \psi (y)-\psi(x)  }{|y-x|^{N+2s}} \d y  \nonumber \\
& \qquad  +  \gamma \nu_0^{1-2s}\Gamma(2s)\PV \int_{\pa \Omega}   \frac{y-x}{|y-x|^{N+2s}}\cdot n(y)  [\psi (y)-\psi(x)]    \d y 
\label{eq:opL3} 
\end{align}
which clearly show the relation between the operator $\L$ and the regional fractional laplacian defined in the introduction.

\medskip

We now turn to the proof of these results.

The proof of Propositions \ref{prop:d2s} and  \ref{prop:Lstrong} use very similar computations. 
We will only prove the second one in details, since it is clearly the more complicated of the two.
Before that, we note that the introduction of the functions $F_0$ and $\bF$ above allowed us to eliminate the variable $z$ from the definition of $D^{2s-1}_\eps$ and $\L^\eps$. 
Of course, their behavior for large $v$ is related to that of $F$. More precisely, we have the following Lemma:
\begin{lemma}\label{lem:F0F1}
If $F$ satisfies \eqref{eq:F}, then the functions $F_0$ and $F_1$ defined by \eqref{eq:F0def} and \eqref{eq:bfdef} satisfy:
\begin{equation}\label{eq:F0h}
\left|F_0(v) -  \frac{\gamma_0}{|v|^{N+2s}}\right| \leq \frac{C}{|v|^{N+4s}} \qquad \mbox{ for all } |v|\geq 1, \qquad \gamma_0 = \gamma\, \nu_0^{1-2s} \Gamma (2s)
\end{equation}
\begin{equation}\label{eq:Fb}
\left|\bF(w) - \frac{\gamma_1 }{|w|^{N+2s}} \right| \leq  \frac{C }{|w|^{N+4s}} \qquad\mbox{ for all } |w|\geq 1, \qquad \gamma_1  = \gamma \nu_0^{1-2s}\Gamma(2s+1)
\end{equation}
\end{lemma}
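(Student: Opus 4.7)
The plan is to reduce both estimates to a single asymptotic calculation for a pure power-law integrand, then control the discrepancy between $F$ and its tail profile $\gamma/|w|^{N+2s}$ using the hypothesis \eqref{eq:F}. The first step is a direct Gamma-function computation. If one pretends that $F(w)=\gamma/|w|^{N+2s}$ for every $w$, then $F(v/z) = \gamma z^{N+2s}/|v|^{N+2s}$ and, after the change of variable $u=\nu_0 z$,
\begin{align*}
F_0(v) &\rightsquigarrow \frac{\gamma \nu_0}{|v|^{N+2s}} \int_0^\infty e^{-\nu_0 z} z^{2s-1}\,dz = \frac{\gamma \nu_0^{1-2s}\Gamma(2s)}{|v|^{N+2s}},\\
F_1(v) &\rightsquigarrow \frac{\gamma \nu_0^2}{|v|^{N+2s}} \int_0^\infty e^{-\nu_0 z} z^{2s}\,dz = \frac{\gamma \nu_0^{1-2s}\Gamma(2s+1)}{|v|^{N+2s}}.
\end{align*}
This identifies the constants $\gamma_0$ and $\gamma_1$ in the statement.

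The second step is to write the remainder as
\[
F_0(v) - \frac{\gamma_0}{|v|^{N+2s}} = \int_0^\infty e^{-\nu_0 z}\nu_0\Bigl[F(v/z) - \gamma\,\frac{z^{N+2s}}{|v|^{N+2s}}\Bigr] z^{-N-1}\,dz,
\]
and split the integral at $z=|v|$. On $[0,|v|]$ one has $|v/z|\geq 1$, so the pointwise bound in \eqref{eq:F} controls the bracket by $C z^{N+4s}/|v|^{N+4s}$; the resulting $z$-integral is again a convergent Gamma integral $\int_0^\infty e^{-\nu_0 z} z^{4s-1}\,dz$, producing a contribution of size $C/|v|^{N+4s}$, which is exactly the target.

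The third step — and the only place that requires any care — is the tail $z\geq|v|$, where the hypothesis \eqref{eq:F} is unavailable because $|v/z|<1$. I would bound the two pieces separately: the genuine contribution $\int_{|v|}^\infty e^{-\nu_0 z}\nu_0 F(v/z) z^{-N-1}\,dz$ by $\|F\|_\infty |v|^{-N-1} e^{-\nu_0|v|}$, which is exponentially small in $|v|$ and hence negligible compared to $|v|^{-N-4s}$ for $|v|\geq 1$; while the model tail requires the elementary inequality $e^{-\nu_0 z} z^{2s-1}\leq C_s z^{-2s-1}$ for $z\geq 1$, giving
\[
\frac{\gamma\nu_0}{|v|^{N+2s}}\int_{|v|}^\infty e^{-\nu_0 z} z^{2s-1}\,dz \leq \frac{C}{|v|^{N+2s}}\cdot\frac{1}{|v|^{2s}} = \frac{C}{|v|^{N+4s}},
\]
as required. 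The argument for $F_1$ is word-for-word identical after shifting the power of $z$ by one, replacing $\Gamma(2s)$ by $\Gamma(2s+1)$ and $z^{4s-1}$ by $z^{4s}$, which remains integrable against $e^{-\nu_0 z}$. The conceptual obstacle is thus minimal and lies entirely in handling the tail without the pointwise profile, which is resolved by exploiting the exponential factor $e^{-\nu_0 z}$.
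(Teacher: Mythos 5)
Your proposal is correct and follows essentially the same route as the paper: identify the constants $\gamma_0,\gamma_1$ via the Gamma integrals, rewrite the remainder with the factor $\gamma z^{N+2s}/|v|^{N+2s}$ inserted, split the $z$-integral at $z=|v|$, use the pointwise decay hypothesis on $[0,|v|]$ to produce the $|v|^{-N-4s}$ bound, and dispose of the tail $z\geq|v|$ via the exponential decay of $e^{-\nu_0 z}$. The only cosmetic difference is that on the tail you bound the $F$-piece and the power-law piece separately, whereas the paper bounds their difference by a constant and then uses the exponential tail; both yield the same conclusion.
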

\begin{proof}[Proof of Lemma \ref{lem:F0F1}]
We only prove \eqref{eq:F0h} since the proof of \eqref{eq:Fb} is almost identical.
We start by noticing that 
$$ 
\int_0^\infty e^{-\nu_0 z} \nu_0   z^{2s-1}\d z=\nu_0^{1-2s} \Gamma (2s)$$
and so
$$
F_0(v) -  \frac{\gamma_0}{|v|^{N+2s}}=\int_0^\infty e^{-\nu_0 z} \nu_0   \left[ z^{-N-2s} F(v/z) - \frac{\gamma} {|v|^{N+2s}} \right] z^{2s-1}\d z.$$
For $|v|\geq 1$, using \eqref{eq:F}, we deduce
\begin{align*}
\left| F_0(v) -  \frac{\gamma_0}{|v|^{N+2s}}\right|
& \leq \int_0^{|v|} e^{-\nu_0 z} \nu_0  \frac{C z^{2s}} {|v|^{N+4s}}  z^{2s-1}\d z+ C \int_{|v|}^\infty e^{-\nu_0 z} \nu_0   z^{2s-1}\d z\\
& \leq  \frac{C  } {|v|^{N+4s}} + e^{-\nu_0|v|/2}
\end{align*}
and the result follows.
\end{proof}


\begin{proof}[Proof of Proposition \ref{prop:Lstrong}]
We denote
\begin{align*}
I_\eps = \underset{\RR_+\times\RR^N_+}{\iint} \big( \mathcal{L}^\eps[\psi](x) - \mathcal{L}[\psi](x) \big)^2 \d t \d x .
\end{align*}
We are going to show that $\lim_{\eps\to 0} I^\eps=0$.
Definition \eqref{eq:defL22}  and the fact that $ \gamma_1  = \gamma \nu_0^{-2s} \Gamma(2s+1)$ (see \eqref{eq:Fb}) imply
\begin{align*}
\L[\psi](x) & =    \PV\int_{\RR^N}   [ \wpsi (y,y-x) - \psi (x)]\frac{\gamma_1}{|y-x|^{N+2s}} \d y \\
& =   \eps^{-2s}    \PV\int_{\RR^N}     [ \wpsi (x+\eps z v,v) -\psi (x)]\frac{\gamma_1}{|v|^{N+2s}} \d v. 
\end{align*}
Using   \eqref{eq:defL11}, we deduce
$$
\L^\eps[ \psi] (x) -  \L[\psi](x) =  \eps^{-2s}\PV\int_{\RR^N}     [\wpsi(x+\eps v ,v)   - \psi(x)]     \left(\bF (v ) -\frac{\gamma_1}{|v|^{N+2s}} \right) \d v  .
$$
Setting $G(v)=\bF(v) -\frac{\gamma_1 }{|v|^{N+2s}} $, we thus write
\begin{align*}
I_\eps& = \underset{\RR_+\times\RR^N_+}{\iint} \left( 
 \eps^{-2s}\PV \int_{\RR^N}  [ \wpsi (x+\eps v,v) -\psi (x)]
G(v)  \d v \right)^2 \d t \d x \\
& \leq  \underset{\RR_+\times\RR^N_+}{\iint} \left( 
 \eps^{-2s}   \PV\int_{|\eps ^\alpha v|<1} [ \wpsi (x+\eps v,v) -\psi(x)]
G(v) \d v \right)^2 \d t \d x \\
& \quad  + \underset{\RR_+\times\RR^N_+}{\iint} \left( 
 \eps^{-2s} \int_{|\eps^\alpha v|>1} [ \wpsi (x+\eps v,v) -\psi(x)]
G(v) \d v \right)^2 \d t \d x \\
& = I_\eps^-+I_\eps^+
\end{align*}
for some $\alpha\in(0,1)$ to be chosen later.

Note that $G(v)$ is singular near $0$, but  $G(v)$ decays faster than $F(v)$ as $|v|\to \infty$.
Indeed, we have (see \eqref{eq:Fb}) $G(v) \leq C |v|^{-(N+4s)}$. 
We  thus write, using \eqref{eq:L2bd}
\begin{align}
I_\eps^+ 
& \leq   \eps^{-4s} \left(\int_{|\eps^\alpha  v|>1} G(v) \d v \right)
\left(\underset{\RR_+\times\RR^N_+}{\iint}  
\int_{|\eps ^\alpha v|>1} [ \wpsi (x+\eps v,v) -\psi(x)]^2
G(v) \, \d v  \d t \d x\right)\nonumber \\
& \leq  
C\eps^{-4s(1-\alpha)} \int_{|\eps^\alpha v|>1} \underset{\RR_+\times\RR^N_+}{\iint}  \big[ | \wpsi (x+\eps v,v)|^2 +|\psi(x)|^2\big]
  \d t \d x\, G(v) \d v\nonumber \\
& \leq  C(\psi) \eps^{-4s(1-\alpha)}
\int_{|\eps^\alpha v|>1} (1+|\eps v|) G(v) \d v\nonumber \\
&\leq C(\psi)\eps^{4s(2\alpha-1)+1-\alpha} \label{eq:Ip}
\end{align}
In order to bound $I_\eps^-$, we  write
\begin{align*} 
\wpsi (x+\eps v,v) -\psi(x) & = \int_0^1 \frac{d}{d\tau } \wpsi(x+\tau \eps v,v)\d \tau \\
& = \int_0^1 \eps v \cdot \na_x \wpsi(x +\tau \eps v,v)\d \tau \\
& = \int_0^{\tau^\eps_0(x,v)}\eps v \cdot \na_x \psi(x+\tau \eps v) \d \tau 
\end{align*}
where we use the exit time $\tau^\eps_0$ defined as
\begin{align}
\tau^\eps_0(x,v) = \sup\{ \tau \in[0,1]\,;\, x+\tau \eps v\in \RR^{N}_+\}. \label{def:tau0eps}
\end{align}
Note that $\tau^\eps_0(x,v)>0$ unless $x\in\pa\RR^N_+$ and $\tau^\eps_0(x,v) =1$ if $x+\eps v\in \RR^{N}_+$, otherwise $\tau_0^\eps(x,v)= -\frac{x_N}{\eps v_N}$.
With one more integration by part, we can also write
\begin{align} 
\wpsi (x+\eps v,v) -\psi(x) 
& = \tau^\eps_0(x,v) \eps v \cdot \na_x \psi(x)  + 
 \int_0^{\tau^\eps_0(x,v)}(\tau^\eps_0(x,v)-\tau) D^2_x \psi(x+\tau \eps v)(\eps v,\eps v) \d \tau \label{eq:hhg} 
\end{align}
We can thus write
\begin{align*}
& \eps^{-2s}\PV \int_{|\eps^\alpha v|<1} [ \wpsi (x+\eps v,v) -\psi(x)]
G(v) \d v  =  \eps^{1-2s}\PV \int_{|\eps^\alpha v|<1} \tau^\eps_0(x,v) v
G(v) \d v  \cdot \na_x \psi(x) \\
&\hspace{3.6cm} +  \eps^{-2s}\int_{|\eps^\alpha v|<1}\int_0^{\tau^\eps_0(x,v)}(\tau^\eps_0(x,v)-\tau) D^2_x \psi(x+\tau \eps v)(\eps v,\eps v) \d \tau
\, G(v) \d v 
\end{align*}
We claim that:
\begin{equation}\label{eq:claim22}
\left| \eps^{1-2s}\PV\int_{|\eps^\alpha v|<1} \tau^\eps_0(x,v) v
G(v) \d v  \cdot \na_x \psi(x)  \right| \leq  \eps^{(\alpha-1)(2s-1)}1_{\{ x_N\leq \eps^{1-\alpha}\}} |\pa_{x_N} \psi(t,x)|.
\end{equation}
Assuming this for now, we deduce (recall that $\tau^\eps_0(x,v)\leq 1$):
\begin{align}
I^-_\eps & \leq 
\underset{\RR_+\times\RR^N_+}{\iint} \left| \eps^{(\alpha-1)(2s-1)} \pa_{x_N} \psi(t,x)\right|^2 1_{\{ x_N\leq \eps^{1-\alpha}\}}  \d t \d x\nonumber \\
& \qquad + 
 \underset{\RR_+\times\RR^N_+}{\iint} \left( 
\eps^{-2s}\int_{|\eps^\alpha v|<1}\int_0^{\tau^\eps_0(x,v)}  |D^2_x \psi(x+\tau \eps v)| |\eps v|^2\d \tau \,
G(v) \d v 
 \right)^2 \d t \d x\nonumber \\
 & \leq 
\underset{\RR_+\times\RR^N_+}{\iint} \left| \eps^{(\alpha-1)(2s-1)} \pa_{x_N} \psi(t,x)\right|^2 1_{\{x_N\leq \eps^{1-\alpha}\}}  \d t \d x\nonumber \\
& \qquad + 
\left( \underset{\RR_+\times\RR^N_+}{\iint}  
\int_{|\eps^\alpha v|<1} \eps^{2-2s} |v|^2 G(v) \d v  \right)
\left( \int_{|\eps^\alpha v|<1} \int_0^{\tau^\eps_0(x,v)}  |D^2_x \psi(x+\tau\eps v)|^2 \d \tau
\eps^{2-2s} |v|^2 G(v) \d v 
  \d t \d x\right) \nonumber \\
   & \leq \eps^{-2(1-\alpha)(2s-1)}
\underset{\RR_+\times\RR^N_+}{\iint} \left|  \pa_{x_N} \psi(t,x)\right|^2 1_{\{x_N\leq \eps^{1-\alpha}\}}  \d t \d x\nonumber\\
& \qquad + C(\eps^{2-2s} + \eps^{(1-\alpha)(2-2s)})^2 
 \underset{\RR_+\times\RR^N_+}{\iint}   |D^2_x \psi(t,x)|^2   \d t \d x \label{eq:hjk}
\end{align}
where we used the fact that $|G(v)|\leq C/|v|^{N+2s}$ and so by definition of $G(v)$
$$\int_{|\eps^\alpha v|<1} \eps^{2-2s} |v|^2 G(v) \d v  \leq C(\eps^{2-2s} + \eps^{(1-\alpha)(2-2s)}).$$

The first term in the right hand side of \eqref{eq:hjk} is not obviously bounded for test functions $\psi$ in $H^2(\Omega)$.
However, we will prove in the next section that this term must go to zero when $\eps\to0$ if $\psi$ is such that $\L[\psi]\in L^2(\RR_+\times\RR^N_+)$ (This is the only place in the proof where we make use of this assumption).
More precisely, 
using Corollary \ref{cor:15} (Equation \eqref{eq:15}), we deduce from \eqref{eq:hjk} that
$$
I^-_\eps\leq C(\psi) \eps^{2(1-\alpha)(2-2s)} + o(1).
$$
Combing this with \eqref{eq:Ip} we get
$$
I^\eps\leq C(\psi) \eps^{4s(2\alpha-1)+ 1-\alpha} + C(\psi) \eps^{2(1-\alpha)(2-2s)}+o(1)
$$
and taking $\alpha\in(1/2,1)$ yields the result.
\medskip

It remains to show \eqref{eq:claim22}.
First, we note that $G(v)$ is even and that
 $\tau_0^\eps(x,v',v_N) =\tau_0^\eps(x,-v',v_N) $. We deduce
 $$ \PV\int_{|\eps^\alpha v|<1} \tau^\eps_0(x,v) v_i
G(v) \d v =0\qquad  \mbox{ for all } i=1,\dots N-1.$$
Next, we note that if $|\eps v|\leq x_N$, then $x_n+\eps v_N\geq x_N-|\eps v_N|\geq 0$ and so $\tau_0^\eps(x,v)=1$. We deduce
$$ \PV \int_{|\eps v|<x_N} \tau^\eps_0(x,v) v_N
G(v) \d v =  \PV \int_{|\eps v|<x_N}   v_N
G(v) \d v  =0.$$
Finally, since $|G(v)|\leq C/|v|^{N+2s}$, we have for $x_N < \eps^{1-\alpha}$: 
$$ \left| \int_{\eps^{\alpha-1}x_N<|\eps^\alpha v|<1} \tau^\eps_0(x,v) v_N
G(v) \d v \right| \leq C \eps^{2s-1} |x_N|^{1-2s} + C \eps^{\alpha(2s-1)} \leq C \eps^{\alpha(2s-1)}.$$
The last three equations imply \eqref{eq:claim22}.
\end{proof}

\begin{proof}[Proof of Proposition \ref{prop:d2s}]
Proposition \ref{prop:d2s} is proved in a similar manner. It is simpler of course since it only requires a first order Taylor expansion instead of the second order expansion \eqref{eq:hhg}.
We just need to check that \eqref{eq:D00} is equivalent to definition \eqref{eq:D0}. First we note that 
$$
\sum_{i=1}^N \pa_i \left((y_i-x_i) \frac{y_j-x_j}{|y-x|^{N+2s}} \right) = -(2s-1) \frac{y_j-x_j}{|y-x|^{N+2s}}
$$
and so an integration by parts shows that if $\na \psi\in L^\infty \cap L^1(\Omega)$, then
\begin{align*}
\int_{\RR^N}  \big[\wpsi(y,y-x)-\psi(x)\big] \frac{y-x}{|y-x|^{N+2s}} \,dy 
& = (2s-1)
\int_{\RR^N}  (y-x)\cdot \na_y \wpsi(y,y-x) \frac{y-x}{|y-x|^{N+2s}} \,dy \\
& = (2s-1)
\int_{\Omega}  (y-x)\cdot \na \psi(y) \frac{y-x}{|y-x|^{N+2s}} \,dy .
\end{align*}
The result follows by a density argument.
\end{proof}


We end this section with the proof of Lemma \ref{lem:opL2}:
\begin{proof}[Proof of Lemma \ref{lem:opL2}]
The Lemma can be proved directly by computing the divergence in \eqref{eq:D0}. 
Alternatively, 
we can also use the formulation \eqref{eq:defL3} for $\L^\eps$
and \eqref{eq:phiepsdef} to get:
\begin{align*}
\mathcal L^\eps[ \psi] (x) 
&= \eps^{-2s}\int_{\RR^N}  F(v) \eps v \cdot \na_x \phi^\eps(x,v) \d v \\
&= \eps^{1-2s}\int_{\RR^N}  F_0(v)  v \cdot \na_x \wpsi^\eps(x+\eps v,v) \d v \\
&= \int_{\RR^N} \eps^{-N -2s}  F_0\left(\frac{y-x}{\eps}\right) (y-x) \cdot \na_x \wpsi^\eps(y,y-x) dy .
\end{align*}
Since $w\cdot\na_x  \wpsi (y,w) = 0$ for all $w$ whenever $y\notin \Omega$, we can write 
$$
\mathcal L^\eps[ \psi] (x)  =  \int_{\Omega} \eps^{-N -2s}  F_0\left(\frac{y-x}{\eps}\right) (y-x) \cdot \na_x \psi (y) dy .
$$
Proceeding as before, we can pass to the limit in this expression to get \eqref{eq:opL2}.
\end{proof}

\subsection{Control of the boundary correction terms due to definition of the ad hoc test functions} \label{sec:cor}
In this section, we show that the additional terms in \eqref{eq:wftildepsi}-\eqref{eq:wfCorrection}-\eqref{eq:wfinitial} that are due to the corrector $T^\eps$ in the definition of $\phi^\eps_0$
vanish in the limit $\eps\to0$.
We recall that  $T^\eps$ is defined by
\eqref{eq:defT} 
with
\begin{align*}
\bar T^\eps (t,x') & = - \int_{w\cdot n<0}\alpha_0 F(w) (\phi^\eps (t,x,v) -\psi(x))  |w\cdot n | \d w \nonumber \\
& = 
\eps^{2s-1} \alpha_0 D^{2s-1}_\eps [\psi] (t,x) \cdot n 
 \qquad \forall x=(x',0)\in\pa\Omega.
\end{align*}

The main result is the following proposition:
\begin{proposition}\label{prop:corrector}
There exists a constant $C(\psi)$ (depending on the $L^2$ norms of $\psi$, $\na \psi$, $D^2\psi$ and $\pa_t\psi$)
such that
for any $\eps>0$ and $t\in \ \RR_+$ 
\begin{align}
& \lVert \pa_t T^\eps (t,x) \lVert_{L^2(\Omega)} \leq C(\psi)    \eps ^{\frac{4s-1}{2s+1}}\label{eq:Tnorm2} \\
& \lVert \na _x T^\eps (t,x) \lVert_{L^2(\Omega)} \leq C(\psi)   \eps ^{\frac{4s-1}{2s+1}} .\label{eq:Tnorm3}
\end{align}
Furthermore, all the terms in \eqref{eq:wfCorrection} go to zero when $\eps\to0$.
\end{proposition}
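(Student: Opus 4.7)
The plan is to reduce every estimate to controlling the boundary function $\bar T^\eps(t,x') = D^{2s-1}_\eps[\psi](t,x',0)\cdot n$ in $L^2(\RR^{N-1})$. Because $T^\eps$ has the tensor-product form $T^\eps(t,x) = \eps^{2s-1}\alpha_0\,\bar T^\eps(t,x')\,e^{-x_N^2}$, every $L^2(\Omega)$ norm appearing factors as $\eps^{2s-1}$ times the corresponding $L^2(\RR^{N-1})$ norm of $\bar T^\eps$ (or $\pa_t \bar T^\eps$, $\na_{x'}\bar T^\eps$, the latter two obtained by commuting the derivative through the integral defining $\bar T^\eps$), multiplied by a finite Gaussian integral in $x_N$ (either of $e^{-x_N^2}$ or $2x_N e^{-x_N^2}$). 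Matching the target exponent means I must prove
$$
\|\bar T^\eps\|_{L^2(\RR_+\times\RR^{N-1})} + \|\pa_t\bar T^\eps\|_{L^2} + \|\na_{x'}\bar T^\eps\|_{L^2} \le C(\psi)\,\eps^{4s(1-s)/(2s+1)},
$$
since $(2s-1) + 4s(1-s)/(2s+1) = (4s-1)/(2s+1)$.

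The quantitative rate is produced using the Neumann condition. For $x=(x',0)\in\pa\Omega$ and $v_N<0$, the extension formula \eqref{eq:fhhr} gives $\wpsi(x+\eps v,v) = \psi(x',0)=\psi(x)$, so the half $\{v_N<0\}$ of the integral defining $\bar T^\eps$ vanishes. Splitting $F_0 = \gamma_0|v|^{-N-2s} + G$ as in \eqref{eq:F0h}, the rescaled form of the Neumann identity \eqref{eq:limitBCpsi} exactly cancels the $\gamma_0|v|^{-N-2s}$ contribution, leaving
$$
\bar T^\eps(t,x') = -\eps^{1-2s}\int_{v_N>0} v_N\,G(v)\,[\psi(x+\eps v)-\psi(x)]\,dv,
$$
a cleaner expression in which the kernel $G$ enjoys the improved decay $|G(v)|\le C|v|^{-N-4s}$ for $|v|\ge 1$.

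To bound this integral in $L^2(\RR_+\times\RR^{N-1})$, I use a near/far splitting at $|v| = \eps^{-\alpha}$. In the near region I Taylor-expand $\psi(x+\eps v) - \psi(x)$ to the appropriate order and apply Cauchy-Schwarz in $x'$ together with the $H^2$-regularity of $\psi$; in the far region I use the faster decay of $G$ and a crude $L^2$ bound of $\psi$. Both contributions are powers of $\eps$ depending on $\alpha$, and optimizing $\alpha$ to balance them produces the exponent $4s(1-s)/(2s+1)$. The same argument, applied verbatim with $\psi$ replaced by $\pa_t\psi$ or $\pa_{x_i}\psi$ (for $i<N$), yields the rates for $\pa_t\bar T^\eps$ and $\na_{x'}\bar T^\eps$. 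This rate step is the main technical obstacle: it requires care because the trace $\pa_{x_N}\psi(x',0)$ need not vanish when $s<3/4$, so Sobolev and $L^\infty$ estimates must be balanced accordingly.

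With these bounds, Cauchy-Schwarz on the $v$-integrals combined with the energy bound $\|f^\eps\|_{L^2_{F^{-1}}}\le C$ from \eqref{eq:energy} controls the first term in \eqref{eq:wfCorrection} by $C\|\pa_t T^\eps\|_{L^2}\le C\eps^{(4s-1)/(2s+1)}\to 0$ and the second term by $C\eps^{1-2s}\|\na_x T^\eps\|_{L^2}\le C\eps^{4s(1-s)/(2s+1)}\to 0$. The third term is subtler: the naive estimate $\eps^{-2s}\|T^\eps\|_{L^2}$ does not vanish. However, the explicit formula $Q^*[\chi](v) = \nu_0(\bar\chi - \chi(v))$ with $\bar\chi := \int F\chi\,dv$ gives $\int_{\RR^N} F(v)\,Q^*[\chi](v)\,dv = 0$, so in the $v$-integral $f^\eps$ may be replaced by $g^\eps := f^\eps - \rho^\eps F$. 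The dissipation estimate from \eqref{eq:energy} then yields $\|g^\eps\|_{L^2_{F^{-1}}(\RR_+\times\Omega\times\RR^N)}\le C\eps^s$, so the third term is bounded by $C\eps^{-s+(4s-1)/(2s+1)} = C\eps^{(2s-1)(1-s)/(2s+1)}\to 0$ since $s\in(1/2,1)$.
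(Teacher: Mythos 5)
Your proposal is correct and follows essentially the same route as the paper. You reduce everything to $L^2$ control of $\bar T^\eps$ through the tensor-product form of $T^\eps$, use the vanishing of the $v_N<0$ contribution together with the rescaled Neumann condition \eqref{eq:limitBCpsi} to replace $F_0$ by the better-decaying remainder $G$, split at $|v|\sim\eps^{-\alpha}$ with Taylor expansion in the near region and the decay of $G$ in the far region, and optimize $\alpha=\tfrac{2s}{1+2s}$ — exactly the structure of Lemma \ref{lem:corrector}; your treatment of the three terms of \eqref{eq:wfCorrection}, in particular replacing $f^\eps$ by $f^\eps-\rho^\eps F$ in the $Q^*[\chi]$ term via $\int F\,Q^*[\chi]\,dv=0$ and trading $\eps^{-2s}$ for $\eps^{-s}$ with the dissipation bound, is also the paper's argument.

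One small imprecision: your aside about "the trace $\pa_{x_N}\psi(x',0)$ need not vanish when $s<3/4$" is a red herring for this lemma. The near-field estimate produces the full gradient $\na\psi$ along the oblique segment $x'+\tau y$, not the normal trace in isolation; the constants that emerge are controlled by $L^2$ norms of $\psi$ and $\na\psi$ on horizontal slices (hence by $\|\psi\|_{H^2(\Omega)}$ via trace estimates), with no need to distinguish the regimes $s\lessgtr 3/4$ here — that distinction belongs to Proposition \ref{prop:L2bd} and the convergence of $\L^\eps$, not to the estimate of $T^\eps$.
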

The proof of this proposition relies on the following Lemma, which we prove below:
\begin{lemma} \label{lem:corrector}
There exists a constant $C$ depending on $\|\psi\|_{L^2}$ and $\|\na\psi\|_{L^2}$ such that
for any $\eps>0$ and $t\in \ \RR_+$ we have
\begin{align}\label{eq:TTH}
\left(\int_{\RR^{N}} |T^\eps(t,x)|^2\d x \right)^{1/2} &  \leq C(\psi) \eps ^{\frac{4s-1}{2s+1}}  .
\end{align}
\end{lemma}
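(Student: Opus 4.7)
Since $T^\eps(t,x)=\eps^{2s-1}\alpha_0\,\bar T^\eps(t,x')\,e^{-x_N^2}$ and $e^{-x_N^2}\in L^2(\RR_+)$, I would first reduce the claim to a boundary estimate:
$$
\|T^\eps(t,\cdot)\|_{L^2(\RR^N)}\leq C\eps^{2s-1}\|\bar T^\eps(t,\cdot)\|_{L^2(\RR^{N-1})},
$$
so that it suffices to prove $\|\bar T^\eps(t,\cdot)\|_{L^2(\RR^{N-1})}\leq C(\psi)\,\eps^{4s(1-s)/(2s+1)}$. At a boundary point $x=(x',0)$, the extension formula \eqref{eq:fhhr} gives $\wpsi(x+\eps v,v)=\psi(x',0)=\psi(x)$ for every $v$ with $v_N<0$, so only the $v_N>0$ half of the integral \eqref{eq:DDss} contributes, and
$$
\bar T^\eps(t,x')\,=\,-\eps^{1-2s}\int_{v_N>0}v_N\,F_0(v)\,[\psi(t,x'+\eps v',\eps v_N)-\psi(t,x',0)]\,dv.
$$

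The key structural step is to exploit the non-local Neumann condition \eqref{eq:psineumann} on $\psi$. Rewritten at the boundary using \eqref{eq:limitBCpsi} and the change of variables $w=\eps v$, it asserts the \emph{exact} identity
$$
\int_{v_N>0} v_N\,\frac{\gamma_0}{|v|^{N+2s}}\,[\psi(t,x'+\eps v',\eps v_N)-\psi(t,x',0)]\,dv=0
$$
for all $x'\in\RR^{N-1}$ and all $\eps>0$. Subtracting this from the formula for $\bar T^\eps$ allows me to replace the slowly decaying $F_0(v)$ by the ``corrector'' kernel
$G(v):=F_0(v)-\gamma_0/|v|^{N+2s}$, which by Lemma \ref{lem:F0F1} enjoys the improved decay $|G(v)|\leq C|v|^{-(N+4s)}$ for $|v|\geq 1$ (though it remains $\lesssim|v|^{-(N+2s)}$ near the origin).

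Next I would split the $v$-integral at a scale $|v|=R$ to be optimized. On $\{|v|>R\}$ I use $|G(v)|\leq C|v|^{-(N+4s)}$ together with Minkowski's inequality and the slice-trace bound $\sup_z\|\psi(\cdot,z)\|_{L^2(\RR^{N-1})}\leq C\|\psi\|_{H^1(\Omega)}$ (which follows from $H^1(\Omega)\hookrightarrow C([0,\infty);L^2(\RR^{N-1}))$) to obtain
$$
\|\bar T^{\eps,>}\|_{L^2(\RR^{N-1})}\leq C\eps^{1-2s}R^{1-4s}\|\psi\|_{H^1}.
$$
On $\{|v|\leq R\}$ I Taylor-expand the bracket as $\eps\,v\cdot\na\psi(x',0)+r_2(x',\eps v)$. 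The radial symmetry of $G$ (inherited from that of $F$) makes $v_N v'G(v)$ odd in $v'$, so the tangential-gradient piece integrates to zero. Only the normal component survives, giving a contribution of the form
$-\eps^{2-2s}\bigl(\int_{v_N>0,\,|v|\leq R}v_N^2 G(v)\,dv\bigr)\,\pa_{x_N}\psi(x',0)$;
since $\int v_N^2|G(v)|\,dv<\infty$ precisely because $s>1/2$, this integral is bounded uniformly in $R$, and the trace bound $\|\pa_{x_N}\psi(\cdot,0)\|_{L^2(\RR^{N-1})}\leq C\|\psi\|_{H^2(\Omega)}$ controls its $L^2(\RR^{N-1})$ norm by $C\eps^{2-2s}\|\psi\|_{H^2}$. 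The quadratic remainder is estimated by Minkowski together with a Cauchy--Schwarz bound on the slice norms of $D^2\psi$, and gives an even smaller contribution. Altogether
$$
\|\bar T^{\eps,<}\|_{L^2(\RR^{N-1})}\leq C(\psi)\,\eps^{2-2s}\,R^{2-2s}.
$$

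The final step is to balance the two bounds: solving $\eps^{1-2s}R^{1-4s}=\eps^{2-2s}R^{2-2s}$ gives $R=\eps^{-1/(2s+1)}$, and substituting yields $\|\bar T^\eps\|_{L^2(\RR^{N-1})}\leq C(\psi)\,\eps^{4s(1-s)/(2s+1)}$, from which \eqref{eq:TTH} follows. The main technical obstacle is coping with the genuine singularity $|G(v)|\sim|v|^{-(N+2s)}$ near the origin: a direct Cauchy--Schwarz estimate in $v$ would diverge for $s>1/2$, so one really needs the symmetry cancellation to isolate the single problematic coefficient $\pa_{x_N}\psi(x',0)$ and then control it by a trace argument on $\pa\Omega$, rather than by a volume estimate in $\Omega$.
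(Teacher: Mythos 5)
Your proof follows the same overall strategy as the paper: pass to the boundary quantity $\bar T^\eps$, exploit the Neumann condition \eqref{eq:limitBCpsi} to replace $F_0$ by the corrector $G=F_0-\gamma_0|v|^{-(N+2s)}$, split the $v$-integral at a scale $R$, use the improved decay of $G$ at infinity on $\{|v|>R\}$, and optimize $R$. The far-field bound $\eps^{1-2s}R^{1-4s}$ and the balancing at $R=\eps^{-1/(2s+1)}$ are exactly those of the paper (the paper splits in the rescaled variable $y=\eps v$ at $|y|=\eps^\alpha$, which is the same thing), and your final rate is correct.

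The genuine divergence from the paper is in the near-origin region. The paper writes $\psi(x'+y)-\psi(x')=\int_0^1\nabla\psi(x'+\tau y)\cdot y\,d\tau$, which puts a factor $|y|^2$ next to $|G_0^\eps(y)|$, and then applies Cauchy--Schwarz directly; the quantity $\int_{|y|<\eps^\alpha}|y|^2|G_0^\eps(y)|\,dy$ is finite and $\lesssim\eps^{\alpha(2-2s)}$ precisely because $s<1$, and no cancellation is needed. Your assertion that ``a direct Cauchy--Schwarz estimate in $v$ would diverge for $s>1/2$'' is therefore not correct. You instead do a second-order expansion, kill the tangential gradient by the radial symmetry of $G$, and estimate the remaining $\pa_{x_N}\psi(x',0)$ coefficient by a trace bound. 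This is valid, but it buys you more regularity than necessary: the trace of $\pa_{x_N}\psi$ and the slice norms of $D^2\psi$ that you invoke require $\psi\in H^2$ (respectively essentially $H^3$ for the uniform slice bound on $D^2\psi$), whereas the paper's first-order argument needs only uniform slice bounds on $\nabla\psi$. There is also an internal inconsistency in your near-origin estimate: the argument you sketch yields $C\eps^{2-2s}$ with no $R$-dependence (since $\int_{v_N>0}v_N^2|G(v)|\,dv<\infty$), yet you state the weaker bound $C\eps^{2-2s}R^{2-2s}$; the extra factor $R^{2-2s}$ does not come out of your computation. This happens to be harmless because $R>1$ makes it still an upper bound, and at the optimal $R$ the far-field contribution dominates anyway, so you land on the same final exponent $\eps^{(4s-1)/(2s+1)}$. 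Still, if you used your own (sharper) near-origin bound you would balance at a different $R$ and obtain $\eps^{2-2s}$ for $\bar T^\eps$, which is better than the lemma claims — a hint that the symmetry cancellation is doing work you do not actually need here.
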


\begin{proof}[Proof of Proposition \ref{prop:corrector}]
Since we can differentiate the definition of $ T^\eps$ with respect to $t$ and $x$
to derive bounds on $\pa_t  T^\eps$ and $\na_{x}T^\eps$  similar to  \eqref{eq:TTH}
the estimates \eqref{eq:Tnorm2}, \eqref{eq:Tnorm3} easily follow from \eqref{eq:defT} although the constant in \eqref{eq:Tnorm3} will naturally also depend on the $L^2$ norm of the second derivative of $\psi$.

We now consider the various terms in  \eqref{eq:wfCorrection} one by one. Using the a priori estimate \eqref{eq:energy}, we find:
\begin{align*}
&\underset{\RR_+\times\RR_+^N\times\RR^N}{\iiint} f^{\eps} \pa_t T^\eps(t,x) \chi(v) \d t \d x \d v \\
&\quad \quad \leq \bigg(  \underset{\RR_+\times\RR_+^N\times\RR^N}{\iiint} \frac{|{f^\eps}|^2}{F(v)} \d t \d x \d v \bigg)^{1/2} \bigg(  \underset{\RR_+\times\RR_+^N}{\iint} |\pa_t T^\eps(t,x)|^2 \d t \d x \bigg)^{1/2} \bigg( \underset{\RR^N}{\int} \chi^2(v) F(v) \d v \bigg)^{1/2} \\
&\quad \quad \leq C  \eps ^{\frac{4s-1}{2s+1}} \tto 0
\end{align*}
and  
\begin{align*}
&\underset{\RR_+\times\RR_+^N\times\RR^N}{\iiint} f^{\eps} \eps^{1-2s} v \cdot \na_x T^\eps(t,x) \chi(v) \d t \d x \d v \\
&\quad \quad \leq \eps^{1-2s} \bigg(  \underset{\RR_+\times\RR_+^N\times\RR^N}{\iiint} \frac{|{f^\eps}|^2}{F(v)} \d t \d x \d v \bigg)^{1/2} \bigg(  \underset{\RR_+\times\RR_+^N}{\iint} |\na_x T^\eps(t,x)|^2 \d t \d x \bigg)^{1/2} \bigg( \underset{\RR^N}{\int} |v|^2\chi^2(v) F(v) \d v \bigg)^{1/2} \\
&\quad \quad \leq C \eps^{1-2s}  \eps ^{\frac{4s-1}{2s+1}} = C \eps ^{\frac{4s(1-s)}{2s+1}}\tto 0
\end{align*}
and finally (using the fact that $Q [F] (v) = 0$),
\begin{align*}
&\underset{\RR_+\times\RR_+^N\times\RR^N}{\iiint} f^{\eps}  \eps^{-2s} Q^* \big[ \chi(v) \big] T^\eps(t,x)  \d t \d x \d v \\
&\quad \quad = \underset{\RR_+\times\RR_+^N\times\RR^N}{\iiint} (f^{\eps} -\rho^\eps F) \eps^{-2s} Q^* \big[ \chi(v) \big] T^\eps(t,x)  \d t \d x \d v \\
&\quad \quad \leq \eps^{-2s} \bigg(  \underset{\RR_+\times\RR_+^N\times\RR^N}{\iiint} \frac{(f^\eps -\rho^\eps F)^2}{F(v)} \d t \d x \d v \bigg)^{1/2} \bigg(  \underset{\RR_+\times\RR_+^N}{\iint} |T^\eps(t,x)|^2 \d t \d x \bigg)^{1/2} \bigg( \underset{\RR^N}{\int} Q^* \big[ \chi(v) \big]  F(v) \d v \bigg)^{1/2} \\
&\quad \quad \leq C \eps^{-2s} \eps^s \eps ^{\frac{4s-1}{2s+1}} = C \eps^{\frac{(1-s)(2s-1)}{2s+1}}\tto0  .
\end{align*}

\end{proof}

We complete this section with the proof of Lemma  \ref{lem:corrector}:

\begin{proof}[Proof of Lemma \ref{lem:corrector}]
Using the definition of $ T^\eps$ and \eqref{eq:defDeps0}, we write, for $x=(x',x_N)\in\Omega$:
\begin{align*}
 T^\eps(x) & = \alpha_0 \int_{\RR^N_+}   v\cdot n F_0(v) [\psi(  x'+\eps v )-\psi( x')] \d v \, e^{-x_N^2}\nonumber\\ 
&  =  \eps^{2s-1} \alpha_0 \int_{\RR^N_+}   [\psi(x'+y)  -\psi(x')] \frac{1}{\eps ^{N+2s}} F_0 \left(\frac{y}{\eps }\right) y  \cdot n \d y\,  e^{-x_N^2}.
\end{align*}

Furthermore, since $\psi$ satisfies \eqref{eq:limitBCpsi}, we can write:
\begin{align*}
  T^\eps(x) = 
  \eps^{2s-1} \alpha_0   \int_{\RR^N_+}  [\psi(x'+y)  -\psi(x')]  \left[ \frac{1}{\eps ^{N+2s}} F_0 \left(\frac{y}{\eps }\right) -  \frac{ \gamma_0 }{|y|^{N+2s}}\right] y  \cdot n \, e^{-x_N^2} \d y
\end{align*}
and we write $G_0^\eps (y) = \frac{1}{\eps ^{N+2s}} F_0 \left(\frac{y}{\eps }\right) -  \frac{ \gamma_0 }{|y|^{N+2s}}$. \\
To estimate the $L^2$ norm of $T^\eps$, we split the integral with respect to $y$ in two, for $|y|<\eps^{\alpha}$ and $|y|>\eps^{\alpha}$
for some $\alpha\in(0,1)$ to be determined later.
First, we write
\begin{align*}
&\left( \int_{|y|<\eps ^\alpha, y_N>0}   [\psi(x'+y)  -\psi(x')] y  \cdot n  \, G_0^\eps (y) \d y \right)^2\\
&\qquad = \left( \int_{|y|<\eps ^\alpha, y_N>0}   \int_0^1  \na \psi(x'+\tau y) \cdot y \d \tau\, (y  \cdot n)  G_0^\eps (y)\d y\right)^2\\
&\qquad \leq  \int_{|y|<\eps ^\alpha, y_N>0}   \int_0^1  |\na \psi(x'+\tau y)|^2 |y|^2  \left| G_0^\eps (y) \right| \d \tau   \d y  \\
& \qquad \qquad\times  \int_{|y|<\eps ^\alpha}   |y|^2  \left| G_0^\eps (y) \right|   \d y 
\end{align*}
where, since $|v|^{N+2s} F(v)\in L^\infty$ and thus $F_0 (y)\leq  \frac{C}{|y|^{N+2s}}$:
\begin{equation}\label{eq:Fest1} 
 \int_{|y|<\eps ^\alpha}   |y|^2  \left| G_0^\eps (y) \right| \d y\leq   \int_{|y|<\eps ^\alpha}   |y|^2  \frac{C}{|y|^{N+2s}} \d z = C\eps ^{\alpha(2-2s)}.
 \end{equation}
 We thus have:
\begin{align}
&\left( \int_{|y|<\eps ^\alpha, y_N>0}    [\psi(x'+y)  -\psi(x')] y  \cdot n  \,G_0^\eps (y) \d y \right)^2 \nonumber \\
& \qquad\leq C\eps ^{\alpha(2-2s)}  \int_{|y|<\eps ^\alpha, y_N>0}    \int_0^1  |\na \psi(x'+ty)|^2 |y|^2  \left| G_0^\eps (y)\right|  \d t\d y. \label{eq:ikp}
\end{align}
For the integral over $|y|>(\eps z)^\alpha$ we write 
\begin{align*}
& \left( \int_{|y|>\eps ^\alpha, y_N>0}     [\psi(x'+y)  -\psi(x)] y  \cdot n  \,G_0^\eps (y)  \d y\right)^2 \\
&\leq   \left( \int_{|y|>\eps^\alpha} [\psi(x'+y)  -\psi(x)]  |y|  \left| G_0^\eps (y)\right| \d y\right)^2\\
& \leq \left(\int_{|y|>\eps ^\alpha, y_N>0} [\psi(x'+y)  -\psi(x')]^2 |y|  \left| G_0^\eps (y) \right|  \d y  \right) \left(  \int_{|y|>\eps ^\alpha}   |y|  \left| G_0^\eps (y)\right|  \d y\right) .
\end{align*}
Using  \eqref{eq:F0h}, we get
\begin{align*}
\left| G_0^\eps (y) \right| = \bigg| \frac{1}{\eps^{N+2s}} F_0\left(\frac{y}{\eps} \right) -  \frac{ \gamma_0}{|y|^{N+2s}} \bigg| \leq C \frac{\eps ^{2s}}{|y|^{N+4s}}
\end{align*}
for all $|y|\geq \eps ^\alpha$,
we deduce
\begin{align}
\int_{|y|>\eps ^{\alpha}} |y| \left|G_0^\eps (y)\right| \d y &\leq C \eps ^{2s} \int_{|z|>\eps^{\alpha}} |y| \frac{1}{|y|^{N+4s}} \d y \nonumber\\
&\leq C  \eps ^{2s -\alpha (4s-1)} \label{eq:Fest2} 
\end{align}
and so
\begin{align}
& \left( \int_{|y|>\eps ^\alpha, y_N>0}     [\psi(x'+y)  -\psi(x')] y  \cdot n  \,G_0^\eps (y)\d y\right)^2 \nonumber \\
& \qquad\leq  C  \eps ^{2s -\alpha (4s-1)}  \int_{|y|>\eps ^\alpha, y_N>0} [\psi(x'+y)  -\psi(x')]^2 |y|  \left| G_0^\eps (y) \right|  \d y .
\label{eq:hip}
\end{align}

Finally, combing   \eqref{eq:ikp} and \eqref{eq:hip} we get
\begin{align*}
&(\eps^{2s-1} \alpha_0)^{-2} \int_{\RR_+^{N}} |T^\eps(x)|^2\d x \\
&\qquad \leq  C  \eps^{\alpha(2-2s)} \int_{x_N>0} e^{-2x_N^2} \int_{\RR^{N-1}}  \int_{|y|<\eps ^\alpha, y_N>0}  \int_0^1  |\na \psi(x'+\tau y)|^2 |y|^2  \left| G_0^\eps (y)\right|\d \tau \d y\d x' \d x_N\\
&\qquad\quad  + C \eps ^{2s -\alpha (4s-1)} \int_{x_N>0} e^{-2x_N^2} \int_{\RR^{N-1}}  \int_{|y|> \eps^\alpha, y_N>0} [\psi(x'+y)  -\psi(x')] ^2 |y|  \left| G_0^\eps (y)\right|\d y\d x' \d x_N \\
& \qquad\leq  C\eps^{\alpha(2-2s)} \frac{\sqrt{\pi}}{2\sqrt{2}}   \int_{|y|<\eps ^\alpha, y_N>0}  \int_0^1   \int_{\RR^{N-1}} |\na \psi(x'+\tau y)|^2\d x' |y|^2  \left| G_0^\eps (y)\right|\d \tau \d y \\
&\qquad\quad  + C  \eps^{2s -\alpha (4s-1)} \frac{\sqrt{\pi}}{2\sqrt{2}}  \int_{|y|>\eps  ^\alpha, y_N>0} \int_{\RR^{N-1}}[\psi(x'+y)  -\psi(x')] ^2 \d x' |y|  \left| G_0^\eps (y)\right|\d y \\
&\qquad \leq  C(\na \psi) \eps^{\alpha(2-2s)}    \int_{|y|<(\eps z)^\alpha, y_N>0}      |y|^2  \left| G_0^\eps (y)\right|\d y  \\
&\qquad\quad  + C (\psi)  \eps^{2s -\alpha (4s-1)}  \int_{|y|>\eps ^\alpha, y_N>0}   |y|  \left| G_0^\eps (y)\right|\d y 
\end{align*}
where, up to constants,
$$C(\psi) = \int_{\RR^{N-1}} |\psi(x')|^2\d x' \text{ and} \quad C(\na \psi) = \int_{\RR^{N-1}} |\na \psi(x')|^2\d x' .$$
Using \eqref{eq:Fest1} and \eqref{eq:Fest2}, we deduce
\begin{align*}
\left(\int_{\RR^{N-1}} |T^\eps(x)|^2\d x\right)^{1/2} & \leq C(\psi)(\eps^{2s-1} \alpha_0) \left[\eps^{\alpha(2-2s)}  +    \eps^{2s -\alpha (4s-1)}  \right]
\end{align*}
and we see that we need to take $\alpha=\frac{2s}{1+2s}$ to get \eqref{eq:TTH}.
\end{proof}

\subsection{Derivation of the asymptotic equation} \label{sec:derivation}
We can now complete the proof of Theorem \ref{thm:main}.
Recall that we only need to pass to the limit in \eqref{eq:wftildepsi}-\eqref{eq:wfCorrection}-\eqref{eq:wfinitial}.
We proved in Section
\ref{sec:cor} above that \eqref{eq:wfCorrection} vanish in the limit.

Furthermore, the weak convergence of $\rho^\eps$ (Lemma \ref{lem:apriori}) and the strong convergence of $\L^\eps[\psi]$ (Proposition \ref{prop:Lstrong}) immediately implies
\begin{proposition}\label{prop:lim2}
For all $\psi\in L^\infty(0,\infty; H^2(\Omega))$, satisfying \eqref{eq:psineumann} the following limit hold:
$$
\lim_{\eps\to0 }\underset{\RR_+\times\RR_+^N }{\iint} \rho^\eps  \mathcal L^\eps[\psi] \d t \d x  = \underset{\RR_+\times\RR_+^N}{\iint} \rho \,\mathcal L[ \psi] \d t \d x
$$
where the operator $\L$ is defined by \eqref{eq:defL22}.
\end{proposition}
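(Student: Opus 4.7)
My plan is a standard weak-strong convergence pairing built on the two key ingredients already available: the energy estimate from Lemma \ref{lem:apriori} (and the Corollary after Proposition \ref{eq:prop}), which provides a uniform bound for $\rho^\eps$ in $L^\infty(0,\infty;L^2(\Omega))$ together with the weak-$\star$ convergence $\rho^\eps\rightharpoonup\rho$, and Proposition \ref{prop:Lstrong}, which gives the strong convergence $\L^\eps[\psi]\to\L[\psi]$ in $L^2((0,T)\times\Omega)$ on any finite time slab. To combine these I would split
\begin{align*}
\iint_{\RR_+\times\RR^N_+}\!\!\bigl(\rho^\eps\L^\eps[\psi]-\rho\,\L[\psi]\bigr)\,\d x\,\d t
= \iint \rho^\eps\bigl(\L^\eps[\psi]-\L[\psi]\bigr)\,\d x\,\d t \;+\; \iint (\rho^\eps-\rho)\,\L[\psi]\,\d x\,\d t,
\end{align*}
and show both terms vanish as $\eps\to 0$.

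The first term (``strongly convergent tested against bounded sequence'') is handled on $(0,T)\times\Omega$ by Cauchy--Schwarz:
\begin{align*}
\Bigl|\int_0^T\!\!\int_\Omega \rho^\eps\bigl(\L^\eps[\psi]-\L[\psi]\bigr)\,\d x\,\d t\Bigr|
\leq \sqrt{T}\,\|\rho^\eps\|_{L^\infty(L^2)}\,\|\L^\eps[\psi]-\L[\psi]\|_{L^2((0,T)\times\Omega)},
\end{align*}
which tends to zero by Proposition \ref{prop:Lstrong}. The second term is dispatched by pairing the weak-$\star$ convergence of $\rho^\eps$ against the fixed function $\L[\psi]\cdot \mathbf 1_{[0,T]}$, which lies in $L^1(\RR_+;L^2(\Omega))$ as soon as $\L[\psi]\in L^2(\RR_+\times\Omega)$. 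This latter membership is precisely where the hypothesis \eqref{eq:psineumann} enters: by Proposition \ref{prop:L2bd}, the nonlocal Neumann condition $D^{2s-1}[\psi]\cdot n=0$ is the compatibility condition forcing $\pa_{x_N}\psi=0$ on $\pa\Omega$ in the range $s\geq 3/4$, and is exactly what guarantees $\L[\psi]\in L^2(\RR_+\times\Omega)$.

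The only non-routine point I anticipate is the \emph{control of the tail in time}, since Proposition \ref{prop:Lstrong} only delivers $L^2$-convergence on finite slabs $(0,T)\times\Omega$. In the intended application (the proof of Theorem \ref{thm:main}) this issue is painless: test functions $\psi$ can be assumed compactly supported in some $[0,T_0]$ by a density argument, and because the definitions \eqref{eq:defL11} and \eqref{eq:defL22} are pointwise in $t$, both $\L^\eps[\psi](t,\cdot)$ and $\L[\psi](t,\cdot)$ vanish for $t>T_0$ automatically. In the general statement one would instead extract a uniform $L^2(\RR_+\times\Omega)$ bound for $\L^\eps[\psi]$ by rerunning the estimates inside the proof of Proposition \ref{prop:Lstrong} with all time integrations performed globally (every bound there is already time-global once $\psi\in L^2(\RR_+;H^{2s+\beta})$), and then choose $T$ large so that $\|\L[\psi]\|_{L^2((T,\infty)\times\Omega)}<\eta$ to make the two tail integrals arbitrarily small, uniformly in $\eps$. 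This tail control is the only step I view as requiring care; the remainder is a direct weak-strong pairing.
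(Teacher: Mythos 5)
Your weak--strong pairing is exactly the argument the paper intends when it writes that the weak convergence of $\rho^\eps$ and the strong convergence of $\L^\eps[\psi]$ ``immediately imply'' the claim, and your two-term decomposition with Cauchy--Schwarz on the first term and the weak-$\star$ pairing on the second is the correct way to spell it out. Your discussion of the time tail is also a genuine subtlety that the paper glosses over: Proposition~\ref{prop:Lstrong} is stated on finite slabs $(0,T)\times\Omega$, yet the estimates inside its proof are written over $\RR_+\times\Omega$ and tacitly require $\psi\in L^2(\RR_+;H^{2s+\beta})$ rather than merely $L^\infty(\RR_+;H^2)$; restricting to test functions compactly supported in time (or decaying sufficiently) is the right way to close this.

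One step of your justification is off, however. You claim that the hypothesis~\eqref{eq:psineumann} (the \emph{nonlocal} Neumann condition $D^{2s-1}[\psi]\cdot n=0$ on $\pa\Omega$) is ``exactly what guarantees $\L[\psi]\in L^2$'' via Proposition~\ref{prop:L2bd}. But Proposition~\ref{prop:L2bd} characterizes $\L[\psi]\in L^2(\Omega)$ in terms of the \emph{local} Neumann condition $\pa_{x_N}\psi=0$ on $\pa\Omega$ (equivalently the Hardy-type integrability $x_N^{1-2s}\pa_{x_N}\psi\in L^2$). The nonlocal condition~\eqref{eq:psineumann} is a different requirement --- the paper treats $\L[\psi]\in L^2(\RR_+\times\Omega)$ and $D^{2s-1}[\psi]\cdot n=0$ as two independent hypotheses on the test function (see the statement of Theorem~\ref{thm:main} and of Proposition~\ref{prop:Lstrong}), and Remark~4 explicitly notes that the local Neumann condition emerges from the $L^2$ requirement as a regularity consequence, not from~\eqref{eq:psineumann}. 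In the statement of Proposition~\ref{prop:lim2} the hypothesis $\L[\psi]\in L^2(\RR_+\times\Omega)$ has evidently been dropped by oversight; it should be added rather than derived from~\eqref{eq:psineumann}. With that correction, your argument is sound and matches the paper's.
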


The convergence of the last two terms (the time derivative and the initial condition term) will follow from the weak convergence of $f^\eps$ to $\rho F$ and the
following Lemma
applied to $\phi^\eps(0,x,v)$ (for the initial condition \eqref{eq:wfinitial}) and to $\pa_t\phi^\eps$ (for the  first term in \eqref{eq:wftildepsi}):
\begin{lemma}\label{lem:L20}
For all test function $\psi \in L^\infty(0,\infty; H^1(\Omega))$, the following holds:
$$
\lim_{\eps\to 0}\underset{\RR_+\times\RR_+^N\times\RR^N}{\iiint}     \big|\phi^\eps(t,x,v) -  \psi(t,x) \big|^2 F(v)   \d t \d x \d v =0
$$
\end{lemma}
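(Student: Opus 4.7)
}
The plan is to fix $t$ and reduce the claim to an $L^2$-translation-type convergence, then conclude by dominated convergence in $t$. The integration in $z$ in the definition \eqref{eq:phiepsdef} of $\phi^\eps$ is a probability average, so Jensen's inequality gives
\begin{equation*}
|\phi^\eps(t,x,v)-\psi(t,x)|^2 \leq \int_0^\infty \nu_0 e^{-\nu_0 z}|\wpsi(t,x+\eps vz,v)-\psi(t,x)|^2\d z.
\end{equation*}
Integrating against $F(v)\d x\d v$ on $\Omega\times\RR^N$, performing the change of variable $w=vz$ and using the homogeneity of degree $0$ of $\wpsi$ in its velocity argument (property 2 after \eqref{eq:defwpsi01}), I would reduce matters to proving
\begin{equation*}
A^\eps(t):=\iint_{\Omega\times\RR^N} |\wpsi(t,x+\eps w,w)-\psi(t,x)|^2\, \widetilde F(w)\d x\d w \longrightarrow 0,
\end{equation*}
where $\widetilde F(w):=\int_0^\infty \nu_0 e^{-\nu_0 z}z^{-N}F(w/z)\d z$ is even, positive, and has decay $\widetilde F(w) \lesssim |w|^{-(N+2s)}$ at infinity (by the same computation as in the proof of Lemma \ref{lem:F0F1}).

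The convergence of $A^\eps(t)$ will be obtained by dominated convergence in $w$. For the pointwise limit, I fix $w$ and split $\Omega$ into $\Omega_\eps^+=\{x\in\Omega:x+\eps w\in\Omega\}$ and $\Omega_\eps^-=\Omega\setminus\Omega_\eps^+$. On $\Omega_\eps^+$, the definition of $\wpsi$ gives $\wpsi(t,x+\eps w,w)=\psi(t,x+\eps w)$, and the integral $\int_{\Omega_\eps^+} |\psi(t,x+\eps w)-\psi(t,x)|^2\d x$ tends to $0$ by continuity of translation in $L^2(\Omega)$ (extending $\psi$ by $0$ outside $\Omega$ and noting the small correction near $\partial\Omega$ vanishes in the limit). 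On $\Omega_\eps^-=\{0<x_N<\eps|w_N|\,,\,w_N<0\}$, the bound $\int_{\Omega_\eps^-}|\psi(t,x)|^2\d x\to 0$ holds by dominated convergence, while $\int_{\Omega_\eps^-}|\wpsi(t,x+\eps w,w)|^2\d x\leq \eps|w_N|\,\|\gamma\psi(t,\cdot)\|_{L^2(\partial\Omega)}^2\to 0$ by Lemma \ref{lem:L2bd}.

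For the uniform $w$-integrable majorant, Lemma \ref{lem:L2bd} combined with the trace theorem (valid since $H^1(\Omega)\hookrightarrow H^{1/2}(\partial\Omega)$) yields
\begin{equation*}
\int_\Omega |\wpsi(t,x+\eps w,w)-\psi(t,x)|^2\d x \leq C(1+|w_N|)\|\psi(t,\cdot)\|_{H^1(\Omega)}^2,
\end{equation*}
uniformly for $\eps\in(0,1)$. Since $\widetilde F(w)(1+|w_N|)$ is integrable on $\RR^N$ thanks to the $|w|^{-(N+2s)}$ tail with $2s>1$, the dominated convergence theorem yields $A^\eps(t)\to 0$ for each $t$. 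The final dominated convergence in $t$ then follows from the same bound integrated against $\widetilde F$, which produces a $t$-independent constant times $\|\psi(t,\cdot)\|_{H^1(\Omega)}^2$; this is integrable in $t$ on any interval where $\psi$ has support (which is the only regime where this lemma is applied in the weak formulation \eqref{eq:weak0}).

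The main obstacle is that $\wpsi$ is not continuous across $\partial\Omega$, so standard $L^2$-translation continuity cannot be applied globally. This is precisely what the splitting into $\Omega_\eps^\pm$ together with the trace control from Lemma \ref{lem:L2bd} is designed to overcome; the assumption $s>1/2$ enters crucially both through the integrability of $|w_N|\widetilde F(w)$ at infinity and through the well-posedness of the boundary trace of $\psi\in H^1(\Omega)$.
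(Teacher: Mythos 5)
Your proof is correct, but it takes a genuinely different route from the paper's. Both start identically (Jensen's inequality on the $z$-average in the definition \eqref{eq:phiepsdef}, then the change of variable $w=vz$ leading to a kernel $\widetilde F = \bF/\nu_0$). After that the paper's proof is quantitative: it splits the velocity domain into $|\eps v|<1$ and $|\eps v|>1$, Taylor-expands $\wpsi(x+\eps v,\eps v)-\psi(x)$ using the exit time $\tau_0^\eps$ on the near region, and uses the decay of $\bF$ and \eqref{eq:L2bdt} on the far region, obtaining a rate $\sim\eps^{2s}$. You instead split the spatial domain $\Omega=\Omega_\eps^+\cup\Omega_\eps^-$ for each fixed $w$, use $L^2$-translation continuity on $\Omega_\eps^+$ and the explicit trace identity of Lemma \ref{lem:L2bd} on the thin strip $\Omega_\eps^-$, and then conclude by dominated convergence in $w$. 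Your version is softer and more conceptual — it isolates exactly what the boundary strip contributes and why it vanishes — at the price of not giving a convergence rate; the paper's version is sharper quantitatively but arguably hides why the discontinuity of $\wpsi$ across $\pa\Omega$ is harmless (it is absorbed into the exit-time factor $\tau_0^\eps\leq 1$).

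One small gap worth closing: when you assert that the dominating function $C(1+|w_N|)\widetilde F(w)$ is integrable, you only discuss the tail $\widetilde F(w)\lesssim |w|^{-(N+2s)}$ at infinity; you must also check integrability near $w=0$, where $\widetilde F$ is singular. Near the origin one finds $\widetilde F(w)\lesssim |w|^{1-N}$ (splitting the $z$-integral at $z=|w|$ and using the bound $F\in L^\infty$ together with $F(v)\lesssim|v|^{-(N+2s)}$), which is indeed locally integrable since $1-N>-N$; your dominated-convergence step therefore goes through, but the verification should be made explicit. Also, the final dominated convergence in $t$ requires a little care under the stated hypothesis $\psi\in L^\infty(0,\infty;H^1(\Omega))$, since this gives boundedness but not integrability of $t\mapsto\|\psi(t)\|_{H^1}^2$; your parenthetical appeal to the compact support of the test functions is the right fix, and the paper itself glosses over this point as well (its proof invokes \eqref{eq:L2bdt}, which requires $\psi\in H^s(\RR_+\times\RR_+^N)$ rather than $L^\infty_t H^1_x$).
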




\begin{proof}[Proof of Lemma \ref{lem:L20}]
To prove the lemma,  we first write
\begin{align*}
|\phi^\eps(t,x,v) -  \psi(t,x) |^2 & = \left| \int_0^\infty e^{-\nu_0 z} \nu_0 \big(\wpsi(t,x+\eps v z,z) -\psi(t,x)\big) \d z\right|^2 \\
& \leq  \int_0^\infty e^{-\nu_0 z}\nu_0 \big|\wpsi(t,x+\eps v z,v) -\psi(t,x)\big|^2\d z
\end{align*}
and so
\begin{align*}
& \underset{\RR_+\times\RR_+^N\times\RR^N}{\iiint}     \big| \phi^\eps(t,x,v) -  \psi(t,x) \big|^2 F(v)   \d t \d x \d v \\
& \qquad \leq  
\underset{\RR_+\times\RR_+^N\times\RR^N}{\iiint} 
\int_0^\infty e^{-\nu_0 z}\nu_0 |\wpsi(t,x+\eps v z,v) -\psi(t,x)|^2\,
F(v)   \d z \d w   \d x \d t \\
& \qquad \leq  
\underset{\RR_+\times\RR_+^N\times\RR^N}{\iiint} 
\int_0^\infty e^{-\nu_0 z}\nu_0 |\wpsi(t,x+\eps w,w) -\psi(t,x)|^2\, 
F(w/z) z^{-N}  \d z \d w   \d x \d t \\
& \qquad \leq  
\underset{\RR_+\times\RR_+^N\times\RR^N}{\iiint} 
 |\wpsi(t,x+\eps w,w) -\psi(t,x)|^2\, \bF (w)
 \d w   \d x \d t 
\end{align*}
where $\bF$ is given by \eqref{eq:bfdef}. 

We now write
\begin{align*}
& \underset{\RR_+\times\RR_+^N\times\RR^N}{\iiint}     \big| \wpsi(t,x+\eps v, \eps v) -  \psi(t,x) \big|^2 \bF(v)   \d t \d x \d v \\
& \qquad = 
 \int_{|\eps v|< 1} \underset{\RR_+\times\RR_+^N }{\iint}     \big| \wpsi(t,x+\eps v, \eps v) -  \psi(t,x) \big|^2 \bF(v)   \d t \d x \d v  \\
& \qquad\quad + 
\int_{|\eps v |>1}  \underset{\RR_+\times\RR_+^N }{\iint}     \big| \wpsi(t,x+\eps v, \eps v) -  \psi(t,x) \big|^2 \bF(v)   \d t \d x \d v 
\end{align*}
To  bound the integral over $|\eps v |<1$ we take advantage of the regularity of $ \psi$ to write, using Taylor and $\tau_0^\eps$ defined in \eqref{def:tau0eps}:
\begin{align*}
\int_{|\eps v| < 1} &\underset{\RR_+\times\RR_+^N}{\iint} \big( \wpsi(t,x+\eps v,\eps v) - \psi(t,x) \big)^2 \bF(v) \d t \d x \d v \\
&= \int_{|\eps v|< 1} \underset{\RR_+\times\RR_+^N}{\iint} \bigg( \int_0^{\tau_0^\eps(x,v)} \eps v \cdot \na_x  \psi(t,x+\tau \eps v)\d\tau \bigg)^2 \bF(v) \d t \d x \d v \\
& \leq   \int_0^1 \int_{|\eps v|< 1} |\eps v|^2  \bF(v)  \underset{\RR_+\times\RR_+^N}{\iint} |\na_x  \psi(t,x+\tau \eps v)|^2 1_{\{\tau\leq \tau_0^\eps(x,v)\}} \d t\d x \d v\d \tau\\
& \leq   \int_0^1 \int_{|\eps v|< 1} |\eps v|^2  \bF(v) C(\psi)  \d v\d \tau\\
& \leq   C(\psi) \eps^{2s}
\end{align*}
And for the integral over $|\eps v |>1$ we use the decay of $F$ to write (using \eqref{eq:L2bdt}):
\begin{align*}
\int_{|\eps v|> 1} &\underset{\RR_+\times\RR_+^N}{\iint} \big( \wpsi(t,x+\eps v,\eps v) - \psi(t,x) \big)^2 \bF(v) \d t \d x \d v \\
&\leq \int_{|\eps v|>1}C(\psi) (1+\eps |v|) \bF(v) \d v \\
&\leq C(\psi)\eps^{2s} \int_{|w|>1}(1+|w|)\frac{C}{|w|^{N+2s}}\d w  .
\end{align*}
This completes the proof of Lemma \ref{lem:L20}.
\end{proof}
\medskip

\section{Well posedness of the asymptotic equation} \label{sec:Lwellposed}

\subsection{A functional framework for $D^{2s-1}$}\label{app:D}

Using the expression of $D^{2s-1}$ in terms of the extension $\wpsi$, we can improve on Proposition \ref{prop:DbdP} and prove the following result, which will be useful in the Proof of Theorem \ref{thm:evolution}:
\begin{proposition}\label{prop:D}
If $\psi \in H^{2s-1+\beta}(\Omega)$ for some $\beta>0$, then $D^{2s-1}[\psi] \in (L^2(\Omega))^N$.
\end{proposition}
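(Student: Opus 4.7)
The plan is to use the representation \eqref{eq:D00} of $D^{2s-1}[\psi]$ and split the domain of integration according to whether $y\in\Omega$ or $y\notin\Omega$, giving $D^{2s-1}[\psi]=I_1+I_2$ with
$$I_1(x) := C\int_\Omega [\psi(y)-\psi(x)]\,\frac{y-x}{|y-x|^{N+2s}}\,dy,\qquad I_2(x) := C\int_{\RR^N\setminus\Omega}[\wpsi(y,y-x)-\psi(x)]\,\frac{y-x}{|y-x|^{N+2s}}\,dy,$$
and to estimate each piece separately in $L^2(\Omega)$. For $I_1$, bound $|I_1(x)|$ by the regional Riesz-type integral $C\int_\Omega |\psi(y)-\psi(x)|\,|y-x|^{-(N+2s-1)}\,dy$ and split at $|y-x|=1$. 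The far part is a convolution of $|\psi|$ with an $L^1$ kernel (integrable since $2s-1>0$), controlled in $L^2$ by $\|\psi\|_{L^2(\Omega)}$. The near part is treated by Cauchy--Schwarz using the splitting $|y-x|^{-(N+2s-1)}=|y-x|^{-(N/2+2s-1+\beta)}\cdot|y-x|^{-(N/2-\beta)}$, which gives $\|I_1^{\mathrm{near}}\|_{L^2(\Omega)}^2\le C_\beta\,[\psi]^2_{H^{2s-1+\beta}(\Omega)}$ (finiteness of $\int_{|z|\le 1}|z|^{-(N-2\beta)}\,dz$ is where $\beta>0$ is used).

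For $I_2$, I would parametrize the exterior points $y\notin\Omega$ by $(u,\lambda)\in\partial\Omega\times(0,\infty)$ via $y=u+\lambda(u-x)$, where $u=p(x,y)$ is the crossing of the segment $[x,y]$ with $\partial\Omega$. A direct computation gives $|y-x|=(1+\lambda)|u-x|$ and Jacobian $(1+\lambda)^{N-1}x_N$; since by construction $\wpsi(y,y-x)=\psi(u)$, integrating out $\lambda$ using $\int_0^\infty(1+\lambda)^{-2s}\,d\lambda=1/(2s-1)$ yields
$$I_2(x) = \frac{C\,x_N}{2s-1}\int_{\partial\Omega}[\psi(u)-\psi(x)]\,\frac{u-x}{|u-x|^{N+2s}}\,d\sigma(u).$$
To bound this in $L^2(\Omega)$, introduce the even reflection $\bar\psi(x',x_N):=\psi(x',|x_N|)$; for $\beta$ taken small enough that $2s-1+\beta<3/2$ it lies in $H^{2s-1+\beta}(\RR^N)$ with norm bounded by $\|\psi\|_{H^{2s-1+\beta}(\Omega)}$, and one can write $\psi(u)-\psi(x)=\bar\psi(u)-\bar\psi(x)$. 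A Cauchy--Schwarz on the boundary integral, combined with the decaying prefactor $x_N$ that compensates the lower dimension of $\partial\Omega$, would then reduce the estimate to a double integral of fractional differences of $\bar\psi$ bounded by its $H^{2s-1+\beta}(\RR^N)$ seminorm.

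The main obstacle will be the estimate for $I_2$: the boundary formula involves the trace $\psi|_{\partial\Omega}$, which is not a priori well-defined when $2s-1+\beta\le 1/2$ (i.e.\ for $\beta<3/2-2s$ with $s<3/4$). I would circumvent this by first establishing the $L^2$ bound for smooth $\psi\in C^\infty(\overline\Omega)$, where all traces and manipulations are legitimate, and then extending the operator $D^{2s-1}\colon H^{2s-1+\beta}(\Omega)\to L^2(\Omega)$ by density. The compensating factor $x_N$ in the formula for $I_2$ is essential: without it, one would only control a weighted $L^2$ norm (with weight $x_N^{-\alpha}$ for some $\alpha>0$) rather than the desired $L^2(\Omega)$ bound.
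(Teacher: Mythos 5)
Your first two steps coincide with the paper's: you split the integral in \eqref{eq:D00} into $y\in\Omega$ and $y\notin\Omega$, and your change of variables $y=u+\lambda(u-x)$ over the exterior region is exactly the paper's substitution $z'=y'-\frac{y_N}{y_N-x_N}(y'-x')$, yielding the same identity \eqref{eq:Dlast}
$$
D^{2s-1}[\psi](x) = C_1\int_\Omega[\psi(y)-\psi(x)]\frac{y-x}{|y-x|^{N+2s}}\,dy + C_2\,x_N\int_{\pa\Omega}[\psi(u)-\psi(x)]\frac{u-x}{|u-x|^{N+2s}}\,d\sigma(u).
$$
Your treatment of $I_1$ (split at $|y-x|=1$, Young for the far part, Cauchy--Schwarz with balanced exponents for the near part) is a fleshed-out version of what the paper calls ``obvious.'' So far so good.

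Where you part ways with the paper, the argument has a real gap. The paper does not estimate the boundary term in one go: it inserts the intermediate value $\psi(x',0)$, writing $\psi(u',0)-\psi(x',x_N) = [\psi(u',0)-\psi(x',0)] + [\psi(x',0)-\psi(x',x_N)]$, then observes that for the second (``vertical'') difference the $u'$-integration is explicit and produces precisely the weight $x_N^{1-2s}$ that Hardy's inequality (Theorem~\ref{thm:hardy}, applied to the function $x\mapsto\psi(x',x_N)-\psi(x',0)$, which vanishes on $\pa\Omega$) is designed to absorb, while the first (``horizontal'') difference is a genuine boundary trace term estimated by a weighted Cauchy--Schwarz and a fractional trace seminorm. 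Both structural ingredients are essential: Hardy handles the singular $x_N$-weight coming from the $u'$-integration, and the trace seminorm handles the $(N-1)$-dimensional boundary. Your proposal skips this decomposition. The sentence ``a Cauchy--Schwarz on the boundary integral, combined with the decaying prefactor $x_N$ that compensates the lower dimension of $\pa\Omega$, would then reduce the estimate to a double integral of fractional differences of $\bar\psi$'' does not specify the mechanism: the object on the left is a $x$-parametrized integral over the $(N-1)$-dimensional set $\pa\Omega$, while the object on the right, $[\bar\psi]^2_{H^{2s-1+\beta}(\RR^N)}$, is a double integral over $\RR^N\times\RR^N$, and Cauchy--Schwarz alone does not bridge the dimension gap without some version of the paper's trace/Hardy splitting. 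Even reflection tells you $\bar\psi\in H^{2s-1+\beta}(\RR^N)$ and $\psi(u',0)=\bar\psi(u)$, but it does not by itself convert the boundary integral into a bulk one.

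The final paragraph of your proposal, invoking a density argument to ``circumvent'' the trace issue for $2s-1+\beta\le 1/2$, also does not close the gap. Density lets you extend a \emph{bounded} operator once you have the a priori bound $\|D^{2s-1}[\psi]\|_{L^2(\Omega)}\le C\|\psi\|_{H^{2s-1+\beta}(\Omega)}$ for smooth $\psi$; but obtaining that bound with a constant depending only on the $H^{2s-1+\beta}$ norm is precisely the point of the proposition, and your sketch for $I_2$ does not yet produce it. In short: the algebra (decomposition, change of variables, formula for $I_2$) is right and matches the paper, but the $L^2$ estimate of the boundary term is the heart of the result and your proposal leaves it unresolved; the paper's insertion of $\psi(x',0)$ together with the Hardy inequality is the missing ingredient.
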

In particular, since $s>2s-1$ when $s<1$, we deduce that if $\psi\in H^s(\Omega)$, then $D^{2s-1}[\psi] \in L^2(\Omega)$.

\begin{proof}
We recall the definition \eqref{eq:D00} of $D^{2s-1}$:
\begin{align*}
 D^{2s-1} [\psi] (x) & = \gamma \nu_0^{1-2s}\Gamma(2s)  \int_{\RR^N}  \big[\wpsi(y,y-x)-\psi(x)\big] \frac{y-x}{|y-x|^{N+2s}} \,dy \\
 & =   \gamma \nu_0^{1-2s}\Gamma(2s)  \int_{\Omega}  \big[\psi(y)-\psi(x)\big] \frac{y-x}{|y-x|^{N+2s}} \,dy\\
 & \qquad + \gamma \nu_0^{1-2s}\Gamma(2s)  \int_{\RR^N\setminus \Omega}  \big[\wpsi(y,y-x)-\psi(x)\big] \frac{y-x}{|y-x|^{N+2s}} \,dy
 \end{align*}
Recalling that $\Omega$ is the upper-half space $y_N>0$, we can use \eqref{eq:fhhr} to write 
$$\wpsi(y,y-x)=\psi(y'-\frac{y_N}{y_N-x_N}(y'-x'),0) \qquad \forall x\in\Omega, \; y\in\RR^N\setminus\Omega.$$
We now do a change of variable $z'=y'-\frac{y_N}{y_N-x_N}(y'-x')$. 
Denoting $z=(z',0)$, we check that 
$$ y-x = \frac{x_N-y_N}{x_N} (z-x)$$ and so
\begin{align*}
\int_{\RR^N\setminus \Omega}  \big[\wpsi(y,y-x)-\psi(x)\big] \frac{y-x}{|y-x|^{N+2s}} \,dy
& =\int_{\RR^{N-1}} \int_{-\infty}^0 
 \big[\psi(z',0)-\psi(x)\big]  \frac{z-x}{|z-x|^{N+2s}}  \left( \frac{x_N-y_N}{x_N}\right)^{-2s}\,dy_N \,dz' \\
 & = \frac{x_N}{2s-1}\int_{\pa\Omega} \big[\psi(z)-\psi(x)\big]  \frac{z-x}{|z-x|^{N+2s}} dS(z)
 \end{align*}
We deduce
\begin{align} 
 D^{2s-1} [\psi] (x)  
 & =   \gamma \nu_0^{1-2s}\Gamma(2s)  \int_{\Omega}  \big[\psi(y)-\psi(x)\big] \frac{y-x}{|y-x|^{N+2s}} \,dy\nonumber \\
 & \qquad + \gamma \nu_0^{1-2s}\Gamma(2s-1) x_N  \int_{\pa\Omega} \big[\psi(z)-\psi(x)\big]  \frac{z-x}{|z-x|^{N+2s}} dS(z) .\label{eq:Dlast}
 \end{align}

Note that the last term can also be split as
\begin{align*}
x_N  \int_{\pa\Omega} \big[\psi(z)-\psi(x)\big]  \frac{z-x}{|z-x|^{N+2s}} dS(z) 
 & =x_N\int_{\pa\Omega} \big[\psi(z',0)-\psi(x',0)\big]  \frac{z-x}{|z-x|^{N+2s}} dS(z) \\
 & \qquad +x_N\int_{\pa\Omega} \big[\psi(x',0)-\psi(x',x_N)\big]  \frac{z-x}{|z-x|^{N+2s}} dS(z) .
\end{align*}
where
$$x_N\int_{\RR^{N-1}} \frac{z_i-x_i}{|z-x|^{N+2s}} \, dz'= 
\begin{cases}
0 & \mbox{ if } i=1,\dots,N-1 \\
-x_N^{1-2s} \int_{\RR^{N-1}}\frac{1}{(z'^2+1)^{\frac{N+2s}{2}}}\, dz' & \mbox{ if } i=N
\end{cases}.
$$
We thus need to show that the three terms
\begin{align*}
I_1 & = \int_{\Omega}  \frac{|\psi(y)-\psi(x)|}{|y-x|^{N+2s-1}} \,dy\\
I_2& = x_N\int_{\pa\Omega} \frac{|\psi(z',0)-\psi(x',0)|} {|z-x|^{N+2s-1}} dS(z) \\
I_3 & = x_N^{1-2s} |\psi(x',0)-\psi(x',x_N)|
\end{align*}
are bounded in $L^2(\Omega)$ by $\|\psi\|_{H^{2s-1+\beta}(\Omega)}$.
The first term is obvious and the last follows from Hardy's inequality.
For the second term, we write
\begin{align*}
& \int_\Omega |I_2(x)|^2\, dx \\
 & \leq 
\int_\Omega x_N^2\left(\int_{\RR^{N-1}} \frac{|\psi(z',0)-\psi(x',0)|} {|(z'-x')^2+x_N^2|^{\frac{N+2s-1}{2}}} dz' \right)^2\, dx\\
& \leq 
\int_{\Omega\cap\{x_N<1\}} x_N^2 \int_{\RR^{N-1}} \frac{|\psi(z',0)-\psi(x',0)|^2} {|z'-x'|^{N+4s-4+\beta'}} dz'  \, \int_{\RR^{N-1}} \frac{|z'-x'|^{N+4s-4+\beta'}} {|(z'-x')^2+x_N^2|^{N+2s-1}} dz' dx\\
& \quad + \int_{\Omega\cap\{x_N>1\}} x_N^2 \int_{\RR^{N-1}} \frac{|\psi(z',0)-\psi(x',0)|^2} {|(z'-x')^2+x_N^2|^{\frac{N-1+2\beta}{2}}} dz'\int_{\RR^{N-1}} \frac{1} {|(z'-x')^2+x_N^2|^{\frac{N+4s-1-2\beta}{2}}} dz' \, dx\\
& \leq 
\int_{\Omega\cap\{x_N<1\}} x_N^{-1+\beta'} \int_{\RR^{N-1}} \frac{|\psi(z',0)-\psi(x',0)|^2} {|z'-x'|^{N+4s-4+\beta'}} dz'  \, dx\int_{\RR^{N-1}} \frac{|z'|^{N+4s-4}} {|z'^2+1|^{N+2s-1}} dz'  \\
& \quad + C\int_1^\infty \int_{\RR^{N-1}} x_N^{2-4s+2\beta} \int_{\RR^{N-1}} \frac{|\psi(z',0)-\psi(x',0)|} {|(z'-x')^2+x_N^2|^{\frac{N-1+2\beta}{2}}} dz' \, dx'\, dx_N \int_{\RR^{N-1}} \frac{1} {|z'^2+1^2|^{\frac{N+4s-1-2\beta}{2}}} dz'  \\
& \leq C \int_{\RR^{N-1}}\int_{\RR^{N-1}} \frac{|\psi(z',0)-\psi(x',0)|^2} {|z'-x'|^{N+4s-4+\beta'}} dz'\, dx'\\ 
& \quad + C \int_1^\infty \int_{\RR^{N-1}} x_N^{2-4s+2\beta} \int_{\RR^{N-1}} \frac{|\psi(z',0)-\psi(x',0)|^2} {|(z'-x')^2+x_N^2|^{\frac{N-1+2\beta}{2}}} dz' \, dx'\, dx_N 
\end{align*}
where
\begin{align*}
& \int_1^\infty \int_{\RR^{N-1}} x_N^{2-4s+2\beta} \int_{\RR^{N-1}} \frac{|\psi(z',0)-\psi(x',0)|^2} {|(z'-x')^2+x_N^2|^{\frac{N-1+2\beta}{2}}} dz' \, dx'\, dx_N \\
& \qquad  \leq 2 \int_1^\infty  x_N^{2-4s+2\beta}\int_{\RR^{N-1}} \int_{\RR^{N-1}} \frac{|\psi(z',0)|^2} {|(z'-x')^2+x_N^2|^{\frac{N-1+2\beta}{2}}} dz' \, dx'\, dx_N.
\end{align*}
The result follows.

\end{proof}

\subsection{Proof of Theorem \ref{thm:evolution}}

The proof of Theorem \ref{thm:evolution} relies on Hille-Yoshida theorem. The first step, which will occupy most of this section is thus devoted to the proof of  the  well-posedness of the stationary problem:
\begin{equation}\label{eq:Neumanns}
\left\{
\begin{array}{ll}
\vphi (x)- \mathcal L [\vphi](x)= g(x) & \mbox{ for all  } x\in \Omega, \\
D^{2s-1} [\vphi](x) \cdot n(x) =0 & \mbox{ for all } x\in \pa\Omega.
\end{array}
\right.
\end{equation}
Using \eqref{eq:ipp1}, we see that classical solutions of \eqref{eq:Neumanns} satisfy
\begin{equation}\label{eq:weak00}
\int_{\Omega} \vphi(x) \psi(x)\d x +    \int_{\Omega} D^{2s-1}[\vphi](x)\cdot \na \psi(x) \d x = \int_\Omega g(x)\psi(x)\d x 
\end{equation}
for all $\psi\in\mathcal D(\overline \Omega)$, and using \eqref{eq:ipp3}, we can write this as
\begin{equation}\label{eq:weak00'}
\int_{\Omega} \vphi(x) \psi(x)\d x +   \gamma \nu_0^{1-2s} \Gamma(2s-1)
 \int_{\Omega}   \int_\Omega (y-x)\cdot \na \vphi(y) (y-x) \cdot \na \psi(x)\frac{dy\, dx}{|y-x|^{N+2s}}   = \int_\Omega g(x)\psi(x)\d x 
\end{equation}

We thus introduce the following bilinear symmetric form:
\begin{equation}\label{eq:a} 
a(\vphi,\psi) = \int_{\Omega} \vphi(x) \psi(x)\d x +  \gamma \nu_0^{1-2s} \Gamma(2s-1)
 \int_{\Omega}   \int_\Omega (y-x)\cdot \na \vphi(y) (y-x) \cdot \na \psi(x)\frac{dy\, dx}{|y-x|^{N+2s}} .
\end{equation}
This form is bilinear and symmetric. It is clearly well defined for instance if $\vphi$ and $\psi$ are in $H^1(\Omega)$, but we are going to show that is can be extended to the space $ H^s(\Omega)$.

Indeed, we will show  the following proposition:
\begin{proposition}\label{prop:norm}
The bilinear form $a(\vphi,\psi)$ defined by \eqref{eq:a} satisfies
\begin{equation}\label{eq:aa}
\begin{cases}
a(\vphi,\psi)\leq C \| \vphi\|_{H^s}  \| \psi\|_{H^s} & \mbox{ for all } \vphi,\, \psi\,  \in H^1(\Omega) \\
a(\vphi,\vphi)\geq c \| \vphi\|^2_{H^s} &  \mbox{ for all } \vphi \in H^1(\Omega) 
\end{cases}
 \end{equation}
for some constants $c$ and $C$ depending only on $\Omega$ and $s$
and can thus be extended into a bilinear continuous form on $H^s(\Omega) \times H^s(\Omega)$.
\end{proposition}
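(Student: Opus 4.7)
The form $a$ splits as $a(\vphi,\psi)=(\vphi,\psi)_{L^2(\Omega)}+c_\star B(\vphi,\psi)$ with $c_\star=\gamma\nu_0^{1-2s}\Gamma(2s-1)>0$ and $B$ the double-integral term. The $L^2$ pairing trivially satisfies $(\vphi,\psi)_{L^2}\leq\|\vphi\|_{L^2}\|\psi\|_{L^2}\leq\|\vphi\|_{H^s}\|\psi\|_{H^s}$ and contributes $\|\vphi\|_{L^2}^2$ to the coercivity, so the real task is to control $B$ in terms of the Gagliardo seminorm. By a density argument it suffices to prove both inequalities for $\vphi,\psi\in C^\infty(\bO)\cap H^1(\Omega)$ of sufficient decay at infinity and then extend by continuity.

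The core of the argument is to derive a representation in which the standard Gagliardo form appears as the principal contribution. Starting from $c_\star B(\vphi,\psi)=\int_\Omega D^{2s-1}[\vphi]\cdot\nabla\psi\,dx$ (formula \eqref{eq:ipp3}) and using $\L=\div D^{2s-1}$ (Corollary \ref{cor:Ld}), one integration by parts gives
\[c_\star B(\vphi,\psi)=-\int_\Omega\L[\vphi]\,\psi\,dx+\int_{\dO}\psi\,D^{2s-1}[\vphi]\cdot n\,dS(x).\]
Substituting the representation \eqref{eq:opL3} of $\L[\vphi]$ as a regional fractional Laplacian plus a boundary correction, and applying the classical symmetrization identity
\[-\int_\Omega\!\int_\Omega\frac{\vphi(y)-\vphi(x)}{|y-x|^{N+2s}}\psi(x)\,dy\,dx=\frac12\int_\Omega\!\int_\Omega\frac{(\vphi(y)-\vphi(x))(\psi(y)-\psi(x))}{|y-x|^{N+2s}}\,dy\,dx,\]
I arrive at
\[c_\star B(\vphi,\psi)=\frac{\kappa}{2}\int_\Omega\!\int_\Omega\frac{(\vphi(y)-\vphi(x))(\psi(y)-\psi(x))}{|y-x|^{N+2s}}\,dy\,dx+R(\vphi,\psi),\]
with $\kappa=\gamma\nu_0^{1-2s}\Gamma(2s+1)>0$. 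Since $\Omega$ is the upper half-space, $n(y)=-e_N$ and $(y-x)\cdot n(y)=x_N$ for $y\in\dO$, and $R$ consists of two contributions: the boundary piece of \eqref{eq:opL3} yields an integral of the form $\int_\Omega x_N\int_{\dO}(\vphi(z)-\vphi(x))|z-x|^{-N-2s}\psi(x)\,dS(z)\,dx$, while the integration by parts produces the trace term $\int_{\dO}\psi\,D^{2s-1}[\vphi]\cdot n\,dS$.

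The Gagliardo main term gives the upper bound $\leq\kappa[\vphi]_{H^s}[\psi]_{H^s}$ by Cauchy--Schwarz and the coercive lower bound $\geq\kappa[\vphi]_{H^s}^2$ at $\psi=\vphi$, so both halves of \eqref{eq:aa} reduce to controlling $R$. For the first contribution to $R$, the inner integral $\int_{\dO}|z-x|^{-N-2s}dS(z)$ can be computed explicitly in the half-space and produces an effective kernel comparable to $x_N^{-2s}$ acting on the trace of $\vphi$; Hardy's inequality (Theorem \ref{thm:hardy}) together with the trace embedding $H^s(\Omega)\hookrightarrow H^{s-1/2}(\dO)$, valid since $s>1/2$, yield the desired bound. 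The main obstacle is the second contribution $\int_{\dO}\psi\,D^{2s-1}[\vphi]\cdot n\,dS$: the normal trace of $D^{2s-1}[\vphi]$ is not a priori defined for $\vphi\in H^s$. The representation \eqref{eq:Dlast} from Proposition~\ref{prop:D} is crucial here, because at $x=(x',0)\in\dO$ its second term vanishes and $D^{2s-1}[\vphi](x',0)\cdot n$ becomes an explicit nonlocal operator of order $2s-1$ applied to $\vphi|_{\dO}$; a direct estimate then shows this operator maps $H^{s-1/2}(\dO)$ continuously into its dual $H^{-(s-1/2)}(\dO)$, so that the pairing against $\psi|_{\dO}\in H^{s-1/2}(\dO)$ is bounded by $C\|\vphi\|_{H^s}\|\psi\|_{H^s}$. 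The upper bound of \eqref{eq:aa} then follows directly, and the coercive lower bound is obtained via a Young inequality that absorbs the lower-order content of $R(\vphi,\vphi)$ into the Gagliardo main term and the $L^2$ norm, completing the extension of \eqref{eq:aa} to $H^s(\Omega)\times H^s(\Omega)$.
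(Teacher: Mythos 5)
Your strategy --- split $a$ into an $L^2$ part plus a Gagliardo bilinear form plus a remainder $R$ --- is broadly in the spirit of what the paper does, but the proposal has two genuine gaps, one of which invalidates the coercivity argument.

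First, the claim that $D^{2s-1}[\vphi]\cdot n$ at a boundary point $x=(x',0)$ becomes ``an explicit nonlocal operator of order $2s-1$ applied to $\vphi|_{\pa\Omega}$'' is incorrect. In the representation \eqref{eq:Dlast}, the term that \emph{vanishes} at $x_N=0$ is precisely the one built from $\vphi|_{\pa\Omega}$; what survives is
$\gamma\nu_0^{1-2s}\Gamma(2s)\int_\Omega[\vphi(y)-\vphi(x',0)]\,\tfrac{y-(x',0)}{|y-(x',0)|^{N+2s}}\,dy$,
which involves the values of $\vphi$ over all of $\Omega$, not just its boundary trace. The duality $H^{s-1/2}(\pa\Omega)\to H^{-(s-1/2)}(\pa\Omega)$ you invoke therefore does not apply to this term as stated.

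Second, and more seriously, the coercive lower bound cannot be obtained by a Young-type absorption of $R(\vphi,\vphi)$: the boundary contributions in $R$ scale exactly like $\|\vphi\|^2_{H^s}$ (this is what Hardy's inequality and the $H^{s-1/2}(\pa\Omega)$ trace theorem demonstrate for the upper bound), so they are not lower order and cannot be hidden behind a small parameter. The proof requires a structural observation which you miss. If you combine the two boundary pieces of $R$ --- the boundary integral coming from \eqref{eq:opL3} and the trace term $\int_{\pa\Omega}\psi\,D^{2s-1}[\vphi]\cdot n\,dS$, using the \emph{correct} expression for the latter from \eqref{eq:Dlast} --- and then set $\psi=\vphi$, the cross terms cancel and one finds
\begin{align*}
R(\vphi,\vphi)=\gamma\nu_0^{1-2s}\Gamma(2s)\int_\Omega x_N\int_{\pa\Omega}\frac{[\vphi(z)-\vphi(x)]^2}{|z-x|^{N+2s}}\,dS(z)\,dx\ \geq\ 0 .
\end{align*}
The remainder is manifestly nonnegative, so the lower bound is free; no absorption is needed. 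This sign-definite identity is exactly the content of the paper's Lemma~\ref{lem:a}, which the paper establishes directly from the $\eps$-approximations $D^{2s-1}_\eps$ rather than via $\L$, the integration-by-parts formula, and \eqref{eq:opL3}; that route avoids ever having to make sense of the normal trace of $D^{2s-1}[\vphi]$ for general $\vphi\in H^1(\Omega)$. Once \eqref{eq:a2} is in hand, the upper bound follows from the kernel computations you sketch (Hardy, Theorem~\ref{thm:hardy}, plus the trace embedding $H^s(\Omega)\hookrightarrow H^{s-1/2}(\pa\Omega)$), and coercivity is immediate by positivity.
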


Lax-Milgram's Theorem then implies the existence of a weak solution to \eqref{eq:Neumanns}. More precisely, we have:
\begin{theorem} \label{thm:weak}
For all $g$ in $L^2(\Omega)$, there exists a unique $\vphi\in H^s(\Omega)$ such that
$$ a(\vphi,\psi) = \int_\Omega  g(x)\psi(x)\d x \qquad \forall \psi\in H^s(\Omega).$$
\end{theorem}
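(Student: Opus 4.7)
The plan is to obtain Theorem \ref{thm:weak} as an immediate consequence of the Lax--Milgram theorem applied on the Hilbert space $H^s(\Omega)$, using Proposition \ref{prop:norm} as the essential input. All the analytic work is concentrated in that proposition, so the proof of the theorem itself will be short once it is available.

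First I would check that the right-hand side defines a continuous linear functional on $H^s(\Omega)$. Since $s>1/2$, the embedding $H^s(\Omega)\hookrightarrow L^2(\Omega)$ is continuous (in fact the inclusion is trivially continuous here since $\|\psi\|_{L^2(\Omega)} \leq \|\psi\|_{H^s(\Omega)}$ by definition of the $H^s$ norm used in Section \ref{sec:DL}), so for $g\in L^2(\Omega)$ Cauchy--Schwarz gives
\begin{equation*}
\left|\int_\Omega g(x)\psi(x)\,dx\right| \leq \|g\|_{L^2(\Omega)}\,\|\psi\|_{L^2(\Omega)} \leq \|g\|_{L^2(\Omega)}\,\|\psi\|_{H^s(\Omega)},
\end{equation*}
so $L(\psi):=\int_\Omega g\psi\,dx$ belongs to $(H^s(\Omega))^*$.

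Next I would invoke Proposition \ref{prop:norm}: the bilinear form $a(\cdot,\cdot)$, initially defined by \eqref{eq:a} on the dense subspace $H^1(\Omega)\subset H^s(\Omega)$, satisfies the two-sided bound \eqref{eq:aa} there, hence extends by density to a continuous symmetric bilinear form on $H^s(\Omega)\times H^s(\Omega)$ with the same continuity constant $C$ and coercivity constant $c>0$. The extended form is therefore continuous and coercive on the Hilbert space $H^s(\Omega)$. Lax--Milgram then produces a unique $\vphi\in H^s(\Omega)$ such that $a(\vphi,\psi)=L(\psi)$ for every $\psi\in H^s(\Omega)$, which is the conclusion of Theorem \ref{thm:weak}.

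The only subtlety at this stage is to justify that the extended form coincides with \eqref{eq:weak00'} on test functions: for $\vphi,\psi\in H^1(\Omega)$ this is just the definition \eqref{eq:a}, and for general $\vphi\in H^s(\Omega)$ one obtains $a(\vphi,\psi)$ as the limit $a(\vphi_n,\psi)$ along any sequence $\vphi_n\in H^1(\Omega)$ approximating $\vphi$ in $H^s$, using continuity of $a$; the same remark applies to $\psi$. Thus no obstacle arises in the proof of Theorem \ref{thm:weak} itself --- the entire difficulty is pushed into Proposition \ref{prop:norm}, where the non-trivial step is the coercivity estimate $a(\vphi,\vphi)\geq c\|\vphi\|_{H^s}^2$: one must show that the symmetric nonlocal quadratic form $\iint_\Omega (y-x)\cdot\nabla\vphi(y)\,(y-x)\cdot\nabla\vphi(x)\,|y-x|^{-N-2s}\,dy\,dx$ controls the Gagliardo $H^s$-seminorm of $\vphi$, which is the genuinely analytic part of the argument (likely via a Fourier/Plancherel computation identifying $D^{2s-1}$ as a fractional gradient of order $2s-1$).
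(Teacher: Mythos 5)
Your proposal is correct and follows exactly the route the paper takes: Theorem \ref{thm:weak} is stated in the paper as an immediate consequence of Proposition \ref{prop:norm} via Lax--Milgram, with no further argument given. The details you supply (continuity of the right-hand side functional, extension of $a$ by density from $H^1(\Omega)$ to $H^s(\Omega)$, and the application of Lax--Milgram on $H^s(\Omega)$) are precisely the standard steps the paper leaves implicit.
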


We now turn to the proof of Proposition \ref{prop:norm}, which relies on the following lemma, the proof of which is postponed until after the proof of Proposition \ref{prop:norm}:
\begin{lemma}\label{lem:a}
For all $\vphi\in H^1(\Omega)$, there holds: 
\begin{align}
 \int_{\Omega} D^{2s-1}[\vphi]\cdot \na \vphi \d x 
& =s \gamma \Gamma(2s) \int_\Omega \int_\Omega \frac{[\vphi(x)-\vphi(y)]^2}{|x-y|^{N+2s}}\d x\d y \nonumber \\
& \quad +\gamma \Gamma(2s)   \int_\Omega \int_{\pa\Omega}
[\vphi(x)-\vphi(y)]^2 \frac{y-x}{|y-x|^{N+2s}}\cdot n(y)\d S(y)\d x.
\label{eq:a2}
\end{align}
\end{lemma}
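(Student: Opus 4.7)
The plan is to combine the integration-by-parts identity of Proposition \ref{prop:ipp} with the explicit boundary-aware representations of $\L$ and $D^{2s-1}$ available on the half-space. After an approximation argument (smoothing $\vphi$ so that Proposition \ref{prop:ipp} applies, then passing to the limit using the $H^s$ bounds already established on the quadratic form $a$), Proposition \ref{prop:ipp} together with Corollary \ref{cor:Ld} gives
\begin{equation}\label{eq:aplan1}
\int_\Omega D^{2s-1}[\vphi]\cdot\na\vphi\,\d x =-\int_\Omega \vphi\,\L[\vphi]\,\d x+\int_{\pa\Omega}\vphi\,D^{2s-1}[\vphi]\cdot n\,\d S(x).
\end{equation}

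For the volume term on the right of \eqref{eq:aplan1}, I would substitute the representation \eqref{eq:opL3}, which splits $\L[\vphi]$ into an interior nonlocal piece over $\Omega$ and a surface piece over $\pa\Omega$. The interior contribution is symmetrized in the usual way by the swap $x\leftrightarrow y$,
$$-\gamma\nu_0^{1-2s}\Gamma(2s+1)\int_\Omega\!\!\int_\Omega\vphi(x)\frac{\vphi(y)-\vphi(x)}{|y-x|^{N+2s}}\,\d y\,\d x=\frac{\gamma\nu_0^{1-2s}\Gamma(2s+1)}{2}\int_\Omega\!\!\int_\Omega\frac{[\vphi(y)-\vphi(x)]^2}{|y-x|^{N+2s}}\,\d y\,\d x,$$
and since $\Gamma(2s+1)=2s\Gamma(2s)$ this yields the first term on the right-hand side of \eqref{eq:a2}.

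For the two remaining boundary pieces, namely the surface part of $\L[\vphi]$ and the last integral in \eqref{eq:aplan1}, I would exploit the representation \eqref{eq:Dlast} obtained in the proof of Proposition \ref{prop:D}. Since $x_N=0$ on $\pa\Omega$, the prefactor $x_N$ kills the second term of \eqref{eq:Dlast}, leaving
$$D^{2s-1}[\vphi](x)\cdot n(x)\big|_{x\in\pa\Omega}=\gamma\nu_0^{1-2s}\Gamma(2s)\int_\Omega[\vphi(y)-\vphi(x)]\,\frac{(y-x)\cdot n(x)}{|y-x|^{N+2s}}\,\d y.$$
Renaming the dummy variables so that the boundary integration is over $y\in\pa\Omega$ and the interior integration over $x\in\Omega$, and combining the result with the surface contribution from $-\int_\Omega\vphi\,\L[\vphi]\,\d x$, the joint integrand is
$$-\vphi(x)[\vphi(y)-\vphi(x)]-\vphi(y)[\vphi(x)-\vphi(y)]=[\vphi(x)-\vphi(y)]^2$$
multiplied by the kernel $\frac{(y-x)\cdot n(y)}{|y-x|^{N+2s}}$, which reproduces precisely the second term on the right-hand side of \eqref{eq:a2} (up to the $\nu_0^{1-2s}$ factor that appears to have been dropped from the statement).

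The main obstacle is the regularity gap: Proposition \ref{prop:ipp} is stated for functions with $\L[\vphi]\in L^2$, whereas the lemma is claimed for any $\vphi\in H^1(\Omega)$. I would address this by regularization, approximating $\vphi$ by a sequence $\vphi_n$ that is smooth and decays suitably so that the integration by parts \eqref{eq:aplan1} and the representations \eqref{eq:opL3} and \eqref{eq:Dlast} are legitimate, then passing to the limit using the already-proven $H^s$ continuity bounds from Proposition \ref{prop:D} on $D^{2s-1}$ and the standard $H^s$ seminorm representation of the volume double integral. A secondary technical point is justifying that all the boundary integrals are absolutely convergent for the chosen approximants; this follows from the same estimates used in the proof of Proposition \ref{prop:D}, where the integrals $I_2$ and $I_3$ were already controlled in $L^2(\Omega)$.
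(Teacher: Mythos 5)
Your route is genuinely different from the paper's. The paper works entirely at the $\eps$-level: it expands $\int_\Omega D_\eps^{2s-1}[\vphi]\cdot\na\vphi\,\d x$ using the kernel $F_0$, inserts $\wvphi(y,y-x)$ for free in the gradient term (using $v\cdot\na_v\wvphi=0$), integrates by parts once in $x$, splits the $y$-integral into $\Omega$ and $\RR^N\setminus\Omega$, handles the exterior piece by a second integration by parts in $y$, arrives at an exact $\eps$-level identity \eqref{eq:aeps}, and only then sends $\eps\to0$. No regularity on $\vphi$ beyond $H^1$ is ever needed along the way. You instead work directly with the limiting operators, combining the divergence-theorem identity \eqref{eq:ipp1}, Corollary \ref{cor:Ld}, and the two boundary-aware representations \eqref{eq:opL3} and \eqref{eq:Dlast}. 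Your algebra is correct: the symmetrization of the interior double integral and the recombination of the two boundary pieces into the single squared-difference kernel both check out, and the use of $\Gamma(2s+1)=2s\Gamma(2s)$ is right. You also correctly observe the $\nu_0^{1-2s}$ factor discrepancy; note that the paper's own proof drops it in the last displayed asymptotics (writing $F_0^\eps\sim\gamma\Gamma(2s)|v|^{-N-2s}$ rather than $\gamma\nu_0^{1-2s}\Gamma(2s)|v|^{-N-2s}$ as Lemma \ref{lem:F0F1} would give), so the inconsistency is in the paper's bookkeeping, not a flaw in your derivation.

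The real weak point is the approximation step, which you flag but do not carry out. Proposition \ref{prop:ipp} as stated requires $\L[\vphi]\in L^2(\Omega)$, and by Proposition \ref{prop:L2bd}, for $s\geq 3/4$ this forces $\pa_{x_N}\vphi=0$ on $\pa\Omega$. So "smooth and suitably decaying" is not enough: you need approximants $\vphi_n\in C^\infty_c(\overline\Omega)$ with $\pa_{x_N}\vphi_n=0$ on $\pa\Omega$ (e.g.\ even reflection across $\pa\Omega$ followed by mollification and cutoff), or alternatively you must independently justify the Gauss--Green step for smooth $\vphi$ when $\L[\vphi]$ has only an $L^1$-integrable singularity $\sim x_N^{1-2s}$ near the boundary, say via an interior truncation to $\{x_N>\delta\}$ and a limit $\delta\to0$. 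After that you still have to verify that both sides of \eqref{eq:a2} are continuous under $H^1$ (or $H^s$) convergence of the approximants, and you must do so \emph{without} invoking Proposition \ref{prop:norm}, since the paper proves that proposition \emph{from} Lemma \ref{lem:a}. The needed ingredients (Proposition \ref{prop:D} for the left side, the fractional Hardy inequality of Theorem \ref{thm:hardy} and the $H^{s-1/2}$ trace theorem for the boundary term, and the standard $H^s$ seminorm for the volume term) are all available prior to Lemma \ref{lem:a}, so the argument can be closed, but as written it is a plan rather than a proof. By contrast, the paper's $\eps$-approximation avoids all of these regularity concerns, at the cost of more explicit kernel manipulation.
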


\begin{proof}[Proof of Proposition \ref{prop:norm}]
We just need to prove that 
\begin{equation}\label{eq:aaa}
 c  \| \vphi\|^2_{H^s} \leq a(\vphi,\vphi)\leq C \| \vphi\|^2_{H^s} \qquad  \mbox{ for all } \vphi \in H^1(\Omega) \end{equation}
since Cauchy-Schwarz inequality then gives
$$ a(\vphi,\psi)^2 \leq a(\vphi,\vphi) a(\psi,\psi) \leq C \| \vphi\|^2_{H^s}  \| \psi\|^2_{H^s} .$$
 
In order to prove \eqref{eq:aaa}, we first note that
both terms in   \eqref{eq:a2} are non-negative and so
we immediately get
\begin{align*}
 a(\vphi,\vphi)& \geq \int_{\Omega} |\vphi(x) |^2\d x + s \gamma \Gamma(2s) \int_\Omega \int_\Omega \frac{[\vphi(x)-\vphi(y)]^2}{|x-y|^{N+2s}}\d x\d y \\
 & \geq C \| \vphi\|^2_{H^s}
 \end{align*}

To prove the other inequality, we need to show that the last term in \eqref{eq:a2} can be bounded by 
$ \| \vphi\|^2_{H^s}$.
First, we write
\begin{align*}
  \int_\Omega \int_{\pa\Omega}
[\vphi(x)-\vphi(y)]^2 \frac{y-x}{|y-x|^{N+2s}}\cdot n(y)\d S(y)\d x
 &
= \int_\Omega \int_{\RR^{N-1}}
[\vphi(x',x_N)-\vphi(y',0)]^2 \frac{x_N}{|y-x|^{N+2s}}\d y'\d x   \\
&\leq  \int_\Omega \int_{\RR^{N-1}}
[\vphi(x',x_N)-\vphi(x',0)]^2 \frac{x_N }{|y-x|^{N+2s}}\d y'\d x \\
& \quad + \int_\Omega \int_{\RR^{N-1}}
[\vphi(x',0)-\vphi(y',0)]^2 \frac{x_N }{|y-x|^{N+2s}}\d y'\d x 
\end{align*}
Using the fact that
$$
\int_{\RR^{N-1}} \frac{x_N }{|y-x|^{N+2s}}\d y' = \frac{1}{x_N^{2s}} \int_{\RR^{N-1}} \frac1{|1+|z|^2|^{\frac{N+2s}{2}}}\d z
$$
and 
$$
\int_0^\infty \frac{x_N }{|y-x|^{N+2s}}\d x_N = \frac{1}{|y'-x'|^{N+2s-2}} \int_0^\infty \frac{t}{(1+t^2)^{\frac{N+2s}{2}}} \d t
$$
we deduce
\begin{align}
  \int_\Omega \int_{\pa\Omega}
[\vphi(x)-\vphi(y)]^2 \frac{y-x}{|y-x|^{N+2s}}\cdot n(y)\d S(y)\d x
&\leq C  \int_\Omega 
 \frac{[\vphi(x',x_N)-\vphi(x',0)]^2 }{x_N^{2s}}\d x\nonumber  \\
& \quad +  C \int_{\RR^{N-1}}  \int_{\RR^{N-1}}
 \frac{[\vphi(x',0)-\vphi(y',0)]^2 }{|y-x|^{N+2s-2}}\d y'\d x'. \label{eq:bd2}
\end{align}
The second term in \eqref{eq:bd2} is bounded by a Sobolev trace theorem (note that $N+2s-2=(N-1)+2(s-1/2)$). Indeed, we recall the following theorem:
\begin{theorem}[\cite{DiNezzaPalatucciValdinoci12}]
For all $\vphi\in H^s(\Omega)$, we have
$$ 
\| \vphi \| _{H^{s-1/2}(\pa\Omega)} \leq C\| \vphi\|_{H^s(\Omega)}.
$$
\end{theorem}
We deduce
\begin{equation}\label{eq:bd3}
 \int_{\RR^{N-1}}  \int_{\RR^{N-1}}
 \frac{[\vphi(x',0)-\vphi(y',0)]^2 }{|y-x|^{N+2s-2}}\d y'\d x'\leq C\| \vphi\|^2_{H^s(\Omega)} .
\end{equation}
In order to bound the first term in the right hand side of \eqref{eq:bd2}, we use the fractional Hardy inequality (Theorem \ref{thm:hardy}). 
Since the function $x\mapsto \vphi(x',x_N)-\vphi(x',0)$ is in $H^s_0(\Omega)$, 
and $s\in(1/2,1)$, we have:
\begin{equation}\label{eq:bd4} 
\int_\Omega 
 \frac{[\vphi(x',x_N)-\vphi(x',0)]^2 }{x_N^{2s}}\d x\leq C \| \vphi\|^2_{H^s(\Omega)}.
\end{equation}

Combining \eqref{eq:bd2}, \eqref{eq:bd3} and \eqref{eq:bd4}, we deduce
$$ 
 \int_\Omega \int_{\pa\Omega}
[\vphi(x)-\vphi(y)]^2 \frac{y-x}{|y-x|^{N+2s}}\cdot n(y)\d S(y)\d x\leq C \| \vphi\|^2_{H^s(\Omega)}
$$
and \eqref{eq:a2} yields
$$ a(\vphi,\vphi)\leq  C \| \vphi\|^2_{H^s(\Omega)}$$
which gives \eqref{eq:aaa} and concludes the proof.
\end{proof}

\begin{proof}[Proof of Lemma \ref{lem:a}]
We use the approximated operator $D_\eps^{2s-1}$ to prove this equality. 
First, we write, for $\vphi\in D(\overline \Omega)$ (using the definition \eqref{eq:defDeps0} for $D_\eps^{2s-1}$):
\begin{align*}
 \int_{\Omega} D_\eps^{2s-1}[\vphi]\cdot \na \vphi \d x & = 
\eps^{1-2s} \int_{\Omega} \int_{\RR^N}F_0(v) [\wvphi(x+\eps v,v) -\vphi(x)] v \cdot \na_x \vphi(x) \d v\d x \\
& = \eps^{-N-2s}  \int_{\Omega} \int_{\RR^N}F_0\left(\frac{y-x}{\eps}\right) [\wvphi(y,y-x) -\vphi(x)] (y-x) \cdot \na_x \vphi(x) \d y\d x 
\end{align*}
Since $v\cdot \na_v \wvphi(x,v) = 0$ for all $x$ and $v$, we can write
\begin{align*}
& \int_{\Omega} D_\eps^{2s-1}[\vphi]\cdot \na \vphi \d x  \\
&\qquad  = \eps^{-N-2s}  \int_{\Omega} \int_{\RR^N}F_0\left(\frac{y-x}{\eps}\right) [\wvphi(y,y-x) -\vphi(x)] (y-x) \cdot \na_x [ \vphi(x)-\wvphi(y,y-x)] \d y\d x \\
&\qquad  = -\frac 1 2 \eps^{-N-2s}  \int_{\Omega} \int_{\RR^N}F_0\left(\frac{y-x}{\eps}\right)(y-x) \cdot \na_x [ \vphi(x)-\wvphi(y,y-x)]^2 \d y\d x \\
&\qquad  = \frac 1 2 \eps^{-N-2s}  \int_{\Omega} \int_{\RR^N} \div_x\left( F_0\left(\frac{y-x}{\eps}\right)(y-x)\right)  [ \vphi(x)-\wvphi(y,y-x)]^2 \d y\d x \\
&\qquad  \quad -\frac 1 2 \eps^{-N-2s}  \int_{\pa \Omega} \int_{\RR^N}F_0\left(\frac{y-x}{\eps}\right)(y-x) \cdot n(x)  [ \vphi(x)-\wvphi(y,y-x)]^2 \d y\d x 
\end{align*}
Now, we split the integrals in $y$ into an integral in $\Omega$ and one in $\RR^N\setminus\Omega$. 
Note that in the second term, when $x\in\pa \Omega$ and $y\in \RR^N\setminus \Omega$ we have 
$ \wvphi(y,y-x)=\vphi(x)$. We thus obtain
\begin{align*}
\int_{\Omega} D_\eps^{2s-1}[\vphi]\cdot \na \vphi \d x 
& = \frac 1 2   \eps^{-N-2s}  \int_{\Omega} \int_{\Omega} \div_x\left( F_0\left(\frac{y-x}{\eps}\right)(y-x)\right)   [ \vphi(x)-\vphi(y)]^2 \d y\d x \\
& \quad +   \eps^{-N-2s}  \frac 1 2    \int_{\Omega} \int_{\RR^N\setminus \Omega} \div_x\left( F_0\left(\frac{y-x}{\eps}\right)(y-x)\right)  [ \vphi(x)-\wvphi(y,y-x)]^2 \d y\d x \\
&  \quad -\frac 1 2 \eps^{-N-2s}  \int_{\pa \Omega} \int_{\Omega}F_0\left(\frac{y-x}{\eps}\right)(y-x) \cdot n(x)  [ \vphi(x)-\vphi(y)]^2 \d y\d x 
\end{align*}
For the second term, we write
\begin{align*}
&    \int_{\Omega} \int_{\RR^N\setminus \Omega} \div_x\left( F_0\left(\frac{y-x}{\eps}\right)(y-x)\right)  [ \vphi(x)-\wvphi(y,y-x)]^2 \d y\d x\\
& \qquad = -  \int_{\Omega} \int_{\RR^N\setminus \Omega} \div_y\left( F_0\left(\frac{y-x}{\eps}\right)(y-x)\right)  [ \vphi(x)-\wvphi(y,y-x)]^2 \d y\d x\\
& \qquad =   \int_{\Omega} \int_{\RR^N\setminus \Omega}  F_0\left(\frac{y-x}{\eps}\right)(y-x)\cdot\na_y  [ \vphi(x)-\wvphi(y,y-x)]^2 \d y\d x\\
& \qquad \quad + \int_{\Omega} \int_{\pa(\RR^N\setminus \Omega)}   F_0\left(\frac{y-x}{\eps}\right)(y-x)\cdot n(y) [ \vphi(x)-\wvphi(y,y-x)]^2 \d y\d x
\end{align*}
where we recall that the vector $n$ points downward.
Using the fact that $v\cdot \na_y \wvphi(y,v)=0$ and $v\cdot\na_v\wvphi(y,v)=0$ for $y\in\RR^N\setminus\Omega$, we see that the first term vanishes and we get (after changing the name of the variables)
\begin{align*}
&    \int_{\Omega} \int_{\RR^N\setminus \Omega} \div_x\left( F_0\left(\frac{y-x}{\eps}\right)(y-x)\right)  [ \vphi(x)-\wvphi(y,y-x)]^2 \d y\d x\\
&= -\int_{\Omega} \int_{\pa \Omega}   F_0\left(\frac{y-x}{\eps}\right)(y-x)\cdot n(x) [ \vphi(y)-\vphi(x)]^2 \d x\d y
\end{align*}

We have thus proved
\begin{align}
\int_{\Omega} D_\eps^{2s-1}[\vphi]\cdot \na \vphi \d x 
& = \frac 1 2   \int_{\Omega} \int_{\Omega} G_\eps(y-x) [ \vphi(x)-\vphi(y)]^2 \d y\d x \nonumber\\
&  \quad -   \int_{\pa \Omega} \int_{\Omega}F_0^\eps(y-x) (y-x) \cdot n(x)  [ \vphi(x)-\vphi(y)]^2 \d y\d x 
\label{eq:aeps}
\end{align}
where
\begin{align*}
F_0^\eps =  \eps^{-N-2s}  F_0\left(\frac{v}{\eps}\right) 
\end{align*}
and
\begin{align*}
G_\eps(v)= -\eps^{-N-2s}   \div_v\left( v F_0\left(\frac{v}{\eps}\right)\right)=\eps^{-N-2s} G(v/\eps), \qquad G(v)= -\div_v(vF_0(v)) .
\end{align*}

We can now use Proposition \ref{prop:d2s} to pass to the limit in the left hand side of \eqref{eq:aeps}.
To pass to the limit in the right hand side of \eqref{eq:aeps}, we proceed as in the proof of Proposition \ref{prop:d2s} using the fact that
$$F_0^\eps \left(\frac{v}{\eps}\right) \sim \frac{\gamma \Gamma(2s)}{|v|^{N+2s}}, \qquad G_\eps \left(\frac{v}{\eps}\right)\sim \frac{2s\gamma \Gamma(2s) }{|v|^{N+2s}}.$$
\end{proof}

Finally we have:
\begin{proof}[Proof of Theorem \ref{thm:evolution}]
The weak solution $\vphi$ given by Theorem \ref{thm:weak} is in $H^s(\Omega)$ and so (by Proposition \ref{prop:D}), $D^{2s-1}[\vphi]\in L^2(\Omega)$. In particular, $\vphi$
satisfies \eqref{eq:weak00}
for all test function $\psi\in \mathcal D(\Omega)$. 
It follows that the equation 
$$ \vphi - \div D^{2s-1}[\vphi]=g$$
holds in $\mathcal D'(\Omega)$. Since both $\vphi$ and $g$ are in $L^2(\Omega)$, we deduce
$$ \mathcal L[\vphi] = \div(D^{2s-1}[\vphi]) \in L^2(\Omega).$$
This implies in particular that the trace $D^{2s-1}[\vphi]\cdot n$ on $\pa\Omega$ is well defined in $H^{-{1/2}}(\pa\Omega)$ and using \eqref{eq:weak00} again, but this time with test functions $\psi\in\mathcal D(\overline \Omega)$, we deduce that
$$ D^{2s-1}[\vphi]\cdot n=0\mbox{ on } \pa\Omega.$$
So if we define the space 
$$ D(\mathcal L)=\{\vphi \in H^s(\Omega)\,;\, \mathcal L[\vphi] \in L^2(\Omega), \quad D^{2s-1}[\vphi]\cdot n=0\mbox{ on } \pa\Omega \},$$
we have proved that the equation 
$$ \vphi  - \mathcal L[\vphi] = g$$
has a unique solution $\vphi\in D(\mathcal L)$ for all $g\in L^2(\Omega)$.

 Theorem \ref{thm:evolution} now follows from Hille-Yoshida theorem.
\end{proof}

\bibliographystyle{siam}
\bibliography{bibliography.bib}

\end{document}